\newcommand\R{\mathbb R}
\newcommand\T{\mathbb T}
\newcommand\N{\mathbb N}
\newcommand\E{\mathbb E}
\newcommand\U{\mathbb U}
\newcommand\M{\mathbb M}
\newcommand\Z{\mathbb Z}
\newcommand\Q{\mathcal Q}
\newcommand\A{\mathcal A}
\newcommand\p{\mathbb P}
\newcommand\W{\mathcal W}
\newcommand\F{\mathcal F}
\newcommand\LL{\mathcal L}
\newcommand{\D}{\mathcal D}
\theoremstyle{plain}
\numberwithin{equation}{section}
\newtheorem{Theorem}{Theorem}[section]
\newtheorem{Proposition}{Proposition}[section]
\newtheorem{Lemma}{Lemma}[section]
\newtheorem{Definition}{Definition}[section]
\newtheorem{Assumption}{Assumption}
\newenvironment{ManualHypo}[1]{%
\Assumption
}{\endAssumption}
\newtheorem{MResult}{Main Result}
\theoremstyle{definition}
\newtheorem{Remark}{Remark}[section]
\def\p{\mathbb P}
\def\P{\mathbb P}
\def\nn{\nabla}
\def\pp{\partial}
\def\B{\mathbf B}
\def\E{\mathbb E}
\def\BB{\mathcal B}
\def\W{\mathcal W}
\def\S{\mathcal S}
\def\SS{\mathbf S}
\def\Wlip{W^{1,\infty}}
\def\hh{\widehat}
\def\ol{\overline}
\def\I{{\bf I}}
\def\A{\mathcal A}
\def\D{\mathcal D}
\def\EE{\mathcal E} 
\def\G{\mathcal G}
\def\J{\mathcal J}
\def\K{\mathcal K}
\def\Q{\mathcal Q}
\def\Y{\mathcal Y}
\def\OP{{\rm OP}}
\def\I{{\bf I}}
\def\TT{\mathcal T}
\def\T{\mathbb T}
\def\R{\mathbb R} 
\def\N{\mathbb N} 
\def\Z{\mathbb Z} 
\def\kk{\kappa}
\def\d{{\,\rm{d}}} 
\def\LL{\mathcal L}
\def\si{\sigma}
\def\<{\langle} 
\def\>{\rangle}
\def\DD{\Delta}
\def\H{\mathbb H}
\def\U{\mathbb U}
\def\M{\mathbb M}
\def\F{\mathcal F}
\def\tt{\widetilde}
\def\be{\beta}
\title{{\bf A General Framework for Solving Singular SPDEs with  Applications to Fluid Models Driven by  Pseudo-differential Noise}\footnote{Feng-Yu Wang is supported 
by   the National Key R\&D Program of China (No. 2022YFA1006000, 2020YFA0712900), NNSFC (No.  11921001) and by the Deutsche Forschungsgemeinschaft (DFG, German Research Foundation) -- SFB 1283/2 2021 -- 317210226.} }
\author{{ \textbf{Hao Tang}$^{2)}$ and \textbf{Feng-Yu Wang}$^{1)}$ }\\
\footnotesize{\it $^{1)}$Center for Applied Mathematics, Tianjin University, Tianjin 300072, China}\\
\footnotesize{\it $^{2)}$ Department of Mathematics,
University of Oslo, P.O. Box 1053, Blindern, N-0316 Oslo, Norway}\\
\footnotesize{ haot@math.uio.no; wangfy@tju.edu.cn}}
\begin{document}

\begin{CJK}{UTF8}{gbsn}

\maketitle

\begin{abstract} 
In this paper we focus on nonlinear SPDEs with  singularities included in both drift and noise coefficients, for which the Gelfand-triple  argument  developed for (local) monotone SPDEs turns out to be invalid.   We propose a general framework of \emph{proper regularization} to solve such singular SPDEs.  As applications, the (local and global) existence is presented for a broad class of fluid models driven by pseudo-differential noise of arbitrary order, which include the stochastic magnetohydrodynamics (hence Navier-Stokes/Euler) equations, stochastic Camassa-Holm type equations, stochastic aggregation-diffusion equation and stochastic surface quasi-geostrophic equation. Thus, some recent results derived in the literature are considerably extended in a unified way.  
\end{abstract} 
\medskip
\textbf{2020 AMS subject Classification:} Primary: 60H15, 35Q35; Secondary: 35A01, 35S10. 
\newline
\textbf{Keywords:} Singular SPDEs, proper regularization, pseudo-differential noise, 
blow-up/non-explosion criterion.


\tableofcontents 

\section{Introduction}

The study of stochastic partial differential equations (SPDEs), in particular for singular nonlinear models arising from fluid mechanics, is very rich and active. There exist a huge number of references in the literature, see, for instance, \cite{Mikulevicius-Rozovskii-2004-SIAM,Flandoli-Luo-2021-PTRF,
Li-Liu-Tang-2021-SPA,Rohde-Tang-2021-JDDE,Alonso-Bethencourt-2020-JLNS,Crisan-Flandoli-Holm-2018-JNS} and monographs \cite{Flandoli-2008-SPDE-book, Flandoli-2011-book,Kuksin-Shirikyan-2012-book,Breit-Feireisl-Hofmanova-2018-Book}. 
In this paper, we intend to build up a general framework for nonlinear SPDEs with singularity in both drift and noise, such that a large class of models  can be solved in a unified way. 

\subsection{Singular evolution equation in Hilbert space}

We will study a stochastic system on a separable Hilbert space $\H$, with coefficients taking values in a larger Hilbert space $\M$ such that
\begin{equation*}
\H\hookrightarrow \M, 
\end{equation*}
that is,  $\H$ is densely embedded into $\M$ with  $\|\cdot\|_\M\lesssim \|\cdot\|_\H$. Here  and in the sequel,  for two nonnegative variables $A$ and $B$, $A\lesssim B$ means that there exists a constant $c>0$ such that $A\le c\,B$. 
The level of singularity  is described by how larger  $\M$ is than $\H$.  

The system will be  driven by  the cylindrical Brownian motion  $\W$ on  another separable Hilbert space $\U$:  
$$\W(t):=\sum_{k\ge 1} W_k(t) e_k,\ \ t\ge0, $$  
where   $\{e_k\}_{k\ge 1}$ is a   complete orthonormal basis of $\U$, and 
$\{W_k\}_{k\ge 1}$ is a sequence of independent 1-D Brownian motions on a right-continuous complete filtration probability space $(\Omega,\{\F_t\}_{t\ge 0}, \P).$  Let   $\LL_2(\U; \H)$ be  the space of Hilbert-Schmidt operators from $\U$ to $\H$.

We now consider the following stochastic equation  for unknown process $X=(X(t))_{t\ge 0}$ on  $\H$: 
\begin{equation}
\d X(t)=\big\{b(t,X(t))+g(t,X(t))\big\}\d t+h(t,X(t))\d\W(t), \ \ t\ge 0, \label{E1}
\end{equation}
where  
\begin{equation}\label{b g h}
b:[0,\infty)\times \H\rightarrow\H,\ \ \ g:[0,\infty)\times \H\rightarrow\M,\ \ \ h:[0,\infty)\times \H\rightarrow \LL_2(\U;\M)
\end{equation} 
are   measurable. In applications,   $b$ refers to the regular part of drift, while the drift term $g $ and the noise term $h(t,X(t))\d \W(t)$ are singular in the sense that they   take values in $\M$, which is larger than the state space $\H$.

\begin{Definition}\label{solution definition} Let $\tau$ be a stopping time satisfying $\P(\tau>0)=1,$ and let $(X,\tau):=(X_t)_{t\in [0,\tau)}$ be a progressively measurable process on $\H$. 
\begin{enumerate}[label={{\rm (\arabic*)}}]
\item $($Local\ solution$)$. We call $(X,\tau)$ a local solution to \eqref{E1}, if $\p$-{\rm a.s.} we have $t\mapsto X(t)$ is continuous in $\M$ $($hence weakly continuous in $\H$$)$,
$$\sup_{s\in[0,t]}\|X(s)\|_{\H}<\infty,\ t\in[0,\tau),$$ and the following equation holds on the space $\M$:
\begin{align}
&X(t)-X(0) \notag\\
=\,&\int_0^{t} \big\{ b(s, X(s))+ g(s,X(s)) \big\} \d s+\int_0^{t}h(s,X(s) ) \d \W(s),\ \ t\in[0,\tau).\label{define solution} \end{align} 
\item $($Maximal\ solution$)$. 
A local solution $(X,\tau)$ is called maximal,
if 
\begin{equation*}
\limsup_{t\uparrow\tau}\|X(t)\|_{\H}=\infty\ \ {\rm a.s.\ on}\ \ \{\tau<\infty\}.
\end{equation*}
Particularly, if $\P(\tau=\infty)=1$, the solution is called global or non-explosive. 
\end{enumerate} 

\end{Definition}

The first main result  in this paper  presents the following properties for solutions to \eqref{E1} with  coefficients given in \eqref{b g h}. 

\begin{MResult}[see Theorem \ref{T1} below] The existence, uniqueness,    continuity in $\H$, blow-up  and non-explosion criterion. \end{MResult}

\subsection{SPDE with pseudo-differential noise}
To see that  \eqref{E1} includes a large class of singular SPDEs as special situations, we introduce below a general type SPDE on $\mathbb K^d$ for some $d\in \N$, where 
$\mathbb K=\R$ or $\mathbb K= \T:= \R/2\pi\mathbb Z$.  
We refer to Section \ref{Section:Notations} for the precise definitions of related notions. 

Let  $\{W_k,\tt W_k\}_{k\ge 1}$ be  independent 1-D Brownian motions,  and  let $\circ \d W_k(t)$ be the Stratonovich stochastic differential. 
Let $\mathscr{F}$ be the Fourier transform on $\mathbb K$ and ${\rm i}=\sqrt{-1}$ be the imaginary unit.
For $s>0$ and $m\in\mathbb N$,  we denote by $H^s(\mathbb K^d;\R^m)$ the standard Sobolev spaces of order $s$ for $\R^m$-valued functions on $\mathbb K^d$.
Let  $\OP\SS^s$ be the class of pseudo-differential operators with symbols in $\SS^s$, and let $\OP\SS_0^s$ be subset of $\OP\SS^s$ with symbols independent of $x$ (see \eqref{Ss Real} and \eqref{OPS Real}).   

Let $\tt {\mathbb K}:= \R$ if $\mathbb K=\R$, and  $\tt {\mathbb K}:= \Z$ for $\mathbb K=\T$.  
Assume that $\tt\Pi: L^2(\mathbb K^d; \R^m)\to L^2(\mathbb K^d; \R^m)$ is a projection operator satisfying 
\begin{equation}\label{tt Pi}
[\mathscr{F}(\tt\Pi f)](\xi)=\tt\pi(\xi)(\mathscr{F}f)(\xi),\ \ \big\<\tt\Pi f,g\big\>_{L^2}=\<f,\tt\Pi g\>_{L^2},\ \ 
\big\|\tt\Pi f\big\|_{H^s}\leq \|f\|_{H^s}
\end{equation} for all $ s\ge0,\ \xi\in \tt{\mathbb K}^d, f,g\in L^2,$ and 
some measurable   $\tt\pi: \tt{\mathbb K}^d\to \mathbb C^{m\times m}$ such that 
\begin{equation}\label{tt Pi pi0}
\tt \pi(-\xi)=\overline{\tt\pi(\xi)}:= {\rm Re} [\tt\pi(\xi)]- {\rm Im} [\tt\pi(\xi)]\,{\rm i},\ \  \ \xi\in\tt{\mathbb K}^d.
\end{equation}
Typical examples of $\tt\Pi$ in the theory of PDEs modeling fluid dynamics include
\begin{equation*}
\tt\Pi=
\begin{cases}
\I:\ \text{identity mapping},\\
\Pi_d:\ \text{Leray projection on}\ \mathbb K^d,\  \text{see}\ \eqref{Pi-d define},\\
\Pi_0:\ \text{zero-average projection on}\ \T^d,\  \text{see}\ \eqref{Pi-0 define}.
\end{cases}
\end{equation*}
Particularly, if $\tt \Pi=\Pi_d$, we assume $m=d$ and we will consider the equations in
$H^s_{{\rm div}}(\mathbb K^d;\R^d)$ (see \eqref{H-div}).

Consider the following nonlinear PDE in $\H:=\tt\Pi H^s(\mathbb K^d;\R^m)$:
\begin{equation}\label{PDE}
\frac{\d}{\d t} X(t)=\tt\Pi \EE X(t)+\tt b(X(t))+\tt g(X(t)), \ \ t\ge0,
\end{equation}
where  for some constants $p_0,q_0>0$, $$\OP\SS_0^{2p_0}\ni \EE:H^s(\mathbb K^d;\R^m)\to H^{s-2p_0}(\mathbb K^d;\R^m)$$ is a  negative semi-definite operator  (see Section \ref{Section:Notations} and \ref{Assum-E}  below for more details),  $$\tt b:\tt\Pi H^s(\mathbb K^d;\R^m)\to\tt\Pi H^s(\mathbb K^d;\R^m)$$ is the regular part, and 
$$\tt g:\tt\Pi H^s(\mathbb K^d;\R^m)\to\tt\Pi H^{s-q_0}(\mathbb K^d;\R^m)$$ is the singular part losing regularities of order $q_0$ (see \ref{Assum-hbg} for the statement).
The parameters $s,q_0,p_0$ are to be determined in different examples (see Sections \ref{Section:Results on moels} and \ref{Section:Applications} for concrete $(\EE,\tt b,\tt g)$ and $(s,q_0,p_0)$).

In the existing literature, SPDEs with transport noise has been intensively investigated in recent years, where the transport noise
is given by
\begin{equation}\label{known transport noise}
\Big(c_kX+\Big(\sum_{i=1}^d d_{k,i} \pp_i\Big)X \Big) \circ \d W_k(t),\ \  \pp_i:=\frac{\pp}{\pp x_i},
\end{equation}
where $c_k,\ d_{k,i}$ are nice $\R^{m\times m}$-valued and $\R$-valued functions, respectively, 
see for examples
\cite{Flandoli-Gubinelli-Priola-2010-Invention,Flandoli-Galeati-Luo-2021-CPDE,Crisan-Holm-2018-PhyD,Crisan-Flandoli-Holm-2018-JNS,Alonso-etal-2019-NODEA,Holden-Karlsen-Pang-2021-JDE,Flandoli-Luo-2021-PTRF,Albeverio-etal-2021-JDE,Alonso-Bethencourt-2020-JLNS,Alonso-Rohde-Tang-2021-JLNS,Goodair-Crisan-Lang-2022-arXiv}.  However, as far as we know, there is no result for the case that $\pp_i$ in \eqref{known transport noise} is replaced by more general differential operators, for which   the system is allowed to have non-local  higher order singular noise coefficients.

Therefore, the second main result in this paper is  to study \eqref{PDE} with pseudo-differential noise.  We will consider the following SPDE for unknown process $X(t)$ on $\H=\tt\Pi H^s(\mathbb K^d;\R^m)$:
\begin{align}
\d X(t)=\,& \Big\{(\tt\Pi\EE X)(t)+\tt b(X(t)) +\tt g(X(t)) \Big\}\d t\nonumber\\
&+\sum_{k=1}^\infty \Big\{ (\tt\Pi \A_kX)(t) \circ \d W_k(t)
+\tt\Pi \tt h_k (t,X(t))\d \tt W_k(t)\Big\},\ t\ge0,\label{EN}
\end{align} 
where $\{\A_k\}_{k\ge1}\subset \OP\SS^{r_0}$ is a sequence of pseudo-differential operators not far away from
anti-symmetric (see \ref{Ak-assum} for the precise assumptions) and $\tt h_k(t,\cdot): H^s(\mathbb K^d;\R^m)\to H^s(\mathbb K^d;\R^m)$ ($k\ge 1,\ t\ge 0$) are locally Lipschitz continuous (we refer to \ref{Assum-h} for the details).

Then the second main result in this paper focuses on

\begin{MResult}[see Theorem \ref{T3.1} below] Well-posedness of \eqref{EN}, including existence, uniqueness, time-continuity, blow-up criterion and global existence.
\end{MResult}

\subsection{Comparison, motivation and  remarks}

 To begin with, we give some comments on  Definition \ref{solution definition}.  Due to the singularities of $g$ and $h$, the corresponding integrals in \eqref{define solution} are only defined in the larger space $\M$, but their sum has to take value in the state space $\H$ since $X(t)$ is a process on $\H$. In applications, the singular coefficients $g$ and $h$ may take values in $\M_1$ and $\LL_2(\U;\M_2)$ for some different Hilbert spaces $\M_1$ and $\M_2$.  In this case, we take a larger Hilbert space $\M$ with $\M_i \hookrightarrow\M$ $(i=1,2).$


\subsubsection{Comparing Main Result \textbf{(I)} with existing results}

\begin{enumerate}[label={ $\bf (\arabic*)$}] \setlength\itemsep{-0.1em}
\item  For the present model with singular noise coefficient, the Gelfand-triple  argument   developed for (local) monotone SPDEs turns out to be invalid.  To see this, we consider the triplet of embedded Hilbert Spaces  $\mathbb{V}\hookrightarrow\H\hookrightarrow\M$ such that $\mathbb{V}\hookrightarrow\H$ is dense and $\langle X,Y\rangle_{\M\times\mathbb{V}}=\<X,Y\>_{\H}$. Then we consider the  following  SDE: $$\Psi(t)=\Psi(0)+\int_{0}^{t}A \d t'+\int_{0}^{t}B \d \W(t'),\ \ A\in \M,\ \ B\in\LL_2(U;\H),\ \ \Psi(0)\in \H.$$
In order to apply It\^o's formula to the above SDE, the condition that $\Psi(t)\in \mathbb{V}$ (see \cite{Prevot-Rockner-2007-book,Wang-2013-Book,Goodair-Crisan-Lang-2022-arXiv}) is  \textit{necessary} because the dual product $\langle A,\Psi\rangle_{\M\times\mathbb{V}}$ needs to be well-defined. But this means that $\Psi(t)\in \mathbb{V} \hookrightarrow \H\ni \Psi(0)$, that is, $\Psi(t)$ is more regular than its initial data $\Psi(0),$ which is \textit{inconsistent} for ideal fluid motion without viscosity. For example, for the following inviscid Burgers' equation with the choice $\H:=H^s$, $\mathbb{V}:=H^{s+1}$, $\M:=H^{s-1}$ and $s>3/2$ (see Sections \ref{Section:Results on moels} and \ref{Section:Notations} for more notations)
$$\frac{\d}{\d t}X+X\partial X=0,\ \ X(0)\in \H,$$ one can \textit{only} know that $X\in \H$ and hence $X\partial X\in \M$. Then, neither the inner product $\<X\partial X,X\>_{\H}$ nor the dual product $\<X\partial X,X\>_{\M\times \mathbb{V}}$ make no sense.  Alternatively to the Galerkin approximation used in the monotone situation, we will propose  the \textit{proper regularization}   (see Definition \ref{Define-regularization})  to solve  \eqref{E1}.

\item  A blow-up criterion of solutions  in $\H$ is described by the $\M$-norm, see \eqref{Blow-up criterion} below. Since $\|\cdot\|_\M\lesssim \|\cdot\|_\H$, the blow up in $\|\cdot\|_\M$   is stronger than that  in  $\|\cdot\|_\H$, but they are  indeed equivalent  under the present framework. Due to the singularities, in general, a solution $X(t)$ to \eqref{E1} may be only continuous in $\M$ rather than $\H$, so a criterion on the continuity in $\H$  is provided   in Theorem \ref{T1} \ref{T1-conti}. 
The non-explosion is included in  Theorem \ref{T1} \ref{T1-global} for strong enough  noise  in the sense of condition \ref{C}, see also \cite{Brzezniak-etal-2005-PTRF,Hasminskii-1969-Book,Ren-Tang-Wang-2020-Arxiv} for  non-explosion results in similar spirits.  

\item In Theorem \ref{T1}, $X(0)$ is only assumed to be an $\F_0$-measurable $\H$-valued variable without any moment condition. In this case the
conditional expectation $\E[\cdot|\F_0]$ will be used to replace the expectation $\E$ in the construction of solutions,   see Lemma \ref{Lemma:Convergence of X-n}.
It seems that conditional expectation has been rarely used in the literature of SPDEs. Besides, it is worthwhile mentioning that our framework does \textit{not} require any compactness of the embedding $\H\hookrightarrow\M$, so that the main result  applies not only to SPDEs on compact spaces, where the compactness is needed to apply Prokhorov's Theorem and Skorokhod's Theorem, but also to SPDEs on unbounded domain like $\R^d$, as shown by Theorems \ref{T3.1} and examples in Section \ref{Section:Applications}. 

\end{enumerate}

\subsubsection{Motivation and remarks on Main Result \textbf{(II)}}

\begin{description}

\item[Motivation.] Pseudo-differential operators offer a non-local extension to classical differential operators. Exploring pseudo-differential noise can provide a  versatile framework to model intricate random phenomena involving non-local random interactions. This can be particularly useful in turbulence models, where the behavior of fluid at one point is influenced by the behavior of fluid at distant points. To gain more insight into the non-locality arising from pseudo-differential noise, which classical transport noise cannot capture, we examine the stochastic Burgers' equation as a simple yet intriguing example. Let $W(t)$ be a standard 1-D Brownian motion. We first consider the case of  classical derivative, i.e.,
\begin{equation} \label{Toy model}
\d X+ X\partial X\d t=\sqrt{2\mu}\partial X\circ \d W(t),\ \ \mu>0.
\end{equation}
By utilizing the following relation for a semi-martingale $\Theta(t)$:
\begin{equation}
\Theta(t)\circ \d W(t)= \Theta(t)  \d W(t)+ \frac{1}{2}  \left\langle \Theta, W\right\rangle (t)\ \text{with}\  \left\langle\cdot,\cdot\right\rangle\ \text{being the quadratic variation}, \label{transform into Ito}
\end{equation}
we can reformulate \eqref{Toy model} as:
 \begin{equation*}
 \d X+ X\partial X  \d t=\sqrt{2\mu}\partial X \d W(t)+\mu \partial^2 X \d t.
 \end{equation*}
 Let $\Xi(t):=\exp\big\{-\sqrt{2\mu}W(t)\partial\big \}$ be an operator-valued process. Then, in the sense of Fourier multiplier, we have the following SDE:
 $$\d \Xi (t)=-\sqrt{2\mu}\partial \Xi (t) \d W(t)+\mu \partial^2 \Xi (t) \d t.$$
 Therefore, for $Y(t):=[\Xi X](t)$,  we can derive
 \begin{align}
 \d  Y=\,&  [{\rm d}\Xi ] (X) + \Xi  ({\rm d}X) -
 2\mu \partial^2  \Xi X {\rm d} t
 =\,
 -\Xi \big(\Xi^{-1}Y \cdot
 \partial \Xi ^{-1}Y\big) \d t.\label{S-Burgers}
 \end{align}
Next, we consider the case of pseudo-differential noise, i.e., 
\begin{equation*}  
\d X+ X\partial X\d t=\sqrt{2\mu}(-\partial^2)^\alpha X\circ \d W(t),\ \ \mu>0,\ \ \alpha\in(0,1/2].
\end{equation*}
A similar argument yields
\begin{align*}
\frac{\d}{\d t}Y +\Xi_{\alpha} \big(\Xi_{\alpha} ^{-1}Y  \cdot \partial \Xi_{\alpha} ^{-1}Y\big)=0,\ \ 
Y(t):=[\Xi_{\alpha} X](t),\ \  
\Xi_{\alpha} (t):=\exp\big\{-\sqrt{2\mu}W(t)(-\partial^2)^{\alpha}\big\}.
\end{align*}
Comparing the two cases above, we observe that the kernels of $\Xi$ and $\Xi^{-1}$ are delta functions, which indicates their local-in-$x$ nature. In the case of pseudo-differential noise, the non-locality arising from $\sqrt{2\mu}(-\partial^2)^{\alpha}X\circ \d W(t)$ is characterized by the term $\Xi_{\alpha}\big(\Xi_{\alpha}^{-1}Y \cdot \partial \Xi_{\alpha}^{-1}Y\big)$ at the level of $Y$. However, it seems that the kernels of $\Xi_{\alpha}$ and $\Xi_{\alpha}^{-1}$ cannot be explicitly determined.

\item[Projection $\tt\Pi$.]  For simplicity, in \eqref{EN} we assume that $\tt b$ and $\tt g$ already take values in projected space, as they come from deterministic PDEs. However, we keep $\tt\Pi$ in  $\tt \Pi \EE, \tt \Pi \A_k$ and $\tt\Pi \tt h_k$. For example, if $\tt\Pi=\Pi_d$ and $\EE=\DD$, $\Pi_d \EE$ is known as Stokes operator. As for $\A_k$ and $\tt h_k$, the projection is also necessary to make solutions stay in the projected space.  However,  in calculations it is non-trivial to deal with the case $\tt\Pi\neq\I$ (see \ref{Ak-assum-Pi} and Lemma \ref{LAK}) since  $\tt\Pi$ may \textit{not} be a pseudo-differential operator. For instance, the   Fourier multiplier of  $\tt\Pi=\Pi_d$ is singular at $0$ (see \eqref{Pi-d define} below).

\item[Order of pesudo-differential operators.] We mainly consider the case that the operator $\A_k$ in \eqref{EN} contains two parts:  $x$-dependent part with  order $r_1\in[0,1]$ and $x$-independent part with order $r_2\ge r_1$ (see  \ref{Ak-assum}). Each of them 
extends the classical transport type noise structure \eqref{known transport noise}. In particular, $r_2$ can be arbitrary large.  Accordingly,  Theorem \ref{T3.1} enlarges many results derived in the literature by allowing highly non-local and singular    noise (see examples in Section \ref{Section:Results on moels}). We refer to \cite{Crisan-Flandoli-Holm-2018-JNS,Alonso-Bethencourt-2020-JLNS,Alonso-etal-2019-NODEA,Alonso-Rohde-Tang-2021-JLNS,Flandoli-Galeati-Luo-2021-CPDE,Goodair-Crisan-Lang-2022-arXiv,Holden-Karlsen-Pang-2022-DCDS} and the references therein for the known results with transport type noise given by \eqref{known transport noise}.

\end{description}

%
%
%

\subsection{Results on specific models}\label{Section:Results on moels}

As \eqref{EN}  covers a broad class of SPDEs with different choices of $(\mathcal E, b,g)$,  we apply   Theorem \ref{T3.1} to 
some important  models from fluid mechanics.   Since there is  an enormous literature on these equations,   we do not try to provide a complete account but  only mention a few results. 

Recall that $\mathbb K=\R$ or $\T$ and $\pp_i$ stands for the $i$-th partial derivative on $\mathbb K^d$. For a function $f=(f_i)_{1\le i\le d}:\mathbb K^d\to\R^d$, we let 
$$(f\cdot\nn):=\sum_{i=1}^d f_i\pp_{i},\ \ \ \ \nn\cdot f:= \sum_{i=1}^d \pp_i f_i.$$
In particular, when $d=1$, we simply denote  by $\pp=\pp_1$  the   derivative  in  $\mathbb K$ and let (see \eqref{Ds Lambda s define} for precise definition)
$$\Lambda:=\left(-\DD\right)^{\frac{1}{2}},\ \ \ \ \D:=\left(\I-\DD\right)^{\frac{1}{2}}.$$ Let ${\rm diag}(\cdot,\cdots,\cdot)$  be  the diagonal operator. For more notations, we refer to Section \ref{Section:Notations}.

\textbf{(1)} 
\textit{Magnetohydrodynamics} ({\bf MHD}) \textit{equation}. Let $d\ge 2$. Consider the following incompressible generalized magnetohydrodynamics equation with fractional kinematic dissipation and magnetic diffusion on $\mathbb K^d$, cf. \cite{Wu-2003-JDE},
\begin{equation}\label{MHD}
\left\{\begin{aligned}
&\frac{\d}{\d t} V(t) + \mu_1 \Lambda^{2\alpha_1} V(t) + \Pi_d(V (t)\cdot \nabla ) V(t) -\Pi_d(M(t) \cdot\nn) M(t) =0,\\
&\frac{\d}{\d t} M(t) +\mu_2 \Lambda^{2\alpha_2} M(t) + (V(t) \cdot \nabla) M(t)- (M(t)\cdot\nn) V(t) =0,\\
&\nabla\cdot V(t) = \nabla\cdot M (t)=0, 
\end{aligned}\right.
\end{equation} 
where $V: [0,\infty)\times \mathbb K^d\to \R^d$ is the velocity field, $M: [0,\infty)\times \mathbb K^d\to \R^d$ is magnetic field, $\Pi_d$ is the Leray projection, $\alpha_1,\alpha_2\in [0,1]$ are the fractional powers and $\mu_1,\mu_2\ge 0$ stand for the kinematic viscosity and magnetic diffusivity constants, respectively. 
Letting $X=(V,M)^T$, \eqref{MHD} can be reformulated as
$$\frac{\d}{\d t} X(t) =\EE^{{\rm mhd}}X(t) +g^{{\rm mhd}}(X(t))$$ with
\begin{equation} \label{MHD-E-g} 
\begin{cases}
\EE^{{\rm mhd}}:=\,-{\rm diag} \big(\mu_1 \Lambda^{2\alpha_1},\ \mu_2\Lambda^{2\alpha_2}\big),\\
g^{{\rm mhd}}(X):=\,\Big( \Pi (M\cdot\nn ) M-\Pi (V\cdot\nn )V,\ (M\cdot\nn)V-(V\cdot\nn )M\Big)^T.
\end{cases} 
\end{equation}
Obviously, when $M\equiv 0$ and $\alpha_1=1$, the equation for $V(t)$  covers  the incompressible Navier-Stokes equation ($\mu_1>0$) and the Euler equation $(\mu_1=0)$.

\textbf{(2)}
\textit{Camassa-Holm} ({\bf CH}) \textit{type equation}. The following equation  for $X: [0,\infty)\times\mathbb K\to\R$ 
\begin{equation}
\frac{\d}{\d t} (X -\pp^2X )(t)+3 X(t)  \pp X (t) = 2\pp X(t) \pp^2 X(t) +X(t) \pp^3 X(t)\label{CH}
\end{equation}
was first introduced by Fokas \& Fuchssteiner \cite{Fuchssteiner-Fokas-1981-PhyD} to study completely integrable generalizations of KdV equation with bi-Hamiltonian structure. In \cite{Camassa-Holm-1993-PRL}, Camassa \& Holm proved that \eqref{CH} can be connected to the unidirectional propagation of shallow water waves over a flat bottom ($X$ represents the free surface of water), and now \eqref{CH} is usually called the Camassa-Holm equation. In order to include some closely related equations (such as the Degasperis-Procesi equation \cite{Degasperis-Procesi-1999-chapter}, b-family equations \cite{Holm-Staley-2004-PhyA}, recently derived rotation Camassa-Holm equation \cite{Gui-Liu-Sun-2019-JMFM}, and even different drift terms in stochastic cases \cite{Albeverio-etal-2021-JDE,Crisan-Holm-2018-PhyD}), we consider the following \textbf{CH} type equations:
\begin{equation}\label{CH-b-g}
\frac{\d}{\d t} X(t) =b^{{\rm ch}}(X(t))+g^{{\rm ch}}(X(t))
\end{equation}
by taking
\begin{equation*} 
b^{{\rm ch}}(X):= -\pp\D^{-2}\Big(\sum_{i=1}^4 a_i X^i+a |\pp X|^2\Big),\ \ g^{{\rm ch}}(X):= -X\pp X,
\end{equation*} 
where $\{a,a_i\}_{1\le i\le 4}$ are constants. We refer to \cite{Tang-Yang-2022-AIHP} for the link of \eqref{CH-b-g} to the above mentioned Camassa-Holm-family equations.

\textbf{(3)} 
\textit{Aggregation-diffusion} ({\bf AD}) \textit{equation}.
Let $d\ge 2$. Consider the following aggregation-diffusion type models on $\mathbb K^d$:
\begin{align}\label{AD}
\frac{\d}{\d t} X(t) +\nu\Lambda^{2\be}X (t)+ \gamma\nn\cdot \big\{X(t) \nabla [\Phi\star X(t)]\big\}=0,\ \ t\ge 0,
\end{align}
where $X: [0,\infty)\times \mathbb K^d\to \R$ represents the density of species (cells), $\nu\ge0$ and $\gamma\in\R\setminus\{0\}$ are the diffusion and aggregation coefficients respectively, $\be\in[0,1]$ is the diffusion order, $\Phi$ is an interaction kernel and $\star$ stands for the convolution.
The equation \eqref{AD} has a range of applications arising in physics and biology with specific choices of $\Phi$, see \cite{Perthame-2006-book,Keller-Segel-1970-JTB} for self-organization of chemotactic movement, see \cite{Burger-etal-2014-SIAM} for biological swarm, and see the survey \cite{Carrillo-Craig-Yao-2018-chapter} for some other choices. In this paper
we assume that $\Phi\in H^\infty(\mathbb K^d,\R):=\cap_{s\ge0} H^s(\mathbb K^d,\R)$ such that $(\mathscr F\Phi)(\xi)\in \S_0^{-2}$, where $\S_0^{s}$ on $\mathbb K$ with $s\in\R$ is defined in Section \ref{Section:Notations} below.

A typical example of $\Phi$ is the Bessel kernel for which $\{\Phi\star\}=\D^{-2}\in\OP\SS_0^{-2}$ (see \eqref{Ds Lambda s define} and  \eqref{OPS Real}), and in this case \eqref{AD} reduces to the well-known parabolic-elliptic Keller-Segel system (see for example \cite{Corrias-Perthame-Zaag-2004-MJM}):
$$ \frac{\d}{\d t} X(t) +\nu\Lambda^{2\be}X (t)
+ \gamma\nn\cdot (X (t)\nabla Y(t))=0,\ \ Y(t)-\Delta Y(t)=X(t).$$
Then we rewrite \eqref{AD} as
\begin{equation*}
\frac{\d}{\d t}X(t) =\EE^{{\rm ad}}X (t)+g^{{\rm ad}}(X(t)),
\end{equation*}
where
\begin{equation}\label{AD-E-g} 
\ \EE^{{\rm ad}}:=-\nu(-\DD)^{\beta},\ \ g^{{\rm ad}}(X):= - \gamma\nn\cdot\left(X\BB X\right),\ \ \BB:= \nn[\Phi\star]. 
\end{equation}

\textbf{(4)}  
\textit{Surface quasi-geostrophic} ({\bf SQG}) \textit{equation}.  We consider the following equation on $\mathbb K^2$:
\begin{equation}\label{SQG-g}
\frac{\d}{\d t}X (t) =g^{{\rm sqg}}(X(t) ),\ \ \ g^{{\rm sqg}}(X): = -(\mathcal R^\perp X)\cdot \nn X, 
\end{equation}
where $X: [0,\infty)\times\mathbb K^2\to\R$, $\mathcal{R}=(\mathcal{R}_j)_{j=1,2}=(\pp_j\Lambda^{-1})_{j=1,2}=(\pp_j(-\DD)^{-\frac12})_{j=1,2}$ is the Riesz transform on $\mathbb K^2$, and
\begin{equation*} 
\mathcal{R}^\perp
:=\big(-\mathcal{R}_2,\mathcal{R}_1\big).
\end{equation*}
Equation \eqref{SQG-g} is an important model in geophysical fluid dynamics. Actually,
it is a special case of the general quasi-geostrophic approximations for atmospheric
and oceanic fluid flow with small Rossby and Ekman numbers. Besides, \eqref{SQG-g} is also an important example of a 2-D active scalar with
a specific structure most closely related to the incompressible Euler equations. 
We refer to the recent monograph \cite{Castro-etal-2020-book} for more details.

Considering stochastic variants of the above PDEs with  both $\mathbb K=\R$ and $\mathbb K=\T$,  the final main result of this paper is to establish
\begin{MResult} 
Novel results on  the well-posedness of   the SPDE  
\begin{align*}
\d X(t)=\,& \Big\{(\tt\Pi\EE X)(t)+\tt b(X(t)) +\tt g(X(t)) \Big\}\d t\\ &+\sum_{k=1}^\infty \Big\{ (\tt\Pi \A_k)X(t) \circ \d W_k(t)+ \tt\Pi \tt h_k (t,X(t))\d \tt W_k(t)\Big\},\ \ t\ge0,
\end{align*} 
on  $\H:=\tt\Pi H^s (\mathbb K^d;\R^m)$   with the following choices:

\begin{itemize}
\item Stochastic {\bf MHD} equations $($cf. Theorem $\ref{SMHD-T}):$ $d\ge2$, $m=2d$, $\mathbb K=\R$ or $\mathbb T$,  $\tt b\equiv0$, 
$(\EE,\tt g)=(\EE^{{\rm mhd}}, g^{{\rm mhd}})$ is given in \eqref{MHD-E-g}, and $\tt\Pi={\rm diag}(\Pi_d,\Pi_d)$ if $\mathbb K=\T$  
while $\tt\Pi={\rm diag}(\Pi_d\Pi_0,\Pi_d\Pi_0)$ if $\mathbb K=\T$, where $\Pi_d$ and $\Pi_0$ are defined in \eqref{Pi-d define} and \eqref{Pi-0 define}, respectively;

\item Stochastic {\bf CH} equation $($see Theorem $\ref{SCH-T}):$ $d=m=1$, $\mathbb K=\R$ or $\mathbb T$, $\tt \Pi=\I$, $\EE\equiv0$, $(\tt b,\tt g)=(b^{{\rm ch}},g^{{\rm ch}})$ is given in \eqref{CH-b-g};

\item Stochastic {\bf AD} equation $($see Theorem $\ref{SAD-T}):$ $d\ge2$, $m=1$, $\mathbb K=\R$ or $\mathbb T$, $\tt \Pi=\I$, $\tt b\equiv0$, $(\EE,\tt g)=(\EE^{{\rm ad}}, g^{{\rm ad}})$ is given in \eqref{AD-E-g};

\item Stochastic {\bf SQG} equation $($cf. Theorem $\ref{SSQG-T}):$ $d=2$, $m=1$, $\mathbb K=\R$ or $\T$,  $\tt b\equiv0$, $\EE\equiv0$ and $\tt g=g^{{\rm sqg}}$ are given in \eqref{SQG-g}, $\tt\Pi=\I$ if $\mathbb K=\R$ and $\tt\Pi=\Pi_0$ if $\mathbb K=\T$, where $\Pi_0$ is given in \eqref{Pi-d define}.
\end{itemize}

\end{MResult}

To conclude this section,  we briefly recall some  references  on nonlinear SPDEs. The stochastic \textbf{MHD} equation has been studied in
\cite{Barbu-Prato-2007-AMP,Hong-Li-Liu-2021-FMC,Li-Liu-Tang-2021-SPA} and the references therein,   see   \cite{Chen-Duan-Gao-2021-PhyD,Chen-Duan-Gao-2021-CMS,Chen-Miao-Shi-2023-JMP,Ren-Tang-Wang-2020-Arxiv,Holden-Karlsen-Pang-2021-JDE,Rohde-Tang-2021-NoDEA,Rohde-Tang-2021-JDDE,Holden-Karlsen-Pang-2022-DCDS,Tang-Yang-2022-AIHP} for the study of  stochastic \textbf{CH} type equations,  \cite{Hausenblas-etal-2022-JDE,Misiats-etal-2022-NoDEA,Rosenzweig-Staffilani-2023-PTRF,Flandoli-Galeati-Luo-2021-CPDE,Tang-Wang-2023-CCM} for the stochastic \textbf{AD} equation,  and  \cite{Rockner-Zhu-Zhu-2015-AoP,Brzezniak-Motyl-2019-SIAM,Zhu-Brzezniak-Liu-2019-SIAM,Alonso-Miao-Tang-2022-JDE} for \textbf{SQG} type equations. 
Results presented in these references are now extended to the case with highly non-local and singular  noise.

The remainder of the paper is organized as follows. 
In Section \ref{Section:General}, we present a general framework (see Theorem \ref{T1}) on the existence, uniqueness, blow-up and non-explosion criteria for \eqref{E1}. 
In Section \ref{Section:SPDEs}, we apply our general framework to derive a well-posedness result (see Theorem \ref{T3.1}) on SPDEs with pseudo-differential noises in Sobolev spaces. This result will be used in Section \ref{Section:Applications} to obtain novel results (see Theorems \ref{SMHD-T}-\ref{SSQG-T}) on above mentioned stochastic models (\textbf{MHD}, \textbf{CH}, \textbf{AD} and \textbf{SQG}), see also  Section \ref{Section:2 example}  for  two more examples of SPDEs.

\section{A general framework} \label{Section:General}

In this section, we establish a general framework to solve \eqref{E1}.
Section \ref{Section:Assum-resutls} includes assumptions on the coefficients $(b,g,h)$ and the main result Theorem \ref{T1} on the local existence, uniqueness, blow-up criterion and global existence. Complete proof of Theorem \ref{T1} is addressed in Section \ref{Section:T1 proof}, 
and a  further improved blow-up criterion is given in Section \ref{Subsection-T-remark}. 

\subsection{Assumptions and main results}\label{Section:Assum-resutls}\ 
We first introduce the notion of \textit{proper regularization} for the singular coefficients $g$ and $h$, which covers concrete regularizations 
used in \cite{Majda-Bertozzi-2002-book,Taylor-1991-book} for the deterministic case, and in
\cite{Li-Liu-Tang-2021-SPA,Ren-Tang-Wang-2020-Arxiv,Alonso-Rohde-Tang-2021-JLNS,Tang-Yang-2022-AIHP} for the stochastic case. 
For two topological spaces $E_1$ and $E_2$, let $\mathscr{B}(E_1;E_2)$ be the class of all measurable maps from $E_1$ to $E_2$, while $C(E_1;E_2)$ 
consists of all continuous maps from $E_1$ to $E_2$. When $E_2$ is a metric space, let $\mathscr{B}_b(E_1;E_2)$ be the set of bounded elements in 
$\mathscr{B}(E_1;E_2)$. Let $\mathscr{K}\subset \mathscr{B}\big( [0,\infty)\times [0,\infty); (0,\infty)\big)$ such that
\begin{equation}\label{SCRK} \mathscr K:=\Big\{K(x,y)\ \text{is\ increasing\ in\ } y \text{\ and\ locally\ integrable\ in\ }x\Big\}.\end{equation} 

\begin{Definition}[Proper Regularization]\label{Define-regularization} $\{(g_n, h_n)\}_{n\ge 1}$ is called a proper regularization of $(g,h)$, if 
$$g_n: [0,\infty)\times\H \rightarrow\H,\ \ \ h_n: [0,\infty)\times\H \rightarrow\LL_{2}(\U;\H), \ \ n\ge 1$$ 
are measurable   such that the following conditions hold for some    $K\in \mathscr K$ 
and a dense subset $\M_0\subset \M$: 
\begin{enumerate}[label={ $\bf (R_\arabic*)$}]

\item\label{R1} For any $t\ge0$ and $X\in\H$, 
\begin{align*} 
\sup_{n\ge1}
\Big\{ \|g_n(t,X)\|_{\M}+ \|h_n(t,X)\|_{\LL_2(\U;\M)} 
\Big\}\leq K(t,\|X\|_{\H}),
\end{align*} 
$$\lim_{n\to\infty}\big\{\|g_n(t,X)- g(t,X)\|_\M+ \|h_n(t,X)- h(t,X)\|_{\mathcal L_2(\mathbb U;\M)}\big\}=0.$$

\item\label{R2} For any $n,N\ge 1$, 
\begin{align*} 
&\sup_{t\lor\|X\|_\H\le N}
\bigg\{\|g_n(t,X)\|_\H +\|h_n(t,X)\|_{\LL_2(\U;\H)}\bigg\}<\infty,\\
\sup_{t\lor\|X\|_\H\lor\|Y\|_\H\le N}&
{\bf 1}_{\{X\ne Y\}} \frac{\|g_n(t,X)-g_n(t,Y)\|_\H+\|h_n(t,X)-h_n(t,Y)\|_{\LL_2(\U;\H)}}{\|X-Y\|_{\H}} <\infty.
\end{align*}

\item\label{R3} For any $Y\in\M_0,\ T>0$ and $\{X_n, X\}_{n\ge 1} \subset   \mathscr{B}_b([0,T];\H)\cap C([0,T];\M)$ with $X_n\to X$ in $C([0,T];\M)$ as $n\to\infty$, 
\begin{align*}
\lim_{n\to\infty}\int_{0}^{T}\Big\{ 
&\Big|\big\<g_n(t,X_n(t))-g(t,X(t)),Y\big\>_{\M}\Big|\notag\\
&+\sum_{k\ge1}
\big\<\big\{h_n (t,X_n(t)) -h(t,X(t))\big\}e_k,Y\big\>_{\M}^2\Big\} \d t=0.
\end{align*}

\item\label{R4} $($Cancellation of singularities$)$ For any $t \ge 0$ and $X\in\H$,
\begin{equation*}
\sup_{n\ge 1 } \sum_{k=1}^\infty \<h_n(t,X)e_k, X\>_\H^2\le K(t,\|X\|_\M)(1+\|X\|_\H^4),
\end{equation*}
\begin{equation*}
\sup_{n\ge1 } \left\{ 2\left\<g_n(t,X),X\right\>_{\H}+\|h_n(t,X)\|^2_{\LL_2(\U;\H)}\right\}\leq K(t,\|X\|_\M)(1+\|X\|_\H^2).
\end{equation*}
\end{enumerate} 
\end{Definition} 

With a proper regularization of $(g,h)$, we will prove   the existence and uniqueness of maximal solution to \eqref{E1} under the following assumption. 

\begin{Assumption}\label{A}
There exists a proper regularization $\{(g_n, h_n)\}_{n\ge 1}$ of $(g,h)$ such that the following conditions hold for some $K\in \mathscr K$ in \eqref{SCRK} and some function $\lambda: \mathbb N\times\mathbb N \rightarrow[0,\infty)$ with $\displaystyle \lim_{n,l\to\infty} 
\lambda_{n,l}=0:$
\begin{enumerate}[label={ $\bf (A_\arabic*)$}]
\item\label{A1} $($Regular drift$)$ For any $X\in \H$ and $t,N\ge 0$, 
\begin{equation*} 
\|b(t,X)\|_{\H}\leq K\big(t,\|X\|_{\M}\big)\|X\|_{\H},
\end{equation*}
\begin{equation*} 
\sup_{\|X\|_\H,\|Y\|_\H\le N}{\bf 1}_{\{X\ne Y\}} \left\{ \frac{\|b(t,X)-b(t,Y)\|_{\H}}{\|X-Y\|_{\H}}
+\frac{\|b(t,X)-b(t,Y)\|_{\M}}{\|X-Y\|_{\M}} \right\} \le K(t,N).
\end{equation*}

\item\label{A2} $($Asymptotic quasi monotonicity$)$ 
For any $n,l\ge1$, $t\ge 0$ and $X,\, Y\in \H$,
\begin{align*} 
\sum_{k\ge 1} \big\<\{h_n(t,X)&-h_l(t,Y)\}e_k, X-Y\big\>_\M^2\\
&\le\,  K(t,\|X\|_\H+ \|Y\|_\H)\|X-Y\|^2_{\M}\left(\lambda _{n,l}+\|X-Y\|^2_{\M}\right),
\end{align*} 
\begin{align*} 
2\big\<g_n(t,X)-& g_l(t,Y), X-Y \big\>_{\M}+ 
\| h_n(t,X)-h_l(t,Y)\|^2_{\LL_2(\U;\M)} \\
\le\,& K(t,\|X\|_\H+ \|Y\|_\H)\left(\lambda_{n,l}+\|X-Y\|^2_{\M}\right).
\end{align*} 
\end{enumerate}
\end{Assumption}
We call \ref{A2} an asymptotic quasi monotonicity condition in $\M$, since  it becomes a monotonicity condition with  coefficient $K(t,\|X\|_\H+ \|Y\|_\H)$  depending on the larger $\H$-norm when $n,l\to\infty$.

Due to the singularity of $(g,h)$, It\^{o}'s formula does not directly apply to $\|X(t)\|^2_{\H}$ for a solution to \eqref{E1}, which is crucial to deduce the continuity in $\H$ from the weak continuity imposed by definition. 
To this end, we make the following assumption, where the regularization operator $T_n$ makes It\^{o}'s formula available for $\|T_n X(t)\|^2_{\H}$ instead of $\|X(t)\|^2_{\H}$. 

\begin{Assumption}\label{B}
There exist $K\in\mathscr K$ in \eqref{SCRK}  and $\{T_n \}_{n\ge1}\subset \LL(\M;\H)$ such that 
$$
\ \lim_{n\rightarrow\infty} \|T_n X-X\|_\H=0, \ \ X\in \H,
$$
and for all $ t\geq0$ and $ N\ge1$, 
\begin{align*}
\sup_{n\ge1,\|X\|_\H\le N} \bigg\{&\sum_{i=1}^{\infty}\big\<T_n h(t,X)e_i,T_n X\big\>_{\H}^2,\\
&\Big|2\left\<T_n g(t,X), T_n X\right\>_{\H}
+\|T_n h(t,X)\|^2_{\LL_2(\U;\H)}\Big|
\bigg\} \leq\, K(t,N).
\end{align*}
\end{Assumption}
Finally, to prove the non-explosion, we assume the following Lyapunov type condition. 
When $V''<0$, a fast enough growth of the noise coefficient will kill the growth of other terms, such that the non-explosion is ensured.

\begin{Assumption}\label{C}
There exists a function $1\le V\in C^2([0,\infty))$ 
satisfying 
$$ V'(r)>0, \ V''(r)\le 0 \ \text{and} \ \lim_{r\to\infty} V(r)=\infty,$$
and a positive function $F(\cdot) \in L_{loc}^1([0,\infty))$
such that for all $(t,X)\in [0,\infty)\times \H$,
\begin{align*} 
V'(\|X\|^2_{\M})&\Big\{
2\big\< b(t,X)+g(t,X),X\big\>_\M
+\|h(t,X)\|^2_{\LL_{2}(\U;\M)}\Big\}\\
&+2V''(\|X\|^2_{\M})\sum_{k=1}^\infty \<h(t,X)e_k, X\>_\M^2 
\leq F(t)V(\|X\|^2_{\M}).
\end{align*}
\end{Assumption}

The main result is stated as follows. 

\begin{Theorem}\label{T1} Assume {\rm \ref{A}} and let $X(0)$ be an $\F_0$-measurable $\H$-valued random variable. Then the following assertions hold. 
\begin{enumerate}[label={\bf (\roman*)}]
\item\label{T1-existence} \eqref{E1} has a unique maximal solution $(X,\tau^*)$, and the solution satisfies
$\p$-{\rm a.s.} 
\begin{equation}\label{Blow-up criterion}
\limsup_{t\rightarrow \tau^*}\|X(t)\|_{\M}=\infty \ \text{on}\ \{\tau^*<\infty\}.
\end{equation}

\item\label{T1-conti} Under  {\rm \ref{B} }  the solution is continuous in $\H$, i.e., 
$\P\big(X\in C([0,\tau^*);\H)\big)=1.$
\item\label{T1-global} Under  {\rm \ref{C}}  the maximal solution $(X,\tau^*)$ is non-explosive, i.e., $\P(\tau^*=\infty)=1$.
\end{enumerate}

\end{Theorem}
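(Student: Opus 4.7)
The plan is to solve a sequence of regularized equations and pass to the limit using the asymptotic quasi-monotonicity in $\M$. Given the proper regularization $\{(g_n,h_n)\}$, consider
\begin{equation*}
\d X_n(t)=\{b(t,X_n(t))+g_n(t,X_n(t))\}\d t+h_n(t,X_n(t))\d \W(t).
\end{equation*}
By \ref{R2} together with \ref{A1}, the coefficients $b+g_n$ and $h_n$ map $\H\to\H$ and $\H\to\LL_2(\U;\H)$ with local Lipschitz estimates, so standard SDE theory on Hilbert spaces produces local maximal solutions $(X_n,\tau_n^*)$ in $\H$. To control $\|X_n\|_\H$ uniformly in $n$, introduce stopping times $\tau_n^N=\inf\{t\ge0:\|X_n(t)\|_\H\ge N\}\wedge N$ and use the "cancellation of singularities" bound \ref{R4} applied to It\^o's formula for $\|X_n(t)\|_\H^2$; a Gronwall/Burkholder argument (using $\E[\cdot|\F_0]$ instead of $\E$, since $X(0)$ carries no moment assumption, cf.\ Lemma on convergence mentioned in the introduction) yields a uniform-in-$n$ $L^2$ estimate of $\sup_{t\le\tau_n^N}\|X_n(t)\|_\H$ on the set where $X(0)$ is bounded. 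Next, apply It\^o's formula in $\M$ to $\|X_n(t)-X_l(t)\|_\M^2$ up to a joint stopping time $\tau_{n,l}^N=\tau_n^N\wedge\tau_l^N$. The condition \ref{A2} bounds the drift contribution by $K(t,2N)(\lambda_{n,l}+\|X_n-X_l\|_\M^2)$ and the martingale-variance contribution by a similar expression times $\|X_n-X_l\|_\M^2$; Gronwall then yields
\begin{equation*}
\E\Big[\sup_{t\le\tau_{n,l}^N}\|X_n(t)-X_l(t)\|_\M^2\,\Big|\F_0\Big]\le C(N,\omega)\lambda_{n,l}\to0,
\end{equation*}
which makes $\{X_n\}$ Cauchy in $C([0,\tau^N];\M)$. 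The limit $X$ inherits the weak-$\H$ continuity from the uniform $\H$-bound, and passing to the limit in the integrated form uses exactly the convergence described in \ref{R3} (with test vectors from the dense set $\M_0$) together with \ref{R1} to identify the limit as a solution to \eqref{E1} up to $\tau^N$. Letting $N\uparrow\infty$ yields the maximal solution $(X,\tau^*)$.

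\textbf{Blow-up in $\M$ and uniqueness.} For the criterion \eqref{Blow-up criterion}, I would argue by contradiction: suppose $\tau^*<\infty$ but $\limsup_{t\to\tau^*}\|X(t)\|_\M<\infty$ on a set of positive probability. Applying It\^o's formula again to $\|X_n(t)\|_\H^2$ and using \ref{R4} gives a bound of the form $\E\|X_n(t\wedge\tau_n^N)\|_\H^2\le C\big(\|X(0)\|_\H,\sup_{s\le t}\|X(s)\|_\M\big)$; passing to the limit and a standard stopping time comparison forces $\tau_n^N\to\infty$ as $N\to\infty$ on that set, a contradiction. Uniqueness of maximal solutions follows from the same quasi-monotonicity estimate applied to two solutions (viewing them as constant-in-$n$ regularizations with $\lambda_{n,n}=0$ in the relevant limit).

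\textbf{Strategy for (ii).} Weak $\H$-continuity is already built in, so the task is to upgrade norm continuity. The regularizing operators $T_n\in\LL(\M;\H)$ from \ref{B} make $T_n X(t)$ into an $\H$-valued It\^o process to which the classical It\^o formula applies. Writing
\begin{equation*}
\|T_nX(t)\|_\H^2=\|T_nX(0)\|_\H^2+\int_0^t\!\!\big\{2\<T_ng,T_nX\>_\H+\|T_nh\|^2_{\LL_2(\U;\H)}\big\}\d s+2\int_0^t\!\!\<T_nh\d\W,T_nX\>_\H+\text{reg. drift},
\end{equation*}
the uniform-in-$n$ bounds in \ref{B} give equicontinuity of $t\mapsto\|T_nX(t)\|_\H^2$ (Kolmogorov or direct BDG on localized intervals $[0,\tau^{*,N}]$ with $\tau^{*,N}=\inf\{t:\|X(t)\|_\H\ge N\}$). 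Since $T_n X(t)\to X(t)$ in $\H$ pointwise by \ref{B}, a uniform-in-$n$ equicontinuity plus pointwise convergence promotes $t\mapsto\|X(t)\|_\H$ to a continuous function, which combined with weak $\H$-continuity of $X$ yields strong $\H$-continuity on $[0,\tau^*)$.

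\textbf{Strategy for (iii).} With the Lyapunov function $V$ from \ref{C}, apply It\^o in $\M$ to $V(\|X(t)\|_\M^2)$, valid because $X$ is continuous in $\M$ and the drift+noise live in $\M$. The condition \ref{C} is tailored precisely so that the drift of $V(\|X(t)\|_\M^2)$ is dominated by $F(t)V(\|X(t)\|_\M^2)$, while the concavity $V''\le0$ and the martingale structure allow a supermartingale/Gronwall comparison after taking expectations up to $t\wedge\tau^{*,R}$ with $\tau^{*,R}=\inf\{t:\|X(t)\|_\M\ge R\}$. This yields $\E V(\|X(t\wedge\tau^{*,R})\|_\M^2)\le V(\|X(0)\|_\M^2)\exp(\int_0^tF)$, forcing $\tau^{*,R}\to\infty$ as $R\to\infty$; combined with the $\M$-blow-up criterion \eqref{Blow-up criterion} established in (i), this gives $\tau^*=\infty$.

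\textbf{Main obstacle.} The delicate step is the Cauchy argument for $\{X_n\}$: the asymptotic quasi-monotonicity \ref{A2} only controls $\M$-differences with a constant depending on the larger $\H$-norms of $X_n,X_l$, so the stopping-time localization in $\H$ is essential, and one must carefully intertwine the uniform $\H$-estimates (from \ref{R4}) with the convergence in $\M$ (driven by $\lambda_{n,l}$) so that the limit inherits both the $\H$-boundedness and the $\M$-continuity required by Definition \ref{solution definition}. Verifying that \ref{R3} legitimately passes the non-linear terms to the limit against test vectors in $\M_0$, and matching this with It\^o's isometry for the martingale term, is the technically heaviest point.
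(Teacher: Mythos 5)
Your parts (ii) and (iii) follow the paper's argument essentially verbatim (It\^o's formula for $\|T_nX(t)\|_\H^2$ plus a Kolmogorov-continuity upgrade, and the supermartingale estimate for $V(\|X(t)\|_\M^2)$), and the overall architecture of (i) --- regularize, get uniform $\H$-bounds from \ref{R4}, get an $\M$-Cauchy sequence from \ref{A2}, identify the limit via \ref{R3} --- is the right one. But there is a genuine gap at the very first step of (i): you regularize \eqref{E1} directly and localize only with the $\H$-norm stopping times $\tau_n^N$. On $[0,\tau_n^N]$ the growth factor $K(t,\|X_n\|_\M)$ in \ref{R4} is bounded by $K(t,cN)$, so Gr\"onwall gives $\E[\sup_{t\le T\wedge\tau_n^N}\|X_n\|_\H^2\,|\,\F_0]\le C_N(1+\|X(0)\|_\H^2)$ with $C_N\sim \exp\big(\int_0^T K(t,cN)\d t\big)$. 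This constant grows with $N$, so Chebyshev only yields $\P(\tau_n^N<T)\le C_N/N^2$, which need not tend to $0$ as $N\to\infty$; you therefore have no uniform-in-$n$ lower bound on the lifespans of the $X_n$, and your Cauchy estimate on $[0,\tau_n^N\wedge\tau_l^N]$ could be an estimate on a random time interval shrinking to $\{0\}$. In the deterministic setting one escapes this by a pathwise ODE comparison for $\|X_n\|_\H^2$; the martingale term forbids that here. The paper's fix is the additional truncation step \eqref{cut-off problem}: the coefficients are multiplied by $\chi_R(\|X^{(R)}(t)-X(0)\|_\M)$, so that wherever they are active the argument of $K$ is at most $\|X(0)\|_\M+2R$; the resulting bound \eqref{Xn uniform bound} is then uniform in $n$ and $N$, the regularized truncated equations are globally solvable, and the maximal solution is assembled afterwards by letting $R\to\infty$ along the $\M$-norm exit times $\tau(R)$.

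A second, related weak point is your contradiction argument for \eqref{Blow-up criterion}: the bound you invoke controls $\E\|X_n(t\wedge\tau_n^N)\|_\H^2$ through $\sup_{s\le t}\|X_n(s)\|_\M$, not through $\sup_{s\le t}\|X(s)\|_\M$, and you only know $X_n\to X$ in $\M$ up to the $\H$-localized stopping times, so the argument is circular as written. In the paper's construction this issue disappears: since the exit times $\tau(R)$ defining $\tau^*$ are measured in the $\M$-norm of $X^{(R)}(t)-X(0)$, the identity $\limsup_{t\to\tau^*}\|X(t)\|_\M=\infty$ on $\{\tau^*<\infty\}$ is immediate from the definition (and Proposition \ref{P1} shows how to sharpen it further). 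Your uniqueness idea --- pass to the limit $n=l\to\infty$ in \ref{A2} to obtain a genuine monotonicity inequality and apply It\^o's formula to $\|X-\tt X\|_\M^2$ --- is correct and is what the paper does.
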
 

\subsection{Proof of Theorem \ref{T1}}\label{Section:T1 proof}

To construct the solution, we first localize the coefficients and then make regularity approximations. 
For any $R>1$, we take a cut-off function $\chi_R\in C^{\infty}([0,\infty);[0,1])$ such that 
$\chi_R(r)=1\ \text{for}\ |r|\le R,\ \text{and}\ \chi_R(r)=0\ \text{for}\ r>2R.$
Consider the following localization of \eqref{E1}: 
\begin{equation}\label{cut-off problem}
\left\{
\begin{aligned}
{\rm d}X^{(R)}(t) 
=\,&\chi^2_R\big(\|X^{(R)}(t)-X(0)\|_{\M}\big)\left[b(t,X^{(R)}(t))+g(t,X^{(R)}(t))\right]\d t\\
&\qquad \qquad \qquad+\chi_R\big(\|X^{(R)}(t)-X(0)\|_{\M} \big)h(t,X^{(R)}(t))\,{\rm d}\W(t),\\ X^{(R)}(0)=\,&X(0).
\end{aligned} 
\right.
\end{equation} 
To solve this problem, we consider its regularization equation for every $n\ge 1$: 
\begin{equation} \label{approximation cut problem}
\left\{
\begin{aligned}
{\rm d}X_n^{(R)}(t)=\,&\chi^2_R\big(\|X_n^{(R)}(t)-X(0)\|_{\M} \big)\left[b(t,X_n^{(R)}(t))+g_n(t,X_n^{(R)}(t))\right]\d t\\
&\qquad \qquad \qquad+\chi_R\big(\|X_n^{(R)}(t)-X(0)\|_{\M} \big)h_n(t,X_n^{(R)}(t))\,{\rm d}\W(t),\\ \ 
X_n^{(R)}(0)=\, &X(0).
\end{aligned} 
\right.
\end{equation}

\begin{Lemma}\label{Xn T estimates} Assume {\rm \ref{A}} and let $X(0)$ be an $\F_0$-measurable $\H$-valued random variable.
\begin{enumerate}[label={\bf (\arabic*)}]
\item \label{Global cut-off estimate} For any $R,n\ge1$, \eqref{approximation cut problem} has a unique global solution, which is continuous process on $\H$. Moreover, there exists a function $F: [0,\infty)\times [0,\infty)\to (0,\infty)$ increasing in both variables such that  for any $R>1$ and $T>0$,
\begin{align}
&\sup_{n\ge1}\E\Big[\sup_{t\in[0,T]}\|X_n^{(R)}(t)\|^2_{\H}\Big|\F_0\Big] 
\leq\, F(T,2R+\|X(0)\|_\M)(1+\|X(0)\|_\H^2).\label{Xn uniform bound} 
\end{align} 
\item \label{Global cut-off convergence} For any $n,l\ge 1$ and $N>0$, let
$$\tau_N^{n,l}(R):= N\land \inf\big\{t\ge 0: \|X_n^{(R)}(t)\|_\H\lor\|X_l^{(R)}(t)\|_\H\ge N\big\}.$$
Then $\p$-{\rm a.s.} 
\begin{equation} \label{Cauchy in E-M}
\lim_{n\rightarrow\infty}\sup_{l\ge n}
\E\bigg[\sup_{t\in[0,\tau_N^{n,l}(R)]} \|X_n^{(R)}(t)-X_l^{(R)}(t)\|^2_{\M}\bigg|\F_0\bigg]=0,\ \ N>0.
\end{equation}\end{enumerate} 
\end{Lemma}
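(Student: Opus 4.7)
The plan is to treat \eqref{approximation cut problem}, for each fixed $n,R$, as a classical Hilbert-space SDE on $\H$ with globally Lipschitz coefficients, and then to exploit the monotonicity-type structure of Assumption~\ref{A} in the larger norm $\|\cdot\|_\M$ to obtain the Cauchy estimate.

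\smallskip
\emph{Part (1): global existence and uniform bound.} Fix $n\ge 1$ and $R>1$. By \ref{R2}, the maps $g_n$ and $h_n$ are locally bounded and locally Lipschitz into $\H$ and $\LL_2(\U;\H)$ respectively, and by \ref{A1} the same holds for $b$. The cut-off factor $\chi_R(\|\cdot-X(0)\|_\M)$ is bounded and globally Lipschitz in $\M$, hence in $\H$ (via $\|\cdot\|_\M\lesssim\|\cdot\|_\H$), and is supported on $\{\|\cdot-X(0)\|_\M\le 2R\}$; a standard Picard/truncation argument will then produce a unique global strong $\H$-valued solution. For the bound \eqref{Xn uniform bound} I will apply It\^o's formula to $\|X_n^{(R)}(t)\|_\H^2$: the $b$-contribution is controlled by the first inequality of \ref{A1}, while the combined $(g_n,h_n)$-contribution is handled by the cancellation inequalities of \ref{R4}. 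The key observation is that the cut-off forces $\|X_n^{(R)}(t)\|_\M\le 2R+\|X(0)\|_\M$, so every $K$-factor can be replaced by the $n$-independent constant $K(T,2R+\|X(0)\|_\M)$. A BDG estimate applied to the martingale term, using the fourth-power bound in \ref{R4}, gives a time-integral inequality to which conditional Gronwall (with respect to $\F_0$) applies; this is precisely why no moment assumption on $X(0)$ is needed.

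\smallskip
\emph{Part (2): Cauchy property in $\M$.} Set $Z:=X_n^{(R)}-X_l^{(R)}$ and apply It\^o's formula in $\M$ to $\|Z(t)\|_\M^2$ up to $\tau_N^{n,l}(R)$. Using the algebraic identity $A_nB_n-A_lB_l=A_n(B_n-B_l)+(A_n-A_l)B_l$, the drift splits into a main piece
\[
\chi_R^2(\|X_n^{(R)}-X(0)\|_\M)\bigl[(b+g_n)(X_n^{(R)})-(b+g_l)(X_l^{(R)})\bigr]
\]
plus a cut-off-difference piece $[\chi_R^2(\|X_n^{(R)}-X(0)\|_\M)-\chi_R^2(\|X_l^{(R)}-X(0)\|_\M)](b+g_l)(X_l^{(R)})$, and the diffusion admits an analogous splitting. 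On the main piece, the $b$-part is dominated by the $\M$-Lipschitz estimate of \ref{A1} and the $(g,h)$-part by the asymptotic quasi-monotonicity \ref{A2}, yielding
\[
K(t,2N)\bigl(\lambda_{n,l}+\|Z(t)\|_\M^2\bigr)
\]
on $\{t\le\tau_N^{n,l}(R)\}$. On the cut-off-difference piece one uses the scalar Lipschitz estimate $|\chi_R^2(x)-\chi_R^2(y)|\lesssim|x-y|$ together with the reverse triangle inequality to bound the $\chi_R^2$-difference by $\|Z(t)\|_\M$, and then the uniform $\M$-bounds of \ref{R1} (combined with \ref{A1} for $b$) to bound $\|(b+g_l)(X_l^{(R)})\|_\M$. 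The martingale quadratic variation is controlled via the first inequality in \ref{A2}. Conditional Gronwall then yields
\[
\E\Bigl[\sup_{t\le\tau_N^{n,l}(R)}\|Z(t)\|_\M^2\,\Big|\,\F_0\Bigr]\le C(N,R,T)\,\lambda_{n,l},
\]
which tends to $0$ as $n,l\to\infty$ by the definition of proper regularization, giving \eqref{Cauchy in E-M}.

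\smallskip
\emph{Main obstacle.} The main technical subtlety I foresee is the bookkeeping in part (2): each difference must be split so that every remaining term falls into one of three categories---(a) absorbed directly by \ref{A2}, (b) a product of a scalar Lipschitz $\chi_R$-difference with an $\M$-bound from \ref{R1}, or (c) a $\|Z\|_\M^2$-term fed into the Gronwall iteration. Once that decomposition is in place, the remaining work---It\^o's formula in $\M$, BDG, Gronwall, and the conditional-expectation device for removing moment assumptions on $X(0)$---is routine.
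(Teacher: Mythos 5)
Your proposal is correct and follows essentially the same route as the paper: local Lipschitz well-posedness from \ref{R2} and \ref{A1}, It\^o's formula for $\|X_n^{(R)}\|_\H^2$ with \ref{R4} and the cut-off replacing every growth factor by $K(t,2R+\|X(0)\|_\M)$, BDG plus conditional Gr\"onwall for \eqref{Xn uniform bound}, and then It\^o in $\M$ for $\|X_n^{(R)}-X_l^{(R)}\|_\M^2$ with the product-difference splitting, the Lipschitz continuity of $\chi_R$ together with the uniform $\M$-bounds of \ref{R1}, and the quasi-monotonicity \ref{A2} feeding a Gr\"onwall iteration that produces the $\lambda_{n,l}$ rate. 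The only cosmetic difference is which factor carries the cut-off difference in the splitting (the paper attaches it to the $n$-indexed coefficients rather than the $l$-indexed ones), which changes nothing.
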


\begin{proof}
\ref{Global cut-off estimate} By \ref{A1} and \ref{R2}, the drift and noise coefficients are locally Lipschitz continuous in $X\in\H$ locally uniform in $t$. 
Hence for any deterministic initial data, \eqref{approximation cut problem} has a unique solution, which is continuous in $\H$. See for instance \cite{Prevot-Rockner-2007-book,Wang-2013-Book}.  Since
$\F_0$ is independent of the equation, 
\eqref{approximation cut problem} also admits a unique solution $X_n^{(R)}(t)$ for any $\F_0$-measurable $\H$-valued random variable $X(0)$,  and $X_n^{(R)}(t)$ is continuous in $\H$ up to its lifespan $\tau_n(R)$ defined as
$$\tau_n(R):=\lim_{N\to\infty}\tau_{n,N}(R),\ \ 
\tau_{n,N}(R):=\inf\big\{t\ge 0: \|X_n^{(R)}(t)\|_{\H}\ge N\big\},\ \ N\ge 1.$$
It remains to prove $\tau_n(R)=\infty$ and the estimate \eqref{Xn uniform bound}. 
Let $$\tt K_{t,X(0)}:= K(t, \|X(0)\|_\M+2R),\ \ \ t\ge 0.$$
By \ref{R4} and It\^o's formula, we obtain 
\begin{align*} 
&{\rm d}\|X_n^{(R)}(t)\|^2_{\H}-\d M_n(t) \\
=\, &\chi^2_R \big(\|X_n^{(R)}(t)-X(0)\|_{\M} \big)\bigg\{ \left\|h(t,X_n^{(R)}(t))\right\|_{\LL_2(\U;\H)}^2\\
&\qquad\qquad\qquad\qquad\qquad\quad\ \   + 2\Big\<b(t,X_n^{(R)}(t))+ g_n(t,X_n^{(R)}(t)),\ X_n^{(R)}(t)\Big\>_{\H} \bigg\} \d t\\
\le\,& 2 \tt K_{t,X(0)} (1+\|X_n^{(R)}(t)\|_\H^2)\d t,\end{align*}
where 
$$ \d M_n(t):=\, 2\chi_R \big(\|X_n^{(R)}(t) - X(0)\|_\M\big) \left\<X_n^{(R)}(t), \ h_n(t,X_n^{(R)}(t))\d \W(t)\right\>_{\H}$$ 
satisfies 
$$\d\<M_n\>(t)\le 4 \tt K_{t,X(0)} (1+\|X_n^{(R)}(t)\|_\H^4\big)\d t.$$

So, by BDG's inequality, for any $T>0$, we find constants $c_1,c_2>0$  such that for any $ s\in [0,T]$ and $N\ge 1$, 
\begin{align*}
&\E\bigg[\sup_{t\in[0,s\land\tau_{n,N}(R)]}\|X_n^{(R)}(t)\|^2_{\H}\bigg|\F_0\bigg]- \|X(0)\|^2_{\H}\\
\le \,&
c_1 \E\bigg[\bigg(\int_0^{s\land\tau_{n,N}(R)}\tt K_{t,X(0)}
\Big(1+\|X_n^{(R)}(t)\|_\H^4\Big)\d t\bigg)^{\frac 1 2}\bigg|\F_0\bigg]\\  
&+ c_1 \E\bigg[\int_0^{s\land\tau_{n,N}(R)}\tt K_{t,X(0)} (1+\|X_n^{(R)}(t)\|_\H^2)\d t\bigg|\F_0\bigg]\\
\le\, & \frac 1 2 \E\Big[\sup_{t\in [0,s\land\tau_{n,N}(R)]} \|X_n^{(R)}(t)\|_\H^2\Big|\F_0\Big]\\
&+ c_2  + c_2 \int_0^s\tt K_{t,X(0)} \E\Big[\sup_{r\in [0,t\land \tau_{n,N}(R)]} \|X_n^{(R)}(r)\|_\H^2\Big|\F_0\Big]\d t.
\end{align*}
By Gr\"{o}nwall's inequality, there exists a function $F: [0,\infty)\times [0,\infty)\to (0,\infty)$ increasing in both variables such that for all $n,N\ge 1$,
\begin{align} 
\E\Big[
\sup_{t\in[0,T\land\tau_{n,N}(R)]}\|X_n^{(R)}(t)\|^2_{\H}\Big|\F_0\Big] 
\le\, F(T,2R+\|X(0)\|_\M)(1+\|X(0)\|_\H^2).\label{EXN}
\end{align} 
This implies that for all $n,N\ge1$,
\begin{align*}
\P(\tau_{n,N}(R)<T|\F_0)\le\,& \P\bigg(\sup_{t\in[0,T\land\tau_{n,N}(R)]}\|X_n^{(R)}(t)\|_{\H} 
\ge N\bigg|\F_0\bigg) \\
\le\, &\frac{F(T,2R+\|X(0)\|_\M)(1+\|X(0)\|_\H^2)}{N^2},
\end{align*} 
so that $\tau_n(R):=\lim_{N\to\infty}\tau_{n,N}(R)$ satisfies
$$ \P(\tau_n(R)<T|\F_0)\le \lim_{N\to\infty} \P(\tau_{n,N}(R)<T|\F_0) =0.$$
Hence, $\P(\tau_n(R)\ge T)= \E[\P(\tau_n(R)\ge T|\F_0)]=1$ for all $T>0$, which means  $\P(\tau_n(R)=\infty)=1$. By letting $N\to\infty$ in \eqref{EXN}, we prove 
\eqref{Xn uniform bound}.

\ref{Global cut-off convergence} Let $Z_{n,l}^{(R)}=X_n^{(R)}-X_l^{(R)}$ for $n,l\ge 1$. We have that
\begin{equation}\label{Z-n-m equation}
{\rm d}Z_{n,l}^{(R)}(t)=\sum_{i=1}^4 A_i^{n,l}(t)\d t+\sum_{i=1}^2 B_i^{n,l}(t) {\rm d}\W(t),\ \ Z_{n,l}^{(R)}(0)=0, 
\end{equation}
where
\begin{align*}
&A_1^{n,l}(t) := \left[\chi^2_R\left(\|X_n^{(R)}(t)-X(0)\|_{\M} \right)
-\chi^2_R(\|X_l^{(R)}(t)-X(0)\|_{\M})\right]b(t,X_n^{(R)}(t)),\\
&A_2^{n,l}(t):=\chi^2_R(\|X_l^{(R)}(t)-X(0)\|_{\M})\left[b(t,X_n^{(R)}(t))- b(t,X_l^{(R)}(t))\right],\\
&A_3^{n,l}(t):=\left[\chi^2_R\left(\|X_n^{(R)}(t)-X(0)\|_{\M}\right)
-\chi^2_R(\|X_l^{(R)}(t)-X(0)\|_{\M})\right]g_n(t,X_n^{(R)}(t)),\\
&A_4^{n,l}(t):= \chi^2_R(\|X_l^{(R)}(t)-X(0)\|_{\M}) \left[g_n(t,X_n^{(R)}(t))- g_l(t,X_l^{(R)}(t))\right],\end{align*} and 
\begin{align*}
&B_{1}^{n,l}(t):= \left[\chi_R(\|X_n^{(R)}(t)-X(0)\|_{\M})-\chi_R(\|X_l^{(R)}(t)-X(0)\|_{\M})\right]h_n(t,X_n^{(R)}(t)),\\
&B_2^{n,l}(t):= \chi_R(\|X_l^{(R)}(t)-X(0)\|_{\M})
[h_n(t,X_n^{(R)}(t))-h_l(t,X_l^{(R)}(t))]. 
\end{align*}
By the It\^{o} formula, we obtain 
\begin{align*}
\d \left\|Z_{n,l}^{(R)}(t)\right\|^2_{\M} = \, & 2\sum_{i=1}^2\left\<Z_{n,l}^{(R)}(t),\ B_i^{n,l}(t) \d\W(t)\right\>_{\M}\\
&+ \bigg\{\sum_{i=1}^2\left\|B_i^{n,l}(t)\right\|_{\mathcal L_2(\U;\M)}^2 + 2 \sum_{i=1}^4 \left\<A_i^{n,l}(t), Z_{n,l}^{(R)}(t)\right\>_\M\bigg\}\d t. 
\end{align*}
By the Lipschitz continuity of $ \chi_R$, \ref{R3}, \ref{A1} and \ref{A2}, we find a constant $C_N>0$ such that  for all $n,l\ge 1$ and $t\in [0,\tau_N^{n,l}]$,
\begin{align*}
\sum_{i=1}^2\sum_{k=1}^\infty \left\<Z_{n,l}^{(R)}(t), B_i^{n,l}(t) e_k\right\>_\M^2  
\le\,& C_NK(t,2N) \left\|Z_{n,l}^{(R)}(t)\right\|_\M^2
\left\{\lambda_{n,l}+\left\|Z_{n,l}^{(R)}(t)\right\|_\M^2\right\},
\end{align*}
\begin{align*}
\sum_{i=1}^2\left\|B_i^{n,l}(t)\right\|_{\mathcal L_2(\U;\M)}^2 + 2 \sum_{i=1}^4 \left\<A_i^{n,l}(t), Z_{n,l}^{(R)}(t)\right\>_\M 
\le\,& C_NK(t,2N)\left\{\lambda_{n,l}+\left\|Z_{n,l}^{(R)}(t)\right\|_\M^2\right\}.
\end{align*}
Therefore, by BDG's inequality to \eqref{Z-n-m equation}, we find constants $a_1,a_2>0$ depending on $N$ such that for all $n,l\ge 1$ and $s\in [0,N]$, 
\begin{align*} &\E\bigg[\sup_{t\in [0,s\land \tau_N^{n,l}(R)]} \left\|Z_{n,l}^{(R)}(t)\right\|_\M^2\bigg|\F_0\bigg]\\
\le\, & a_1 \E \bigg[\int_0^{s\land \tau_N^{n,l}(R)}K(t,2N)\Big\{\lambda_{n,l}+\left\|Z_{n,l}^{(R)}(t)\right\|_\M^2\Big\}\d t\bigg|\F_0\bigg]\\
&+ a_1 \E\bigg[\bigg(\int_0^{s\land \tau_N^{n,l}(R)}K(t,2N)\left\|Z_{n,l}^{(R)}(t)\right\|_\M^2\Big\{\lambda_{n,l}+\left\|Z_{n,l}^{(R)}(t)\right\|_\M^2\Big\}\d t\bigg)^{\frac 1 2}\bigg|\F_0\bigg]\\
\le\, & \frac 1 2 \E\bigg[\sup_{t\in [0,s\land \tau_N^{n,l}(R)]} \left\|Z_{n,l}^{(R)}(t)\right\|_\M^2\bigg|\F_0\bigg]+ a_2 \lambda_{n,l} \\
&+ a_2  \int_0^{s } K(t,2N)\E\bigg[\sup_{r\in [0,t\land\tau_N^{n,l}(R)]} \|Z_{n,l}^{(R)}(r)\|_\M^2\bigg|\F_0\bigg]\d t.\end{align*}
By Gr\"onwall's inequality and noticing $\lambda_{n,l}\to 0$ as $n,l\to\infty$, we prove \eqref{Cauchy in E-M}.
\end{proof} 

Next, we prove that up to a subsequence, $X_n^{(R)}$ converges to a process on $\H$.

\begin{Lemma}\label{Lemma:Convergence of X-n} There exists an $\mathcal{F}_t$-progressive measurable $\H$-valued process $X^{(R)}=(X^{(R)}(t))_{t\ge 0}$ such that 
\begin{equation}\label{X L2 bound}
\E\Big[\sup_{t\in [0,T]} \|X^{(R)}(t)\|_\H^2\Big|\F_0\Big]\le F(T,2R+\|X(0)\|_\M)(1+\|X(0)\|^2_{\H}),\ \ T>0.
\end{equation} 
Moreover, $\{X_n^{(R)}\}$ has a subsequence $($still labeled as $\{X_n^{(R)}\}$ for simplicity$)$ such that $\p$-{\rm a.s.},
\begin{equation}\label{Xn to X}
X_n^{(R)}\xrightarrow[]{n\to \infty}X^{(R)} \ {\rm in}\ C([0,\infty);\M).
\end{equation}
\end{Lemma}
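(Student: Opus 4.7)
The plan is to upgrade the conditional Cauchy estimate \eqref{Cauchy in E-M}, which only controls $X_n^{(R)}-X_l^{(R)}$ on the random intervals $[0,\tau_N^{n,l}(R)]$, to almost-sure convergence on every fixed deterministic interval $[0,T]$, then diagonalize over $T\in\N$, and finally identify the limit $X^{(R)}$ as an $\H$-valued process satisfying \eqref{X L2 bound}. The main obstacle is the double localization: the random stopping times $\tau_N^{n,l}(R)$ must be handled at the level of conditional probabilities, since $X(0)$ carries no moment assumption, so all estimates must be pushed through while keeping $\F_0$-measurable quantities on the right-hand side.

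First, applying conditional Chebyshev to \eqref{Xn uniform bound} would yield
$$\p\bigl(\tau_N^{n,l}(R)<T\,\big|\,\F_0\bigr)\le \frac{2F(T,2R+\|X(0)\|_\M)(1+\|X(0)\|_\H^2)}{N^2},\quad T\in[0,N],$$
uniformly in $n,l\ge 1$, and the right-hand side is an $\F_0$-measurable quantity vanishing as $N\to\infty$. Since on the event $\{\tau_N^{n,l}(R)\ge T\}$ the suprema of $\|X_n^{(R)}-X_l^{(R)}\|_\M$ over $[0,T]$ and over $[0,\tau_N^{n,l}(R)]$ coincide, combining the above bound with conditional Chebyshev applied to \eqref{Cauchy in E-M} and letting first $n,l\to\infty$ and then $N\to\infty$ would show that $\{X_n^{(R)}\}_{n\ge1}$ is Cauchy in $\p(\cdot|\F_0)$-probability, hence in $\p$-probability, in $C([0,T];\M)$ for every $T>0$.

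Second, a standard Borel--Cantelli extraction together with a diagonal procedure over $T\in\N$ would yield a subsequence---still denoted $\{X_n^{(R)}\}$---along which $X_n^{(R)}\to X^{(R)}$ $\p$-a.s.\ in $C([0,\infty);\M)$. Since each $X_n^{(R)}$ is $\F_t$-progressively measurable and $\M$-continuous, these properties transfer to the a.s.\ uniform limit $X^{(R)}$.

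To verify \eqref{X L2 bound}, I would combine the pointwise-in-$t$ a.s.\ convergence $X_n^{(R)}(t)\to X^{(R)}(t)$ in $\M$ with the uniform $\H$-bound \eqref{Xn uniform bound}: along an a.s.\ further subsequence with bounded $\H$-norms, Banach--Alaoglu furnishes a weak $\H$-limit that must coincide with $X^{(R)}(t)$ by uniqueness of $\M$-limits; lower semicontinuity of $\|\cdot\|_\H$ under weak $\H$-convergence then yields $\|X^{(R)}(t)\|_\H\le\liminf_n\|X_n^{(R)}(t)\|_\H$ a.s., and conditional Fatou applied to \eqref{Xn uniform bound} would deliver \eqref{X L2 bound}.
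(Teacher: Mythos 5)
Your proposal is correct and follows essentially the same route as the paper: conditional Chebyshev on \eqref{Xn uniform bound} to control $\P(\tau_N^{n,l}(R)<T\mid\F_0)$, the same splitting combined with \eqref{Cauchy in E-M}, the limits taken first in $n,l$ and then in $N$, and Fatou (dominated convergence) to pass from conditional to unconditional convergence in probability before extracting an a.s.\ convergent subsequence. Your final paragraph (weak compactness in $\H$ plus lower semicontinuity and conditional Fatou) simply makes explicit the step the paper compresses into ``which together with \eqref{Xn uniform bound} implies \eqref{X L2 bound}''.
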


\begin{proof} We first fix a time $T>0$. 
For any $N\ge T$ and $\epsilon>0$, by using \ref{Global cut-off estimate} in Lemma \ref{Xn T estimates} and Chebyshev's inequality, 
we obtain
\begin{align*} 
&\P(\tau_N^{n,l}(R)<T|\F_0)\\
\le\, & \P\bigg(\sup_{t\in [0,T]} \|X_n^{(R)}(t)\|_\H\ge N\bigg|\F_0\bigg)+ \P\bigg(\sup_{t\in [0,T]} \|X_l^{(R)}(t)\|_\H\ge N\bigg|\F_0\bigg)\\
\le\, & \frac {2 F(T,2R+\|X(0)\|_\M)(1+\|X(0)\|^2_{\H})}{N^2}.
\end{align*} 
Then for any $ N>T, n,l\ge 1,$ 
\begin{align*}
&\p\bigg(\sup_{t\in[0,T]}\|X_n^{(R)}(t)-X_l^{(R)}(t)\|_{\M}>\epsilon\bigg|\F_0\bigg)\\
\le\, & \P(\tau_N^{n,l}(R)<T|\F_0) + \p\bigg(\sup_{t\in[0,\tau_N^{n,l}(R)]}\|X_n^{(R)}(t)-X_l^{(R)}(t)\|_{\M}>\epsilon\bigg|\F_0\bigg)\\
\leq\, &\frac {2 F(T,2R+\|X(0)\|_\M)(1+\|X(0)\|^2_{\H})}{N^2}\\ &+\p\bigg(\sup_{t\in[0,\tau_N^{n,l}(R)]}\|X_n^{(R)}(t)-X_l^{(R)}(t)\|_{\M}>\epsilon\bigg|\F_0\bigg).
\end{align*} 
According to \ref{Global cut-off convergence} in Lemma \ref{Xn T estimates}, by first letting $n,l\to\infty$ and then $N\to\infty$, we obtain 
$$ \lim_{n,l\rightarrow\infty} \p\bigg(\sup_{t\in[0,T]}\|X_n^{(R)}(t)-X_l^{(R)}(t)\|_{\M}>\epsilon\bigg|\F_0\bigg)=0,\ \ \epsilon,\,T>0.$$ 
By Fatou's lemma, this implies 
\begin{align*} &\limsup_{n,l\rightarrow\infty} \p\bigg(\sup_{t\in[0,T]}\|X_n^{(R)}(t)-X_l^{(R)}(t)\|_{\M}>\epsilon\bigg)\\
= \, &\limsup_{n,l\rightarrow\infty} \E\bigg[\p\bigg(\sup_{t\in[0,T]}\|X_n^{(R)}(t)-X_l^{(R)}(t)\|_{\M}>\epsilon\bigg|\F_0\bigg)\bigg]\\
\le \, & \E\bigg[\limsup_{n,l\rightarrow\infty} \P\bigg(\sup_{t\in[0,T]}\|X_n^{(R)}(t)-X_l^{(R)}(t)\|_{\M}>\epsilon\bigg|\F_0\bigg)\bigg]=0,\ \ \epsilon,\, T>0.
\end{align*} 
Therefore, up to a subsequence, \eqref{Xn to X} holds for some progressively measurable process $X^{(R)}$ on $\H$, which together with 
\eqref{Xn uniform bound} implies \eqref{X L2 bound}. \end{proof} 

\begin{Lemma}\label{cut-off global solution} Assume {\rm \ref{A}} and let 
$X(0)$ be an $\H$-valued $\mathcal{F}_0$-measurable random variable. Then for any $R\ge 1$, $X^{(R)}$ given by Lemma \ref{Lemma:Convergence of X-n} is the unique global solution to \eqref{cut-off problem} and 
\begin{equation}\label{NI} \E\bigg[\sup_{t\in [0,T]} \|X^{(R)}(t)\|_\H^2\bigg|\F_0\bigg]<\infty,\ \  T>0.
\end{equation} 
\end{Lemma}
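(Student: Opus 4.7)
The moment bound \eqref{NI} is immediate from \eqref{X L2 bound}, since $F(T,2R+\|X(0)\|_\M)(1+\|X(0)\|_\H^2)$ is $\p$-a.s.\ finite. The substance of the lemma lies in showing that (a) the limit $X^{(R)}$ constructed in Lemma~\ref{Lemma:Convergence of X-n} actually solves \eqref{cut-off problem}, and (b) any two solutions of \eqref{cut-off problem} with the same initial data coincide.

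\emph{Existence via limit passage.} I would start from the integrated version of \eqref{approximation cut problem} and send $n\to\infty$ along the a.s.-convergent subsequence from Lemma~\ref{Lemma:Convergence of X-n}, after pairing both sides with an arbitrary test element $Y\in\M_0$. Fix $T>0$. Outside a $\p$-null set, $X_n^{(R)}\to X^{(R)}$ in $C([0,T];\M)$ and each trajectory is bounded in $\H$ by \eqref{Xn uniform bound}, so \ref{R3} is applicable pathwise. Lipschitz continuity of $\chi_R$ gives uniform convergence of the cut-off factor on $[0,T]$. For the $b$-drift, the local $\M$-Lipschitz estimate from \ref{A1} combined with dominated convergence handles the limit; for the singular drift $\<g_n(s,X_n^{(R)}),Y\>_\M$, assumption \ref{R3} together with the bound $\chi_R^2\le1$ does the job. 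For the stochastic term tested against $Y$, I would split the integrand as $\chi_R^n\<(h_n-h)e_k,Y\>_\M+(\chi_R^n-\chi_R^{\infty})\<he_k,Y\>_\M$, bound the first piece in $L^2$ expectation via It\^o's isometry and \ref{R3}, and bound the second piece via uniform pathwise convergence of $\chi_R$ and the $\LL_2(\U;\M)$-bound from \ref{R1}. A further subsequence then gives $\p$-a.s.\ convergence. Density of $\M_0\subset\M$ finally lifts the scalar identities into an identity of $\M$-valued processes, which is exactly \eqref{cut-off problem}.

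\emph{Uniqueness.} My plan is first to transfer the quasi-monotonicity \ref{A2} from $(g_n,h_n)$ to the originals $(g,h)$. The pointwise $\M$-convergence supplied by \ref{R1} together with $\lambda_{n,l}\to0$ allows one to pass to the limit $n,l\to\infty$ in \ref{A2}, yielding for all $X,Y\in\H$
\[
2\<g(t,X)-g(t,Y),X-Y\>_\M+\|h(t,X)-h(t,Y)\|_{\LL_2(\U;\M)}^2\le K(t,\|X\|_\H+\|Y\|_\H)\|X-Y\|_\M^2,
\]
along with the analogous quartic estimate for $\sum_k\<(h(t,X)-h(t,Y))e_k,X-Y\>_\M^2$. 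Given two solutions $X^{(R)},\widetilde X^{(R)}$, both are $\M$-valued semimartingales, so It\^o's formula applies to $Z:=X^{(R)}-\widetilde X^{(R)}$ in $\|\cdot\|_\M^2$. Localizing at $\sigma_N:=\inf\{t:\|X^{(R)}(t)\|_\H\vee\|\widetilde X^{(R)}(t)\|_\H\ge N\}$, the Lipschitz continuity of $\chi_R$, the $\M$-Lipschitz estimate of $b$ in \ref{A1}, and the quasi-monotonicity above combine, after taking conditional expectation given $\F_0$, into $\E[\sup_{r\le t\wedge\sigma_N}\|Z(r)\|_\M^2|\F_0]\le C_N\int_0^t\E[\sup_{r\le s\wedge\sigma_N}\|Z(r)\|_\M^2|\F_0]\,\d s$. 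Gr\"onwall's inequality gives $Z\equiv0$ on $[0,\sigma_N]$, and sending $N\to\infty$ finishes uniqueness, since $\sigma_N\to\infty$ a.s.\ by the a.s.\ finiteness of $\sup_{[0,T]}\|X^{(R)}\|_\H$ obtained from \eqref{X L2 bound}.

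\emph{Main obstacle.} The most delicate step is the passage to the limit in the It\^o integral, where both the cut-off factor $\chi_R$ and the singular noise $h_n$ depend nonlinearly on the approximating sequence. One must carefully coordinate the pathwise uniform convergence of $\chi_R(\|X_n^{(R)}-X(0)\|_\M)$ with the in-expectation convergence provided by \ref{R3}, which is what forces the two-piece decomposition of the integrand and the detour through $L^2$-error bounds and a diagonal subsequence.
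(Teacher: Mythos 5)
Your proposal follows essentially the same route as the paper: \eqref{NI} is read off from \eqref{X L2 bound}; the equation is obtained by testing against $Y\in\M_0$ and passing to the limit in \eqref{approximation cut problem} via \ref{R3}, the uniform bound \eqref{Xn uniform bound}, dominated convergence and a BDG/It\^o-isometry estimate for the stochastic term; and uniqueness is obtained by letting $n=l\to\infty$ in \ref{A2} (using \ref{R1}) to get the monotonicity inequality for $(g,h)$ and then applying It\^o's formula to $\|X^{(R)}-\widetilde X^{(R)}\|_\M^2$ with a Gr\"onwall argument. The only differences are cosmetic (your explicit two-piece splitting of the stochastic integrand and the explicit localization at $\sigma_N$ are slightly more detailed versions of steps the paper treats tersely), so the proof is correct as proposed.
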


\begin{proof}
Obviously, \eqref{NI} follows from \eqref{X L2 bound}. 

To prove that $X^{(R)}$ solves \eqref{cut-off problem} in the sense of Definition \ref{solution definition},
by \ref{R3} we only need to show that $\p$-{\rm a.s.},
\begin{align}
\<X^{(R)}(t) -X(0),Y\>_\M 
=\,& \int_0^t \big\<A(s), Y\>_\M\d s+ \int_0^t \big\<B(s)\d\W(s), Y\big\>_\M,\ \ Y\in\M_0,\ \  t>0,\label{MN4} \end{align} 
where
\begin{align*} &A(s):= \chi^2_R\big(\|X^{(R)}(s)-X(0)\|_{\M}\big)\left\{b(s, X^{(R)}(s))+ g(s,X^{(R)}(s)) \right\}, \\
&B(s):= \chi_R\big(\|X^{(R)}(s)-X(0)\|_{\M}\big) h(s,X^{(R)}(s) ). \end{align*} 
Note that this formula holds for $(X_n^{(R)}, g_n, h_n)$ replacing $(X^{(R)}, g,h)$, i.e., 
for any $n\ge 1$,
\begin{equation}\label{MN5} 
\left\{\begin{aligned}
&\big\<X_n^{(R)}(t) - X(0), Y\big\>_\M =\, \int_0^t \big\<A_n(s), Y\big\>_\M\d s+\int_0^t \big\<B_n(s)\d \W(s), Y\big\>_\M,\\
&A_n(s):=\, \chi^2_R\big(\|X_n^{(R)}(s)-X(0)\|_{\M}\big)
\left\{b(s, X_n^{(R)}(s))+ g_n(s,X_n^{(R)}(s)) \right\},\\
&B_n(s):= \,\chi_R\big(\|X_n^{(R)}(s)-X(0)\|_{\M}\big) h(s,X_n^{(R)}(s) ).
\end{aligned}\right.
\end{equation} 
By \ref{R3}, \ref{A1} and \eqref{Xn to X}, we have 
$$\lim_{n\to\infty} \bigg\{\Big|\big\<A_n(t)- A(t), Y\big\>_\M\Big|+ \sum_{k\ge 1} \big\<\{B_n(t)-B(t)\}e_k, Y\big\>_\M^2\bigg\} =0,\ \ t\ge 0\ \ \p\text{-a.s.}$$
By \eqref{Xn uniform bound} and the fact that $\chi_R(r)=0$ for $r\ge 2R$,  we find random variables $C_1=C_1(t,2R+\|X_0\|_\M)>1$ and $C_2=C_2(t,2R+\|X_0\|_\M)>1$ such that  
\begin{align}
&\E\bigg[\sup_{ s\in [0,t]} \Big\{\big\<A_n(s),Y\>_\M^2 + \sum_{k\ge 1} \big\<B_n(s)e_k, Y\big\>_\M^2\Big\} \bigg|\F_0\bigg]\notag\\
\le\, &C_1+C_1 \E\bigg[\sup_{s\in [0,t]} \|X_n^{(R)}(s)\|_\H^2\bigg|\F_0\bigg]\le C_2,\ \ n\ge 1.\label{MN7}
\end{align}
So, by the dominated convergence theorem we derive 
\begin{align}
\lim_{n\to\infty}\E \bigg\{
&\int_0^t\Big[ \big|\<A_n(s)-A(s),Y\>_\M\big|\notag\\
&+ \Big(\sum_{k\ge 1} \big\<\{B_n(s)-B(s)\}e_k, Y\big\>_\M^2\Big)^{\frac 1 2} \Big]\d s\bigg|\F_0\bigg\}=0,\label{MN8} 
\end{align}
and in the conditional probability $\P(\cdot|\F_0)$ (equivalently, in $\P$), 
\begin{align*} 
&\lim_{n\to\infty} \int_0^t \sum_{k\ge 1} \Big\<\{B_n(s)-B(s)\}e_k, Y\Big\>_\M^2 \d s \\
\le\,& \lim_{n\to\infty}\sqrt{C+C \sup_{s\in [0,t]} \|X_n^{(R)}(s)\|_\H^2} \int_0^t \Big(\sum_{k\ge 1} \Big\<\{B_n(s)-B(s)\}e_k, Y\Big\>_\M^2\Big)^{\frac 1 2} \d s=0.\end{align*} 
From this, \eqref{MN7}, BDG's inequality and the dominated convergence theorem, we find a constant $c>1$ such that 
\begin{align*}&\lim_{n\to\infty}\E\bigg[\sup_{s\in [0,t]} \bigg|\int_0^s \big\<\{B_n(r)-B(r)\}\d\W(r), Y\big\>_\M\bigg| \bigg|\F_0\bigg]\\
\le\,& c \lim_{n\to\infty} \E\bigg[\bigg(\int_0^t\sum_{k\ge 1} \big\<\{B_n(s)-B(s)\}e_k, Y\big\>_\M^2\d s\bigg)^{\frac 1 2}
\bigg|\F_0\bigg] =0.\end{align*}
Combining this, \eqref{Xn to X} and \eqref{MN8}, we prove \eqref{MN4} by letting $n\to\infty$ in \eqref{MN5}. 

To prove the uniqueness, by \ref{R1} and \ref{A2}, we may take $n=m\to\infty$ to derive that
\begin{align} 
2\left\<g(t,X)-g(t,Y), X-Y \right\>_{\M}+&
\| h(t,X)-h(t,Y)\|^2_{\LL_2(\U;\M)} \notag\\
\le\, & K(t,\|X\|_\H+ \|Y\|_\H)\|X-Y\|^2_{\M},\ \ t\ge 0,\ \ X,\, Y\in \H.\label{LN1} 
\end{align}
If $\tt X$ is another solution to \eqref{cut-off problem} with $\tt X(0)=X(0)$ such that \eqref{NI} holds for $\tt X$ replacing $X$, by \eqref{LN1} and It\^o's formula for $\|X^{(R)}(t)- \tt X(t)\|_\M^2$, we prove $\tt X(t)= X^{(R)}(t)$ for all $t\ge 0$. 
\end{proof}
We are now ready to prove the main result in this section.
\begin{proof}[Proof of Theorem \ref{T1}] \ref{T1-existence} By Lemma \ref{cut-off global solution}, for any $R\ge 1$, \eqref{cut-off problem} with initial value $X(0)$ has a unique global solution $X^{(R)}$ satisfying \eqref{NI}. Let 
\begin{equation}
\tau(R):= \inf\big\{t\ge 0: \|X^{(R)}(t)-X(0)\|_\M\ge R\big\}.\label{tau-R-M}
\end{equation}
By the continuity of $X^{(R)}(t)$ in $\M$, we have $\P(\tau(R)>0)=1$ for any $R>0$. Since $\chi_R(\|X^{(R)}(t)-X(0)\|_\M)=1$ for $t\le \tau(R)$, \eqref{E1} coincides with 
\eqref{cut-off problem} up to time $\tau(R)$. This together with \eqref{NI} implies that 
$(X^{(R)},\tau(R))$ is a local solution to \eqref{E1}. By the uniqueness of \eqref{cut-off problem}, we see that $\tau(R)$ is increasing in $R$, and 
$$X^{(R)}(t)= X^{(R+1)}(t),\ \ t\le \tau(R), \ R\ge 1\ \ \p\text{-a.s.}$$
Let $\tau^* :=\lim_{R\to\infty} \tau(R)$, $\tau(0):=0$ and we define
\begin{align} 
X(t):=\sum_{R=1}^\infty \textbf{1}_{[\tau(R-1), \tau(R))}(t) X^{(R)}(t),\ \ t\in [0,\tau^*).\label{tau-R-M-X} 
\end{align}
Then one can conclude that $(X,\tau^*)$ is a local solution to \eqref{cut-off problem}. Moreover, according to the definitions of $\tau^*$ and $\tau(R)$, we have $\p$-{\rm a.s.}, $$\limsup_{t\to\tau^*}\|X(t)\|_\M=\infty\ \text{on}\ \{\tau^*<\infty\},$$ so that it is actually a maximal solution. 
Uniqueness follows from \eqref{LN1} and It\^o's formula for $\|X(t)-\tt X(t)\|_\M^2$ if $(\tt X, \tt\tau^*)$ is another maximal solution with initial value $X(0)$. 

\ref{T1-conti} 
Let $(X,\tau^*)$ be the unique maximal solution as above. 
Since $X\in C([0,\tau^*);\M)$ and $\H\hookrightarrow\M$ is dense, $X$ is weakly continuous in $\H$ (cf. \cite[page 263, Lemma 1.4]{Temam-1977-book}), so that 
$\|X(\cdot)\|_\H$ is lower semi-continuous. Thus, 
\begin{equation}\label{TNN} \tau_N:=N\land \inf\big\{t\ge 0: \|X(t)\|_\H\ge N\big\},\ \ N\ge 1\end{equation} are stopping times and 
$\tau^*=\lim_{N\to\infty} \tau_N.$ 
By the weak continuity of $X$ in $\H$, it suffices to prove 
\begin{equation}\label{CTT} 
\|X(\cdot)\|_\H\in C([0,\tau_N]),\ \ \ N\ge 1.
\end{equation} 
Let $K_N=K(N,N)$ for $K$ in \ref{B}. By  It\^{o}'s formula, for any $n\ge 1$ we find a martingale $M_t^{(n)}$ such that  for all $n\ge1$,
\begin{equation}\label{C0T}
\left\{\begin{aligned}
&\d \<M^{(n)}\>(t)\le K_N \d t,\ \ t\in [0,\tau_N],\\
-K_N \d t\le & \d \|T_n X(t)\|_\H^2 + \d M^{(n)}(t) \le K_N \d t,\ \ t\in [0,\tau_N].
\end{aligned}\right.
\end{equation}
Then there exists a constant $C_N>0$ such that 
$$\E\Big[\big|\|T_n X(t\land\tau_N)\|_\H^2-\|T_n X(s\land\tau_N)\|_\H^2\big|^4\Big]\le C_N|t-s|^2,\ \ \ t,s\ge 0,\, n\ge 1.$$
By \ref{B} and Fatou's lemma with $n\to\infty$, we derive 
$$\E\Big[\big|\|X(t\land\tau_N)\|_\H^2-\|X(s\land \tau_N)\|_\H^2\big|^4\Big]\le C_N|t-s|^2,\ \ t,s\ge 0.$$
This together with Kolmogorov's continuity theorem proves \eqref{CTT}.

\ref{T1-global} 
By It\^o's formula for $\|X(t)\|_\M^2$, we have 
\begin{align*} \d \|X(t)\|_\M^2=\,& 2\left\<h(t,X(t))\d \W(t), X(t)\right\>_\M \\
&+ 2\left\<g(t,X(t))+b(t,X(t)), X(t)\right\>_\M\d t\\
&+ \|h(t,X(t))\|_{\mathcal L_2(\U;\M)}^2 \d t,\ \ t\in [0,\tau^*). \end{align*} By \ref{C} and It\^o's formula, this implies 
$$\d V(\|X(t)\|_\M^2)\le F(t) V(\|X(t)\|_\M^2)\d t+\d M_t,\ \ t\in [0,\tau^*),$$
where $M_t$ is a martingale up to $\tt\tau_{N}$ (for any $N\ge1$) defined as
\begin{equation*}
\tt\tau_N:=N\land \inf\big\{t\ge 0: \|u(t)\|_{\M}\ge N\big\},\ \ N\ge 1.
\end{equation*}
Thus, 
$$ \E \big[V(\|X(t\land\tt\tau_{N})\|_{\M}^{2})\big|\F_0\big]\leq V(\|X(0)\|_{\M}^{2}){\rm e}^{\int_0^tF(s)\d s},\ \ t\ge 0.$$
Accordingly, by the continuity in $\M$ of $X(t)$ (see \eqref{Xn to X}), we derive that $\tt\tau^*:=\lim_{N\to\infty}\tt\tau_{N}$ satisfies
\begin{align*} 
\p\big(\tt\tau^*<t\big|\F_0\big)  \le\, & \p\big(\tt\tau_{N}<t\big|\F_0\big)\\ 
\le \,&\frac{ \E\big[V(\|X(t\land \tt\tau_{N})\|_\M^2)\big|\F_0\big]}{V(N^2)} \le \frac{ V(\|X(0)\|_{\M}^{2}){\rm e}^{\int_0^tF(s)\d s}}{V(N^2)},\ \ N\ge 1,\ t>0.
\end{align*} 
Letting $N\uparrow\infty$ then $t\uparrow\infty$ yields
$\p(\tt\tau^*<\infty|\F_0)=0$. However, we infer from \eqref{Blow-up criterion} that $\tau^*=\tt\tau^*$ $\p$-a.s., hence we obtain $\p(\tau^*<\infty)=0.$ 
\end{proof} 
%

\subsection{Improved blow-up criterion}\label{Subsection-T-remark} 

According to Definition \ref{solution definition}, for the maximal solution $(X,\tau^*)$ we have $\limsup_{t\uparrow\tau} \|X(t)\|_{\H}=\infty$ on $\{\tau^*<\infty\}$.
Since $\|\cdot\|_\M\lesssim \|\cdot\|_\H$, 
\eqref{Blow-up criterion} is a criterion of blow-up, i.e., $X$ blows up in $\H$ if and only if it blows up in $\M$, which has been used in the proof for \ref{T1-global}.
The following result further improves this criterion. Particularly, if in the following $\B(\cdot)=\|\cdot\|_{\mathbb B}$ with $\M\hookrightarrow\mathbb B$ for some Banach space $\mathbb B$, then $X$ blows up in $\H$ if and only if it blows up in $\mathbb B$.

\begin{Proposition}\label{P1}
Assume {\rm \ref{A}} and let $\B(\cdot)\in C(\M;[0,\infty))$ be a subadditive function such that $$\B(X)\lesssim \|X\|_{\M}.$$
If the growth factor $K(t,\|X\|_\M)$ in \ref{R4} and \ref{A1} is replaced by $K\left(t,\B(X)\right)$,
then $\p$-{\rm a.s.} we have
\begin{equation}\label{Blow-up criterion V}
\limsup_{t\rightarrow \tau^*}\B(X)=\infty \ \text{on}\ \{\tau^*<\infty\}.
\end{equation}
\end{Proposition}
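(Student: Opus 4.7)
The plan is to revisit the approximation scheme from Section \ref{Section:T1 proof} and establish an $\H$-norm bound on $X_n^{(R)}$ up to the hitting time of $\{\B(\cdot)\ge L\}$ that is \emph{independent} of both the cutoff parameter $R$ and of $n$, depending only on $L$. Since $\|\cdot\|_\M\lesssim\|\cdot\|_\H$, this will imply that on any event where $\B(X(t))$ stays bounded along the trajectory, so does $\|X(t)\|_\M$; the original criterion \eqref{Blow-up criterion} then forces $\tau^*=\infty$ on such an event, which is exactly the contrapositive of \eqref{Blow-up criterion V}.

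Because $\B\lesssim\|\cdot\|_\M$ and $K(t,\cdot)$ is non-decreasing, the strengthened hypotheses imply the original forms of \ref{R4} and \ref{A1} after rescaling $K$, so Theorem \ref{T1} still yields a unique maximal solution $(X,\tau^*)$ with \eqref{Blow-up criterion}, together with its construction via the approximations $X_n^{(R)}$. For any $L>0$, introduce
\begin{equation*}
\sigma_L^{n,R}:=\inf\big\{t\ge 0:\B(X_n^{(R)}(t))\ge L\big\},
\end{equation*}
apply It\^o's formula to $\|X_n^{(R)}(t)\|_\H^2$, and use the sharpened \ref{A1}--\ref{R4} together with $\chi_R,\chi_R^2\le 1$ to obtain on $[0,\sigma_L^{n,R}]$
\begin{equation*}
\d\|X_n^{(R)}(t)\|_\H^2\le c_1 K(t,L)\big(1+\|X_n^{(R)}(t)\|_\H^2\big)\,\d t+\d\tt M(t),
\end{equation*}
with $\d\<\tt M\>(t)\le c_2 K(t,L)\big(1+\|X_n^{(R)}(t)\|_\H^4\big)\,\d t$. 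BDG and Gr\"onwall then deliver, for all $n,R\ge 1$ and $T,L>0$,
\begin{equation}\label{Key H estimate Prop}
\E\Big[\sup_{t\in[0,T\wedge\sigma_L^{n,R}]}\|X_n^{(R)}(t)\|_\H^2\,\Big|\,\F_0\Big]\le G(T,L)\big(1+\|X(0)\|_\H^2\big),
\end{equation}
with $G$ independent of $n$ and $R$.

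Along the subsequence from Lemma \ref{Lemma:Convergence of X-n}, $X_n^{(R)}\to X^{(R)}$ in $C([0,T];\M)$ $\p$-a.s., so continuity of $\B$ on $\M$ gives $\liminf_n\sigma_L^{n,R}\ge \sigma_L^R:=\inf\{t\ge 0:\B(X^{(R)}(t))\ge L\}$, and weak lower semicontinuity of $\|\cdot\|_\H$ along $\M$-convergent sequences combined with Fatou transfers \eqref{Key H estimate Prop} to $X^{(R)}$. Since the maximal solution satisfies $X=X^{(R)}$ on $[0,\tau(R)]$ and $\tau(R)\uparrow\tau^*$, setting $\sigma_L:=\inf\{t\ge 0:\B(X(t))\ge L\}\wedge\tau^*$ and letting $R\to\infty$ yields
\begin{equation*}
\E\Big[\sup_{t\in[0,T\wedge\sigma_L]}\|X(t)\|_\H^2\,\Big|\,\F_0\Big]\le G(T,L)\big(1+\|X(0)\|_\H^2\big).
\end{equation*}
On the event $\Omega_L:=\{\sup_{t<\tau^*}\B(X(t))<L\}$ we have $\sigma_L=\tau^*$, so the above bound together with $\|\cdot\|_\M\lesssim\|\cdot\|_\H$ gives $\sup_{t<\tau^*\wedge T}\|X(t)\|_\M<\infty$ $\p$-a.s.\ on $\Omega_L$; by \eqref{Blow-up criterion} this forces $\tau^*\ge T$ there, and since $T$ is arbitrary, $\Omega_L\subset\{\tau^*=\infty\}$. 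As $L>0$ is arbitrary, \eqref{Blow-up criterion V} follows.

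The main obstacle is the limit passage for the stopping times $\sigma_L^{n,R}$, since $X_n^{(R)}\to X^{(R)}$ holds only in $\M$ while $\|\cdot\|_\H$ lives in a smaller space and $\B$ is only assumed continuous on $\M$. A clean workaround is to replace the hard cutoff at level $L$ by a smooth nonincreasing weight $\psi\in C^1$ with $\psi\equiv 1$ on $[0,L]$ and $\psi\equiv 0$ on $[L+1,\infty)$, and to estimate $\E\big[\psi\big(\sup_{s\le t}\B(X_n^{(R)}(s))\big)^2\|X_n^{(R)}(t)\|_\H^2\,\big|\,\F_0\big]$ directly via It\^o and Gr\"onwall; this bypasses any stopping-time convergence, and the limits $n\to\infty$ and $R\to\infty$ then proceed by dominated convergence combined with weak lower semicontinuity of $\|\cdot\|_\H$.
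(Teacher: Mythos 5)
Your argument is correct and reaches the conclusion by a genuinely different implementation of the same underlying idea (a uniform $\H$-bound on the approximants up to the time $\B$ reaches a fixed level, transferred to $X$ and played against blow-up of the $\H$-norm). The paper instead re-runs the construction of Section \ref{Section:T1 proof} with the localization $\chi_R\big(\B(X^{(R)}(t)-X(0))\big)$ replacing $\chi_R\big(\|X^{(R)}(t)-X(0)\|_\M\big)$: inside the cut-off, subadditivity gives $\B(X_n^{(R)}(t))\le \B(X_n^{(R)}(t)-X(0))+\B(X(0))\le 2R+\B(X(0))$, so the analogue of \eqref{Xn uniform bound} holds globally in time and no stopping-time limit is needed --- at the price of repeating Lemmas \ref{Xn T estimates}--\ref{cut-off global solution} for the modified cut-off and of using subadditivity in an essential way. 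You keep the original $\M$-cut-off and superimpose the hitting time of $\{\B\ge L\}$, on which $K(t,\B(X_n^{(R)}(t)))\le K(t,L)$ directly; this reuses the construction verbatim and in fact never invokes subadditivity (only continuity of $\B$ and $\B\lesssim\|\cdot\|_\M$, the latter guaranteeing that the strengthened hypotheses still imply \ref{R4} and \ref{A1} so that Theorem \ref{T1} applies). The cost is the limit passage through the stopping times, which you correctly isolate and resolve: a.s.\ convergence in $C([0,T];\M)$ plus continuity of $\B$ yields locally uniform convergence of $t\mapsto\B(X_n^{(R)}(t))$ (the union of the trajectories over $[0,T]$ is compact in $\M$), hence the $\liminf$ of the approximate hitting times dominates the limiting one, and weak lower semicontinuity of $\|\cdot\|_\H$ with conditional Fatou transfers the bound, exactly as in the proof of \eqref{X L2 bound}. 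Two small points worth spelling out: the martingale terms should first be localized by the times $\tau_{n,N}(R)$ as in Lemma \ref{Xn T estimates} before applying BDG and Gr\"onwall; and to pass from $\sup_{t<\tau^*}\B(X(t))=\infty$ on $\{\tau^*<\infty\}$ to the stated $\limsup_{t\to\tau^*}\B(X(t))=\infty$ one uses the continuity of $t\mapsto\B(X(t))$ on $[0,\tau^*)$.
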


\begin{proof} For any $R\ge 1$, let 
$$\tau_{\B}(R):=\inf\big\{t\ge 0: \B(X)\ge R\big\}.$$ 
Now we consider the cut-off problem \eqref {cut-off problem} with $\B\big(X^R(t)-X^R(0)\big)$ replacing $\|X^{(R)}(t)-X(0)\|_{\M}$. 
Notice that the
subadditivity of $\B(\cdot)$ implies 
$\B(Y)\leq \B(Y-Z)+\B(Z).$
Using
this, \ref{R4} and \ref{A1} with $K\left(t,\B(X)\right)$ being replaced by $K(t,\|X\|_\M)$ in the proof of \eqref{Xn uniform bound}, we find
$$\sup_{n\ge 1} \E\bigg[\sup_{t\in [0,T\land\tau_{\B}(R)] } \|X_n^{(R)}(t)\|_\H^2\bigg|\F_0\bigg]\le F\Big(T, 2R+\B(X(0))\Big)(1+\|X(0)\|_\H^2),\ \ T, R\ge 1.$$
By  the
subadditivity of $\B(\cdot)$ again, $\B(X)\lesssim \|X\|_{\M}$ and the construction of $X(t)$ (cf. Lemma \ref{Xn T estimates} \ref{Global cut-off convergence},  \eqref{Xn to X} and \eqref{NI}), we derive 
$$\E\bigg[\sup_{t\in [0,T\land\tau_{\B}(R)] } \|X(t)\|_\H^2\bigg|\F_0\bigg]\le F\Big(T, 2R+\B(X(0))\Big)(1+\|X(0)\|_\H^2),\ \ T,R\ge 1.$$
Since $\limsup_{t\uparrow \tau^*} \|X(t)\|_\H=\infty$ on $\{\tau^*<\infty\}$, this implies $\tau_\B(R)\le \tau^*$ for any $R\ge 1$, so that by
the definition of $\tau_\B(R)$, we obtain \eqref{Blow-up criterion V}.
\end{proof}

\section{SPDE with pseudo-differential noise}\label{Section:SPDEs} 

In this part, we apply Theorem \ref{T1} to nonlinear SPDE \eqref{EN} with pseudo-differential noise on $\tt\Pi H^s(\mathbb K^d;\R^m)$ for some $s>\frac{d}{2}+1$. To this end,
we first recall some notions and preliminary results on pseudo-differential operators, then state the main results and finally present a proof.

\subsection{Notations and preliminary results}\label{Section:Notations}

\subsubsection{Pseudo-differential operators} We mainly focus on two cases:  $\mathbb K=\R$ and $\mathbb K=\T$.

{\bf Case of $\mathbb K=\R$.}  Let $d,m\in \mathbb N$.
Recall the Fourier and inverse Fourier transforms: 
$$(\mathscr Ff)(\xi):=\int_{\mathbb R^d}f(x){\rm e}^{-{\rm i}(x\cdot \xi)}{\rm d}x,\ \ 
(\mathscr F^{-1}f)(x)= \frac{1}{(2\pi)^d}\int_{\mathbb R^d}f(\xi){\rm e}^{{\rm i} (x\cdot \xi)}{\rm d}\xi,\ \ x,\,\xi\in\mathbb R^d,$$ 
where ${\rm i}=\sqrt{-1}$ as before, and $(x\cdot\xi):=\sum_{i=1}^m x_i\xi_i$. 
Let $\alpha=(\alpha_1,\cdots,\alpha_d)\in \N_0^d:=(\N\cup\{0\})^d$ be a multi-index and recall that $\pp_k$ is the $k$-th partial derivate in $\mathbb R^d$. 
We define
$$|\alpha|_1:=\sum_{k=1}^d \alpha_k\ \ \partial^\alpha:= \prod_{k=1}^d \partial_{k}^{\alpha_k}.$$ 
When both space variable $x$ and frequency variable $\xi$ appear, we use 
$\pp_{x}^{\alpha}$ and $\pp_{\xi}^{\alpha}$ to denote $\pp^{\alpha}$ in $x$ and $\xi$, respectively. 
Next, we will introduce the class of nice functions called symbols, from which we can define pseudo-differential operators. 
For any $s\in \R$ and $d,m\ge1$, we define the class of symbols as
\begin{align}
\SS^s\big(\R^d\times \R^d;\mathbb C^{m\times m}\big)
:=\bigg\{&\wp\in C^\infty\big(\R^d\times \R^d;\mathbb C^{m\times m}\big)\, : \,\notag\\ &\sup_{(x,\xi)\in\R^d\times \R^d}\frac{|\pp_{x}^{\beta} \pp_{\xi}^{\alpha} {\wp }(x, \xi)|_{m\times m}}{(1+|\xi|)^{ s-|\alpha|_1}}<\infty,\  \beta,\, \alpha\in \N_0^d\bigg\},\label{Symbol}
\end{align}   
where, and in the sequel, $|\cdot|_{m\times m}$ and $|\cdot|$ are usual norms in $\mathbb C^{m\times m}$ and $\R^d$, respectively. 
For any $\wp\in \SS^s\big(\R^d\times \R^d;\mathbb C^{m\times m}\big)$, the pseudo-differential operator $\OP(\wp)$ with symbol $\wp$ is defined as
$$[\OP(\wp)f](x):=\frac{1}{(2\pi)^d} \int_{\mathbb R^d} \wp(x,\xi) (\mathscr Ff)(\xi) {\rm e}^{ {\rm i} (x \cdot \xi)} \d \xi,\ \ \ x\in\mathbb R^d.$$ 
Throughout this paper, we focus on real-valued operators, i.e., $[\OP({\wp})f]$ is real for real function $f$. Equivalently, as in \eqref{tt Pi pi0}, we require
\begin{equation}\label{RE} 
\wp(x,-\xi)=\overline{ \wp(x,\xi)},\ \ (x,\xi)\in\R^d\times \R^d.
\end{equation}
To highlight the difference between symbols depending on $(x,\xi)$ and symbols depending only on $\xi$, we use $\R^d_{x}$ and $\R^d_{\xi}$ to distinguish the space $\R^d$ for $x$ and $\xi$, respectively, and then $\SS^s\big(\R^d\times \R^d;\mathbb C^{m\times m}\big)$ defined in \eqref{Symbol} is relabeled as
$\SS^s\big(\R_x^d\times \R_{\xi}^d;\mathbb C^{m\times m}\big)$. 
Then we define 
$$\SS\big(\R_{\xi}^d;\mathbb C^{m\times m}\big):=\Big\{\wp\in \SS^s\big(\R_x^d\times \R_{\xi}^d;\mathbb C^{m\times m}\big)\, : \, \wp(x,\xi)=\wp(\xi)\Big\}.$$
To simplify notations, when $d,m$ are clear in the context and no confusion can arise, we will write
\begin{equation}\label{Ss Real}
\left\{\begin{aligned}
\SS^s:=\,&\Big\{\wp\in \SS^s\big(\R_x^d\times \R_{\xi}^d;\mathbb C^{m\times m}\big)\ : \ \eqref{RE} \ \text{holds}\Big\},\\ 
\SS_0^s:=\,& \Big\{\wp\in \SS^s\big(\R_{\xi}^d;\mathbb C^{m\times m}\big)\ : \ \eqref{RE} \ \text{holds}\Big\}.
\end{aligned}\right.
\end{equation}
Then, for any $s\in\R$, we define
\begin{align}
\OP\SS^s:=\Big\{\OP({\wp })\, : \, {\wp } \in\SS^s\Big\},\ \ \OP\SS_0^s:= \Big\{\OP({\wp })\, : \, {\wp } \in \SS_0^s\Big\}.\label{OPS Real}
\end{align}
It is well-known that $\SS^s$ is a Fr\'{e}chet space equipped with the topology generated by seminorms 
$\big\{|\cdot|^{\R^d\times\R^d}_{\beta,\alpha;s}\big\}_{\beta,\alpha\in \N_0^d}$, where
$$|\wp|^{\R^d\times\R^d}_{\beta,\alpha;s}:=\sup_{(x,\xi)\in\R^d_{x}\times \R^d_{\xi}}\frac{|\pp_{x}^{\beta} \pp_{\xi}^{\alpha} {\wp }(x, \xi)|_{m\times m}}{(1+|\xi|)^{ s-|\alpha|_1}}.$$
A set $S\subset \SS^{s}$ is called bounded, if 
$\sup_{\wp \in S} |\wp|^{\R^d\times\R^d}_{\beta,\alpha;s} <\infty, \ \ \beta,\alpha\in \N_0^d.$ 

Let $\LL(E_1;E_2)$ be the space of bounded linear operators between two 
Banach spaces $E_1$ and $E_2$.  Although $\OP\SS^s$ can be measured by operator norm $\LL(H^q;H^{q-s})$ (see Lemma \ref{LOP2}), it is   convenient to consider the boundedness for  subsets of
$\OP\SS^s$ in terms of symbols, see for instance Lemmas \ref{LOP2}, \ref{LOP4} and \ref{LOP5} below. Indeed, since $\wp\mapsto\OP(\wp)$ is one-to-one,  $\OP\SS^s$ is also a
Fr\'{e}chet space with seminorms $\{|{\rm OP}({\wp })|^{\R^d\times\R^d}_{\beta,\alpha;s}:=|{\wp }|^{\R^d\times\R^d}_{\beta,\alpha;s}\}_{\beta,\alpha\in \mathbb N_0^d}$, 
and similarly one can define the boundedness in $\OP\SS^s$. 

To emphasize the scalar symbols (i.e., $m=1$ in \eqref{Symbol}), as in \eqref{Ss Real}, we simply write
\begin{align*}
\S^s=\Big\{\wp\in \SS^s\big(\R_x^d\times \R_{\xi}^d;\mathbb C\big)\, : \, \eqref{RE} \ \text{holds}\Big\},\ \ \S_0^s:= \Big\{\wp\in \SS^s\big(\R_{\xi}^d;\mathbb C\big)\, : \, \eqref{RE} \ \text{holds}\Big\}. 
\end{align*}
Then, 
as in \eqref{OPS Real}, we denote by $\OP\S^s$ and $\OP\S_0^s$ the pseudo-differential operators with scalar symbols in $\S^s$ and $\S_0^s$, respectively.
\medskip

{\bf Case of $\mathbb K=\T$.} 
On torus, the Fourier and inverse Fourier transforms are defined as
$$(\mathscr Ff) (k):=\int_{\mathbb T^d}f(x){\rm e}^{-{\rm i}(x\cdot k)}{\rm d}x,\ \ 
(\mathscr F^{-1}f)(x)=  \frac{1}{(2\pi)^d}\sum_{k\in\Z^d}f(k){\rm e}^{{\rm i} (x\cdot k)},\ \ x\in\mathbb T^d,\ k\in\Z^d.$$

Now we recall some facts about (toroidal) symbol class and (toroidal) pseudo-differential operators. 
For two multi-indexes $\alpha, \beta$ with $\beta\le\alpha$,   
let 
$$\left(\begin{matrix}\alpha \\ \beta\end{matrix}\right):=\prod_{i=1}^d\left(\begin{matrix}\alpha_{i} \\ \beta_{i}\end{matrix}\right)
=  \prod_{i=1}^d\frac{\alpha_i!}{\{\beta_i! \}\cdot\{(\alpha_i-\beta_i)!\}} .$$
Then for any $\alpha\in \mathbb N_0^d$, the   partial difference operator $\triangle^\alpha$ for   a function  $g:\mathbb{Z}^{d}\to \mathbb{C}$ is given by 
$$   (\triangle^\alpha_{k}g)(k):=\displaystyle\sum_{\gamma\in\N_0^{d}, \gamma\leq \alpha} (-1)^{|\alpha-\gamma|_1}\binom{\alpha}{\gamma}g(k+\gamma),\ \ k\in \mathbb Z^d.$$
Then   the (toroidal) symbol class of order $s$ for $s\in\R$ is  defined as (cf. \cite{Ruzhansky-Turunen-2010-JFAA}):
\begin{align*}
\SS^{s}(\T^d\times\mathbb Z^d;\mathbb{C}^{m\times m}):=\bigg\{&\wp(\cdot,k)\in  C^\infty (\T^d;\mathbb{C}^{m\times m}),\, k\in\Z^d\, : \,\notag\\
&\sup_{(x,k)\in \T^d\times \Z^d}  \frac{|\triangle_{k}^{\alpha} \partial_{x}^{\beta} \wp(x, k)|_{m\times m}}{(1+|k|)^{ s-|\alpha|_1}}<\infty,\, \beta,\, \alpha\in \mathbb N_0^d\bigg\}.
\end{align*}
Similar to the  case $\mathbb K=\R$, $\SS^{s}(\T^d\times\mathbb Z^d;\mathbb{C}^{m\times m})$ is also a Fr\'{e}chet space equipped with the topology generated by seminorms $\big\{|\cdot|^{\T^d\times\Z^d}_{\beta,\alpha;s}\big\}_{\beta,\alpha\in \N_0^d}$ with 
$$|\wp|^{\T^d\times\Z^d}_{\alpha,\beta;s}:=\sup_{(x,k)\in \T^d\times \Z^d}  \frac{|\triangle_{k}^{\alpha} \partial_{x}^{\beta} \wp(x, k)|_{m\times m}}{(1+|k|)^{ s-|\alpha|_1}}.$$ 
The (toroidal) pseudo-differential operator   with symbol $\wp$ is defined by
\begin{equation*}
[\OP(\wp)f](x):= \displaystyle\frac{1}{(2\pi)^d}\sum_{k\in \Z^d} \wp(x,k) 
(\mathscr Ff)(k){\rm e}^{ {\rm i}(x\cdot k)} ,\ \ x\in \T^d.
\end{equation*}
Similarly, we consider real-valued operators such that \eqref{RE} holds true with $k\in\Z^d$ replacing $\xi\in\R^d$. By convenient abuse of notation, 
$\SS^s$, $\SS_0^s$ also denote symbols on $\T^d\times \Z^d$ or $\Z^d$ as in \eqref{Ss Real}, and $\OP\SS^s$ and $\OP\SS_0^s$ stand for the corresponding real pseudo-differential operators and \eqref{OPS Real} as in \eqref{Ss Real}. Similarly, when $m=1$ we denote these classes as  $\S^s$, $\S_0^s$, $\OP\S^s$ and $\OP\S_0^s$ respectively.

\begin{Remark} \label{OPS R T}
We write $\wp\in \S^s\big(\T^d\times \R^d;\mathbb C\big)$ if $\wp\in \S^s\big(\R_x^d\times \R_{\xi}^d;\mathbb C\big)$ and $\wp(\cdot,\xi)$ is periodic with period $2\pi$ for all $\xi$. 
Moreover, according to \cite[Theorem 5.2]{Ruzhansky-Turunen-2010-JFAA} (see also \cite[Corollary 2.11]{Delgado-2013-Chapter}), we see that if,
\begin{equation}
|\wp|^{\R^d\times\R^d}_{\beta,\alpha;s} \leq c_{\alpha \beta s}\ \ \text{for\ some}\ c_{\alpha \beta s}>0,\ \ |\alpha|<N_1,\ |\beta|< N_2,\ \ N_1,\, N_2\ge1,\label{wp R bound}
\end{equation}
then $\tt\wp=\wp\big|_{\mathbb{T}^n\times \mathbb{Z}^n}$ satisfies 
\begin{equation}
|\tt\wp|^{\T^d\times\Z^d}_{\beta,\alpha;s} \leq C_{\alpha \beta s}\ \ \text{for\ some}\ C_{\alpha \beta s}>0,  \ \ |\alpha|<N_1,\ |\beta|<N_2,\ \ N_1,\, N_2\ge1.\label{wp T bound}
\end{equation}
Conversely, every symbol $\tt\wp \in \S^s\big(\T^d\times \Z^d;\mathbb C\big)$ satisfies \eqref{wp T bound} is a restriction $\tt\wp=\wp\big|_{\mathbb{T}^n \times \mathbb{Z}^n}$ of a symbol $\wp \in \S^s\big(\T^d\times \R^d;\mathbb C\big)$, where $\wp$ satisfies \eqref{wp R bound}.
Therefore we see that
$\OP\S^s\big(\T^d\times \Z^d;\mathbb C\big)=\OP\S^s\big(\T^d\times \R^d;\mathbb C\big)$
and any bounded  set in 
$\OP\S^s\big(\T^d\times \Z^d;\mathbb C\big)$  coincides with the restriction of a bounded  set in  $\OP\S^s\big(\T^d\times \R^d;\mathbb C\big)$ (see also \cite[Theorem 2.10 and Corollary 2.11]{Delgado-2013-Chapter}). By considering each element in a matrix, this also holds true for matrix-valued symbol, i.e., $\OP\SS^s$. 
Moreover, the following introduced  Lemmas \ref{LOP2}-\ref{LOP5}, which are known  for pseudo-differential operators on $\R^d$, also hold for those on $\T^d.$ 
\end{Remark}

%
%
%
%
%
%

\subsubsection{Function spaces and related estimates}

Remember that $\mathbb K=\R$ or $\T$. Let $k\ge1$. The space $W^{k,\infty}(\mathbb K^d;\R^m)$ is the set of weakly differential functions $f:\mathbb K^d\to\R^m$ such that 
$$\|f\|_{W^{k,\infty}}:= \sum_{j=1}^m\sum_{0\le |\alpha|_1\le k}\|\pp^{\alpha} f_j\|_{L^\infty} <\infty.$$
Here, and in the following, the weak derivatives $\pp^{\alpha} f$ is understood in the sense of distribution:
$$\left\<\pp^{\alpha} f,g\right\>_{L^2}=(-1)^{|\alpha|_1}\left\< f,\pp^{\alpha} g\right\>_{L^2},\ \ g\in C_0^\infty(\mathbb R^d)\ \text{if}\ \mathbb K=\R\ \text{and}\ g\in C^\infty(\mathbb T^d)\ \text{if}\ \mathbb K=\T.$$
As before we denote by $\I$ the identity mapping. Then 
for any $s\in\R$, we define
\begin{equation}\label{Ds Lambda s define}
\D^s=\left(\I-\Delta\right)^{\frac s2}:=\mathscr F^{-1}\left((1+|\cdot|^2)^{\frac s 2}\I\right)\mathscr F\ \text{and}\ 
\Lambda^{s}=\left(-\Delta\right)^{\frac s2}:=\mathscr F^{-1}\left(|\cdot|^s\I\right)\mathscr F,
\end{equation} 
respectively, where the function $|\cdot|$ is defined on $\R^d$
if $\mathbb K=\R$, and on $ \Z^d$ if $\mathbb K=\T$.
These two operators  are self-adjoint  and pseudo-differential operators if $s\ge0$. For $s\ge0$, $d,m\ge1$, the Sobolev space $H^s(\mathbb R^d;\R^m)$ and $H^s(\mathbb T^d;\R^m)$ are defined as the completion of $C_0^\infty(\mathbb R^d;\R^m)$ and $C^\infty(\mathbb T^d;\R^m)$, respectively, under the inner product
$$\<X, Y\>_{H^s} := \<\D^s X, \D^s Y\>_{L^2}= \sum_{j=1}^m \int_{\mathbb K^d} \big[ (\D^s X_j)(\D^s Y_j) \big](x)\d x.$$
For $s\ge 0$ and $d\ge2$, we denote by $H^s_{{\rm div}}(\mathbb K^d;\R^d)$ a subset of $H^s(\mathbb K^d;\R^d)$ such that
\begin{equation}\label{H-div}
\left\{ \begin{aligned}
H^s_{{\rm div}}(\mathbb R^d;\R^d):=\,&\Big\{f\ : \ \nn\cdot f=0\Big\},\\ 
H^s_{{\rm div}}(\mathbb T^d;\R^d):=\,&
\bigg\{f\ : \ \nn\cdot f=0,\  \int_{\T^d} f(x)\d x=0\bigg\},
\end{aligned}\right.
\end{equation}
where $\nn\cdot f$ is also understood in the sense of distribution as mentioned above.
Notice that $H^s_{{\rm div}}(\mathbb K^d;\R^d)$ is a closed subspace of $H^s(\mathbb K^d;\R^d)$.  The Leray projection $\Pi_d$ on $\mathbb K^d$ is defined by
matrix-valued multiplier $\varPi$:
\begin{equation}\label{Pi-d define}
\Pi_d:=\mathscr{F}^{-1}\varPi \mathscr{F}, \ \  \varPi :=(\varPi_{i,j} )_{1\le i,j\le d},
\ \ \varPi_{i,j}(\xi):=\delta_{i,j}-\frac{\xi_i \xi_j}{|\xi|^2},\ \ 1\le i,j \le d,
\end{equation} 
where $ \xi\in\R^d$
if $\mathbb K=\R$,  $\xi \in\Z^d$ if $\mathbb K=\T$, and
$\delta_{i,j}$ is the Kronecker delta.

When $\mathbb K=\R$,  we have 
\begin{equation}\label{Pi-d bound}
\Pi_d\in \LL\big(H^s(\mathbb R^d;\R^d);H^s_{{\rm div}}(\mathbb R^d;\R^d)\big).
\end{equation}

When $\mathbb K=\T$, the Laplacian has a spectral gap.  To make it invertible  we restrict to the zero-average sub-space $H_0^s(\T^d;\R^d)$ of $H^s(\T^d;\R^d)$,
where in general,  
\begin{equation}\label{Pi-0 Hs}H_0^s(\T^d;\R^m):=\bigg\{f\in H^s(\T^d;\R^m):\ \int_{\T^d} f(x)\d x=0\bigg\},\ \ d, m\in\N.\end{equation} 
For any $m\ge 1$, the natural zero-average projection on $L^2(\T^d;\R^m)$ is 
\begin{equation}\label{Pi-0 define} 
\Pi_0 f(x):= 
f(x)- \frac{1}{(2\pi)^d}\int_{\T^d} f(y)\d y,\ \ x\in \T^d,\ f\in L^2(\T^d;\R^m).
\end{equation}
It is easy to see that  $\Pi_d$ defined in \eqref{Pi-d define} leaves  $H_0^s(\T^d;\R^d)$ invariant, so that  
\begin{equation}\label{Pi-0 bound}
\Pi_d\Pi_0\in \LL\big(H^s(\mathbb T^d;\R^d); H^s_{{\rm div}}(\mathbb T^d;\R^d)\big),  \end{equation} where 
$  H^s_{{\rm div}}(\mathbb T^d;\R^d)$ is in \eqref{H-div}.

When $d,m\in\mathbb N$ are fixed in the context, for $s\in\R$, $p\in [1,\infty]$, we will simply write
$$H^s=H^s(\mathbb K^d;\R^m),\ \ H^s_{{\rm div}}=H^s_{{\rm div}}(\mathbb K^d;\R^d),\ \ W^{1,\infty}=W^{1,\infty}(\mathbb K^d;\R^m), \ \ L^p=L^p(\mathbb K^d;\R^m).$$ 
Recall that for two nonnegative variables $A$ and $B$, $A\lesssim B$ means that, for some constant $c>0$, $A\le c B$ holds.
We have the following estimates on product of functions in $H^s$.

\begin{Lemma}[\cite{Bahouri-Chemin-Danchin-2011-book}]
\label{LOPN} 
For any $s\ge 0$, 
\begin{align*} 
\|fg\|_{H^s} \lesssim \|f\|_{L^\infty}\|g\|_{H^s}+\|g\|_{L^\infty}\|f\|_{H^s},\ \ f,g\in H^s\cap W^{1\infty}.\end{align*}
\end{Lemma}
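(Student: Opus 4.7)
The plan is to follow the paraproduct approach from Bahouri-Chemin-Danchin. Let $\{\Delta_j\}_{j\ge -1}$ denote the usual Littlewood-Paley blocks on $\mathbb K^d$ (so $\Delta_j u$ has Fourier support in $|\xi|\sim 2^j$ for $j\ge 0$) and set $S_j := \sum_{k\le j-1}\Delta_k$. Recall the equivalence $\|u\|_{H^s}^2 \sim \sum_{j\ge -1} 2^{2js}\|\Delta_j u\|_{L^2}^2$ for $s\ge 0$. I would start by writing Bony's decomposition
$$fg \;=\; T_f g + T_g f + R(f,g), \qquad T_f g := \sum_j S_{j-1}f \cdot \Delta_j g, \qquad R(f,g) := \sum_{|j-k|\le 1}\Delta_j f \cdot \Delta_k g,$$
and then estimating the three pieces separately.

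For each paraproduct, the summand $S_{j-1}f\cdot \Delta_j g$ has Fourier support in an annulus of size $\sim 2^j$, so the Littlewood-Paley square-function characterization of $H^s$ combined with $\|S_{j-1}f\|_{L^\infty}\le \|f\|_{L^\infty}$ (a consequence of Bernstein's inequality) yields
$$\|T_f g\|_{H^s}^2 \;\lesssim\; \sum_j 2^{2js}\|S_{j-1}f\|_{L^\infty}^2\|\Delta_j g\|_{L^2}^2 \;\lesssim\; \|f\|_{L^\infty}^2\|g\|_{H^s}^2,$$
and by symmetry $\|T_g f\|_{H^s}\lesssim \|g\|_{L^\infty}\|f\|_{H^s}$.

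For the remainder, each block $\Delta_j f\cdot\Delta_k g$ with $|j-k|\le 1$ has Fourier support in a ball of radius $\sim 2^j$, so only terms with $j\gtrsim \ell$ contribute to $\Delta_\ell R(f,g)$. I would symmetrize by splitting the double sum into $j\le k$ and $j>k$, bounding $\|\Delta_j f\|_{L^\infty}\lesssim \|f\|_{L^\infty}$ on one side and $\|\Delta_k g\|_{L^\infty}\lesssim \|g\|_{L^\infty}$ on the other, and then using $s\ge 0$ to sum the resulting discrete convolution kernel $2^{(\ell-j)s}\mathbf{1}_{\{j\ge \ell-N_0\}}$ via Young's inequality in $\ell^2(\mathbb Z)$, obtaining
$$\|R(f,g)\|_{H^s} \;\lesssim\; \|f\|_{L^\infty}\|g\|_{H^s} + \|g\|_{L^\infty}\|f\|_{H^s}.$$
Adding the three estimates produces the claimed bound. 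The main technical point is the remainder: one must symmetrize in $f$ and $g$ in order to distribute the $L^\infty$ and $H^s$ norms correctly, and one must use the sign condition $s\ge 0$ to make the tail $\sum_{j\ge \ell}2^{(\ell-j)s}$ summable uniformly in $\ell$ (at $s=0$ the bound trivializes to H\"older's inequality). The $W^{1,\infty}$ regularity stated in the hypothesis plays no role in the quantitative estimate and is only invoked to justify termwise convergence of the dyadic expansions.
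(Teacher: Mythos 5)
Your proposal is correct and coincides with the paper's own treatment: the paper does not prove Lemma \ref{LOPN} but cites \cite{Bahouri-Chemin-Danchin-2011-book}, and your Bony-decomposition argument (paraproducts via $\|S_{j-1}f\|_{L^\infty}\lesssim\|f\|_{L^\infty}$, remainder via the $\ell^1$-summability of $2^{(\ell-j)s}\mathbf 1_{\{j\ge \ell-N_0\}}$ for $s>0$, H\"older at $s=0$) is exactly the standard proof given there. Two minor remarks: for $s>0$ the remainder already obeys $\|R(f,g)\|_{H^s}\lesssim\|f\|_{L^\infty}\|g\|_{H^s}$ with no symmetrization needed, and you are right that $H^s\cap L^\infty$ suffices, the $W^{1,\infty}$ hypothesis being stronger than what the estimate requires.
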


For an operator $\A$, $\A^*$ stands for the adjoint operator of $\A$ in $L^2$.
The following two lemmas with single $\Q$ and a pair $(\Q_1,\Q_2)$ on $\R^d$ are well known in the literature, and they can be easily extended to bounded sets in $\OP\SS^r$, and to the case on $\T^d$ (see Remark \ref{OPS R T}).

\begin{Lemma}[see \cite{Abels-2012-Book,Taylor-1991-book,Alinhac-Gerard-2007-Book} for $\OP\S^r$ and \cite{Benzoni-Gavage-Serre-2007-Book} for $\OP\SS^r$] \label{LOP2} Let $r\in\R$ and $\Q\in \OP\SS^{r}$. Then $\Q^*\in \OP\SS^{r}$ and $\Q\in \LL(H^s;H^{s-r})$ for any $s\in\R$. Furthermore, 
for any bounded set $\mathscr O\subset \OP\SS^r$,
$$\sup_{\Q\in\mathscr O} \|\Q\|_{\LL(H^s;H^{s-r})}<\infty,\ \ \ s\in\R.$$

\end{Lemma}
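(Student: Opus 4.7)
The plan is to reduce everything to the $L^2$-boundedness theorem (Calder\'on-Vaillancourt) for $\OP\SS^0$, which states that any $\OP(\wp)$ with $\wp\in \SS^0$ extends to a bounded operator on $L^2$, with operator norm controlled by finitely many of the seminorms $|\wp|^{\R^d\times\R^d}_{\beta,\alpha;0}$. I would take this as a black-box input (together with the standard symbolic calculus for composition and adjoints), since it is well-documented in the references cited in the statement.

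For the adjoint claim, I would invoke the symbolic calculus: if $\Q=\OP(\wp)$ with $\wp\in\SS^r$, then $\Q^*=\OP(\wp^*)$ admits an asymptotic expansion
\[
\wp^*(x,\xi)\sim \sum_{\alpha\in\N_0^d} \frac{(-{\rm i})^{|\alpha|_1}}{\alpha!}\,\partial_\xi^\alpha \partial_x^\alpha \overline{\wp(x,\xi)^{T}},
\]
and each term in this expansion lies in $\SS^{r-|\alpha|_1}\subset\SS^r$, while the remainders belong to $\SS^{r-N}$ for arbitrary large $N$. Hence $\wp^*\in\SS^r$, and the reality condition \eqref{RE} is preserved by complex conjugation followed by differentiation, so $\Q^*\in\OP\SS^r$.

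For the Sobolev mapping property, the idea is to conjugate by Bessel potentials. Since $\D^{s-r}\in\OP\SS_0^{s-r}$ and $\D^{-s}\in\OP\SS_0^{-s}$ (with symbols $(1+|\xi|^2)^{(s-r)/2}\I$ and $(1+|\xi|^2)^{-s/2}\I$ respectively, both obviously in the corresponding symbol classes), the composition rule of pseudo-differential calculus gives
\[
\D^{s-r}\,\Q\,\D^{-s}\in \OP\SS^{0}.
\]
By Calder\'on-Vaillancourt this composition is bounded on $L^2$. Unwrapping the definition $\|f\|_{H^t}=\|\D^t f\|_{L^2}$, the boundedness on $L^2$ of $\D^{s-r}\Q\D^{-s}$ is exactly the statement that $\Q\colon H^s\to H^{s-r}$ is bounded.

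For the uniform estimate over bounded sets $\mathscr O\subset\OP\SS^r$, I would track the dependence of the above construction on the symbol. The asymptotic formulas for composition $\wp\#\wq$ express each seminorm $|\wp\#\wq|^{\R^d\times\R^d}_{\beta,\alpha;s_1+s_2}$ as a finite sum of products of seminorms of $\wp$ and $\wq$ of orders bounded in terms of $(\alpha,\beta)$. Applying this to the fixed symbols of $\D^{s-r}$ and $\D^{-s}$ and a variable $\wp\in\mathscr O$, each seminorm $|\sigma_{\wp}|^{\R^d\times\R^d}_{\beta,\alpha;0}$ of the composed symbol $\sigma_\wp$ of $\D^{s-r}\OP(\wp)\D^{-s}$ is dominated by finitely many seminorms of $\wp$ of order $r$, which are uniformly bounded by hypothesis. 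The quantitative Calder\'on-Vaillancourt bound then yields $\sup_{\wp\in\mathscr O}\|\D^{s-r}\OP(\wp)\D^{-s}\|_{\LL(L^2)}<\infty$, which is the desired uniform bound. The extension to the toroidal case follows from Remark \ref{OPS R T}, identifying toroidal symbols with restrictions of Euclidean ones.

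The main obstacle, conceptually, is the $L^2$-boundedness of $\OP\SS^0$ with \emph{quantitative} control by finitely many seminorms; however, since this is precisely the content of the Calder\'on-Vaillancourt theorem available in \cite{Abels-2012-Book,Taylor-1991-book,Alinhac-Gerard-2007-Book,Benzoni-Gavage-Serre-2007-Book}, the work reduces to the bookkeeping of how composition affects seminorms, which is routine in the symbolic calculus.
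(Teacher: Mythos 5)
Your proposal is correct: the paper offers no proof of Lemma \ref{LOP2} beyond citing the standard references, and your argument (adjoint via the symbolic calculus, Sobolev boundedness by conjugating with the Bessel potentials $\D^{s-r}$ and $\D^{-s}$ to reduce to $L^2$-boundedness of $\OP\SS^0$, uniformity by tracking the finitely many seminorms entering the composition and $L^2$ bounds, and the toroidal case via Remark \ref{OPS R T}) is precisely the standard proof found in those references. No gaps.
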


\begin{Lemma}[see  \cite{Abels-2012-Book,Taylor-1991-book,Alinhac-Gerard-2007-Book} for $\OP\S^r$ and \cite{Benzoni-Gavage-Serre-2007-Book,Taylor-1974-note} for $\OP\SS^r$]\label{LOP3} Let $r_i\in\R$ $(i=1,2)$, and let $\mathscr O_i\subset \OP\SS^{r_i} $ be bounded. Then 
$$\big\{\Q_1\Q_2: \Q_i\in \mathscr O_i, i=1,2\big\}\subset \OP\SS^{r_1+r_2}$$ is bounded as well. If $\mathscr O_1$ and $\mathscr O_2$ have commuting matrices, then
$$
\big\{ [\Q_1,\Q_2]:\Q_i\in \mathscr O_i, i=1,2\big\}\subset \OP\SS^{r_1+r_2-1}
$$
is also bounded.
\end{Lemma}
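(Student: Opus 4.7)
The plan is to invoke the classical symbolic calculus of pseudo-differential operators. Writing $\Q_i = \OP(\wp_i)$ with $\wp_i \in \SS^{r_i}$ on $\R^d$, the key identity is the asymptotic composition formula
\begin{equation*}
\wp_1 \# \wp_2(x,\xi) \sim \sum_{\alpha \in \N_0^d} \frac{(-{\rm i})^{|\alpha|_1}}{\alpha!}\,\partial_\xi^\alpha \wp_1(x,\xi)\, \partial_x^\alpha \wp_2(x,\xi),
\end{equation*}
which realizes $\Q_1\Q_2 = \OP(\wp_1 \# \wp_2)$. For the product statement, I would first check that the leading term $\wp_1\wp_2$ lies in $\SS^{r_1+r_2}$: applying Leibniz to $\partial_x^\beta \partial_\xi^\alpha(\wp_1\wp_2)$ and splitting $\alpha = \alpha_1+\alpha_2$, $\beta = \beta_1+\beta_2$ yields a finite sum controlled by products of seminorms of the factors multiplied by $(1+|\xi|)^{r_1+r_2-|\alpha|_1}$. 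The tail of the asymptotic expansion is estimated via the exact oscillatory-integral form of $\wp_1 \# \wp_2$ and integration by parts in the phase variable, showing that truncating at $|\alpha|_1 \le N$ leaves a remainder in $\SS^{r_1+r_2-N-1}$; the scalar argument carries over entry-by-entry to the matrix case since $|\cdot|_{m\times m}$ is comparable to the maximum of entry absolute values. Uniformity over the bounded sets $\mathscr O_i$ is then automatic because each seminorm of $\wp_1\#\wp_2$ is bounded by finitely many seminorms of $\wp_1, \wp_2$, which are uniformly finite by assumption.

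For the commutator, subtracting the composition formulas for $\Q_1\Q_2$ and $\Q_2\Q_1$ gives
\begin{equation*}
[\wp_1,\wp_2]_\# \sim (\wp_1\wp_2 - \wp_2\wp_1) - {\rm i}\sum_{j=1}^d\bigl(\partial_{\xi_j}\wp_1\,\partial_{x_j}\wp_2 - \partial_{\xi_j}\wp_2\,\partial_{x_j}\wp_1\bigr) + R,
\end{equation*}
with $R$ collecting the $|\alpha|_1 \ge 2$ contributions and lying in $\SS^{r_1+r_2-2}$. The pointwise commutation hypothesis on the matrix values kills the zeroth-order term identically, and each summand at $|\alpha|_1 = 1$ loses exactly one order in $\xi$, placing the principal symbol in $\SS^{r_1+r_2-1}$. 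The Leibniz argument from the product step then yields uniform seminorm control of $[\wp_1,\wp_2]_\#$ over $\mathscr O_1 \times \mathscr O_2$.

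The main obstacle I expect is the bookkeeping: identifying precisely which finite collection of seminorms of $\wp_1$ and $\wp_2$ governs each seminorm of the composed symbol (in particular for the remainder in the commutator) so that uniformity over bounded families is manifest rather than buried in the oscillatory-integral estimates. For the toroidal case $\mathbb K = \T$, by Remark \ref{OPS R T} any bounded subset of $\OP\SS^{r_i}(\T^d \times \Z^d)$ is the restriction of a bounded subset of $\OP\SS^{r_i}(\T^d \times \R^d)$; composing on the Euclidean side and restricting to integer frequencies transfers the result, and the toroidal composition formula developed by Ruzhansky and Turunen produces the same leading terms with an analogous summation-by-parts estimate, so the argument goes through \emph{mutatis mutandis}.
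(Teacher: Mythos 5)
Your proposal is correct: the paper does not prove this lemma at all but cites it as a known consequence of the symbolic calculus (composition formula, continuity of $\#$ in the symbol seminorms, and the extension/restriction correspondence of Remark \ref{OPS R T} for the toroidal case), and your outline reproduces exactly that standard argument, including the key observation that the pointwise commutation hypothesis cancels the zeroth-order term $\wp_1\wp_2-\wp_2\wp_1$ so that the commutator symbol starts at order $r_1+r_2-1$. The uniformity over bounded families follows, as you say, from the fact that each seminorm of $\wp_1\#\wp_2$ is controlled by finitely many seminorms of $\wp_1$ and $\wp_2$, which is precisely the content of the cited references.
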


For two operators $\A$ and $\BB$, $[\A,\BB]:=\A\BB-\BB\A$. The original statements of the following two lemmas are for a single operator $\Q\in \OP\S^s$ and for functions   on  $\R^d$, where 
the constant $C$ depends on seminorms of $\Q$. 
Thus, it can be shown that these estimates  hold 
uniformly for $\Q$ in a bounded set of $\OP\S^s.$ Again, by Remark \ref{OPS R T}, they also hold true for functions  on $ \T^d$.

\begin{Lemma} [Proposition 3.6.A in \cite{Taylor-1991-book}]\label{LOP4} Let $ s>0$ and $\mathscr O\subset \OP\S^{s}$ be bounded. Then for any $\sigma\ge0$, $g\in H^{s+\sigma}\cap W^{1,\infty},\ u\in H^{s-1+\sigma}\cap L^{\infty}$,
\begin{equation*}
\sup_{\Q\in \mathscr O}\left\|\left[\Q,g\I\right] u\right\|_{H^{\sigma}} \lesssim \|g\|_{W^{1,\infty}}\|u\|_{H^{s-1+\sigma}}+ \|g\|_{H^{s+\sigma}}\|u\|_{L^{\infty}}.
\end{equation*}
\end{Lemma}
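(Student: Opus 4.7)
The plan is first to reduce to the case of a single operator $\Q\in\OP\S^s$. The implicit constant in the estimate will depend on only finitely many seminorms $|\wp|^{\R^d\times\R^d}_{\beta,\alpha;s}$ of the symbol $\wp$ of $\Q$, and the boundedness of $\mathscr O$ in $\OP\S^s$ (see \eqref{Ss Real}) means all such seminorms are uniformly bounded over $\Q\in\mathscr O$. Hence the uniform estimate follows once the pointwise estimate is established with a constant that is a polynomial in finitely many seminorms of $\wp$. By Remark \ref{OPS R T} we may work on $\R^d$; the case $\T^d$ follows from the same proof.

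For a fixed $\Q=\OP(\wp)$, I would perform a Littlewood--Paley decomposition. Let $\{\Delta_j\}_{j\ge -1}$ be a dyadic frequency localization with low-frequency cut-off $S_j=\sum_{k<j}\Delta_k$, and use Bony's paraproduct decomposition
\begin{equation*}
gu = T_g u + T_u g + R(g,u), \qquad T_g u := \sum_j S_{j-2}g\cdot\Delta_j u.
\end{equation*}
Then
\begin{equation*}
[\Q, g\I]u = [\Q, T_g]u + \bigl\{\Q(T_u g) - T_{\Q u}g\bigr\} + \bigl\{\Q R(g,u) - R(g,\Q u)\bigr\} + \bigl\{T_{\Q u}g - (\text{adjustment})\bigr\},
\end{equation*}
where the bracketed ``adjustment'' terms account for the rewriting needed on the right-hand side. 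The main piece is the paraproduct commutator $[\Q, T_g]u=\sum_j [\Q, S_{j-2}g]\Delta_j u$, in which the frequency of $S_{j-2}g$ is much smaller than $2^j$. For each block, a Taylor expansion of the kernel of $\Q$ in the spatial variable, together with the fact that $\wp\in\S^s$, yields
\begin{equation*}
\bigl\|[\Q, S_{j-2}g]\Delta_j u\bigr\|_{H^\sigma} \lesssim \|\nabla g\|_{L^\infty}\cdot 2^{j(s-1+\sigma)}\|\Delta_j u\|_{L^2}.
\end{equation*}
Summing in $j$ via Plancherel and the almost-orthogonality of the $\Delta_j$ gives the contribution $\|g\|_{W^{1,\infty}}\|u\|_{H^{s-1+\sigma}}$. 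For the two remainder-type expressions $\Q(T_u g) - T_{\Q u}g$ and $\Q R(g,u) - R(g,\Q u)$, an argument in the same spirit—using Lemma \ref{LOP2} to bound $\Q$ in $\LL(H^{s-1+\sigma};H^\sigma)$ together with the Moser-type product estimate of Lemma \ref{LOPN}—yields a bound of the form $\|g\|_{H^{s+\sigma}}\|u\|_{L^\infty}+\|g\|_{W^{1,\infty}}\|u\|_{H^{s-1+\sigma}}$, since $T_u g$ and $R(g,u)$ put all the ``high frequencies'' on $g$.

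The main obstacle will be the paraproduct commutator $[\Q, T_g]u$ when $s$ is small (especially $0<s\le 1$), where the formal symbolic expansion $[\Q,g\I]\approx\sum_{|\alpha|\ge 1}\frac{(-i)^{|\alpha|}}{\alpha!}\OP(\partial_\xi^\alpha\wp)\,\partial_x^\alpha g$ is too crude to extract a clean gain of one derivative on $g$, because the remainder of the expansion is not obviously lower order. The Littlewood--Paley/paradifferential framework is designed precisely to handle this regime: the commutator $[\Q, S_{j-2}g]$ acting on a function frequency-localized at $2^j$ gains exactly one power of $2^{-j}$ relative to $\Q$ itself, and this gain is what produces the factor $\|\nabla g\|_{L^\infty}$ rather than $\|g\|_{L^\infty}$ on the right. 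Once this point is settled, assembling the pieces is routine.
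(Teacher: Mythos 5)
The paper does not prove Lemma \ref{LOP4}: it imports the single-operator estimate from Taylor \cite{Taylor-1991-book} (Proposition 3.6.A) and adds exactly the two observations in your opening paragraph, namely that the constant depends only on finitely many seminorms of the symbol (hence is uniform over a bounded $\mathscr O\subset\OP\S^{s}$) and that Remark \ref{OPS R T} transfers the estimate to $\T^d$. You reproduce both reductions correctly, and your Littlewood--Paley sketch of the core estimate is, in outline, Taylor's own proof (Bony decomposition, paraproduct commutator carrying the $\|\nabla g\|_{L^\infty}$ term, remainder operators carrying the $\|g\|_{H^{s+\sigma}}\|u\|_{L^\infty}$ term), so the route is the same rather than genuinely different.

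Two concrete points on the sketch itself. First, the decomposition is exact with three groups and no ``adjustment'': from $gu=T_gu+T_ug+R(g,u)$ and $g\Q u=T_g\Q u+T_{\Q u}g+R(g,\Q u)$ one gets $[\Q,g\I]u=[\Q,T_g]u+\{\Q(T_ug)-T_{\Q u}g\}+\{\Q R(g,u)-R(g,\Q u)\}$; your fourth bracket is spurious and suggests the algebra was not carried out. Second, the per-block bound $\|[\Q,S_{j-2}g]\Delta_ju\|_{H^\sigma}\lesssim\|\nabla g\|_{L^\infty}2^{j(s-1+\sigma)}\|\Delta_ju\|_{L^2}$, together with the summation in $j$ (nontrivial because for $x$-dependent symbols the outputs are only approximately frequency-localized), is the entire content of the lemma and is asserted rather than proved; likewise the remainder terms require genuine Besov mapping properties of paraproducts (e.g.\ $T_a:H^{s+\sigma}\to H^{\sigma}$ for $a=\Q u\in B^{-s}_{\infty,\infty}$ with norm controlled by $\|u\|_{L^\infty}$), not merely Lemma \ref{LOP2} combined with Lemma \ref{LOPN}. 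Since the paper itself delegates all of this to the citation, these gaps do not affect the role the lemma plays here, but they would have to be filled to call your sketch a proof.
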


\begin{Lemma}[Proposition 4.2 in \cite{Taylor-2003-PAMS}]\label{LOP5} 
Let $s\ge 0$ and $\mathscr O\subset \OP\S^{s}$ be bounded. Then for any $\si>1+\frac d 2$ and $q\in [0, \si-s]$, 
$$
\sup_{\Q\in \mathscr O} \left\|\left[\Q, g\I\right] u\right\|_{H^{q}} \lesssim \|g\|_{H^{\si}}\|u\|_{H^{q+s-1}},\ \ g\in H^{s},\ u\in H^{q+s-1}.$$
\end{Lemma}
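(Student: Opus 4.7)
The plan is to adapt the standard paradifferential proof in a way that exhibits the implicit constant as a polynomial in finitely many seminorms of $\Q$ in the Fr\'echet topology of $\OP\S^s$; this upgrades the single-operator statement in \cite{Taylor-2003-PAMS} to the uniform bound over any bounded set $\mathscr O$. By Remark \ref{OPS R T}, it suffices to treat $\mathbb K = \R$, since a bounded set in $\OP\S^s(\T^d\times\Z^d;\mathbb C)$ arises as the restriction of a bounded set in $\OP\S^s(\T^d\times\R^d;\mathbb C) \subset \OP\S^s(\R^d\times\R^d;\mathbb C)$.

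Fixing $\Q\in\mathscr O$, I would apply Bony's paraproduct decomposition to both $gu$ and $g(\Q u)$, namely $gu = T_g u + T_u g + R(g,u)$ and $g(\Q u) = T_g(\Q u) + T_{\Q u} g + R(g, \Q u)$, and subtract to obtain
$$[\Q, g\I]u = [\Q, T_g]u + \Q(T_u g) + \Q R(g,u) - T_{\Q u}\,g - R(g, \Q u).$$
The principal piece is the paradifferential commutator $[\Q, T_g]$: since $T_g$ is a paradifferential operator of order zero whose relevant symbol seminorms are controlled by $\|g\|_{W^{1,\infty}}$, Lemmas \ref{LOP2}--\ref{LOP3} yield $[\Q, T_g] \in \OP\S^{s-1}$ with $\LL(H^{q+s-1}; H^q)$-norm bounded by $\|g\|_{W^{1,\infty}}$ times a fixed finite collection of seminorms of $\Q$; the embedding $H^\sigma \hookrightarrow W^{1,\infty}$ (valid for $\sigma > 1 + d/2$) then absorbs $\|g\|_{W^{1,\infty}}$ into $\|g\|_{H^\sigma}$. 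Each of the four lower-order terms $\Q(T_u g)$, $\Q R(g,u)$, $T_{\Q u}g$ and $R(g,\Q u)$ is handled by combining the standard Sobolev paraproduct/remainder estimates with Lemma \ref{LOP2} applied to $\Q$; the condition $q \in [0, \sigma - s]$ is exactly what is needed to keep these terms inside the Sobolev range where such estimates apply.

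The main obstacle is the bookkeeping: one must verify that every constant appearing in the four lower-order estimates factorizes as $\|g\|_{H^\sigma}\|u\|_{H^{q+s-1}}$ times a seminorm $|\Q|^{\R^d\times\R^d}_{\beta,\alpha;s}$ whose indices $|\alpha|, |\beta|$ are controlled by $s, \sigma, q$ and $d$ alone. The endpoint $q = \sigma - s$ is forced by the remainder term $R(g,u)$, which gains Sobolev regularity only up to $\sigma + (q+s-1) - d/2$; the condition $\sigma > 1 + d/2$ is exactly what guarantees that this gain is at least $q + s$, so that $\Q R(g,u) \in H^q$ by Lemma \ref{LOP2}. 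Once the constants are traced to finitely many seminorms of $\Q$, uniformity over $\mathscr O$ follows immediately from the definition of a bounded subset of the Fr\'echet space $\OP\S^s$.
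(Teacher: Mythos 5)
The paper does not actually prove this lemma: it is quoted as Proposition 4.2 of \cite{Taylor-2003-PAMS}, and the only additions the authors make are the remarks immediately preceding the statement --- that the constant in Taylor's single-operator estimate depends on finitely many symbol seminorms of $\Q$, whence the supremum over a bounded set $\mathscr O$, and that the toroidal case follows from the symbol-extension argument of Remark \ref{OPS R T}. Your proposal therefore does strictly more than the paper by reconstructing the proof; the reconstruction follows what is essentially Taylor's own route (Bony decomposition of $gu$ and of $g(\Q u)$, isolation of the paradifferential commutator $[\Q,T_g]$, paraproduct and remainder estimates for the four lower-order terms), and the outline, including the reduction of uniformity over $\mathscr O$ to finitely many seminorms and the reduction of $\T^d$ to $\R^d$, is sound.

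Two inaccuracies should be repaired. First, Lemma \ref{LOP3} cannot be invoked for $[\Q,T_g]$: that lemma concerns commutators of operators with \emph{smooth} symbols in the classes $\OP\SS^{r_i}$, whereas the symbol of $T_g$ for $g\in W^{1,\infty}$ has only one controlled $x$-derivative and does not lie in $\S^0$. The correct tool is the paradifferential symbol calculus (e.g.\ Proposition 3.6.A of \cite{Taylor-1991-book}, i.e.\ the mechanism behind Lemma \ref{LOP4}), which does give $\|[\Q,T_g]\|_{\LL(H^{q+s-1};H^{q})}\lesssim \|g\|_{W^{1,\infty}}$ with a constant depending on finitely many seminorms of $\Q$. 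Second, the constraint $q\le\si-s$ is not forced by the remainder $R(g,u)$: one has $R(g,u)\in H^{\si+q+s-1-\frac d2}$ with no cap at $H^{\si}$, so $\Q R(g,u)\in H^{q}$ already under $\si>1+\frac d2$. The constraint comes from $\Q(T_ug)$: when $q+s-1\ge\frac d2$ the paraproduct only gives $T_ug\in H^{\si}$, hence $\Q(T_ug)\in H^{\si-s}$, and $\si-s\ge q$ is exactly what is needed there. Neither point invalidates the strategy, but both would have to be fixed in a complete write-up.
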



We also recall the Friedrichs mollifier $J_n$ on $\mathbb K^d$. Let $\phi\in\mathscr{S}(\R^d;\R)$ (the Schwarz space of rapidly decreasing $C^\infty$ functions on $\R^d$) satisfy $0\leq\phi(y)\leq1$ for all $y\in \R^d$ and $\phi(y)=1$ for any $|y|\leq1$,  and let 
$$\phi_n(\xi):=\phi_n(\xi/n),\ \ n\ge 1$$  for $\xi\in\R^d$ when $\mathbb K= \R$, and $\xi\in \Z^d$ when $\mathbb K= \T$. 
Then the Friedrichs mollifier $\{J_n\}_{n\ge 1}$  is defined as 
\begin{align}
J_n=\OP\big(\phi_n),\ \ \ \ n\ge1.
\label{JN} 
\end{align}
Of course, one may choose different $J_n$ in different situations, but this is a simple and uniform choice for SPDEs considered in the paper.

Finally, to conclude this part, in the following lemma we summarize some frequently used properties regarding the above operators (cf. \cite{Li-Liu-Tang-2021-SPA,Tang-Yang-2022-AIHP,Taylor-1981-Book}):

\begin{Lemma}\label{LJN} 
Let $\tt\Pi$, $(\D^s,\Lambda^{s})$ and $J_n$ be given in \eqref{tt Pi}, \eqref{Ds Lambda s define} and \eqref{JN}, respectively. Then
for any $1\le j\le d$,  the following properties hold. 
\begin{enumerate}  
\item[$(1)$] For all $\si\ge0$ and $f\in H^\si$, $\displaystyle\sup_{n\ge 1} \|J_n\|_{\mathcal L(L^\infty;L^\infty)}<\infty$, $\|J_n\|_{\mathcal L(H^\si;H^\si)}=1$ $(n\ge1)$ and 
$\displaystyle\lim_{n\to\infty}  \|J_n f-f\|_{H^\si}=0$.  For any  $s_1,s_2\ge 0$, 
$$\|J_n\|_{\LL(H^{s_1};H^{s_2})} \lesssim n^{(s_2-s_1)^+},\  \|J_l-J_n\|_{\LL(H^{s_1};H^{s_2})}\lesssim (l\land n)^{-(s_1-s_2)^+}, \ \  l,n\ge 1.$$ 
\item[$(2)$] For all $s,\si\in\R$ and $n\ge1$, $\tt\Pi$, $\mathcal D^s$, $\Lambda^\si$ and $J_n$ are self-adjoint in $L^2$  and  
$$
[T, \tt T]=[T, \pp_{j}]=0,\ \ T,\, \tt T\in\{\tt\Pi, \D^s, \Lambda^\si, J_n\}_{s,\si\in\R;n\ge1},\ \ 1\le j\le d.$$
\item[$(3)$] For all $g\in W^{1,\infty}$ and $ f\in L^2$,
$$\sup_{n\ge 1} \big\|[J_n, g\I]\nn f\big\|_{L^2}\lesssim \|g\|_{W^{1,\infty}}\|f\|_{L^2},\ \ g\in W^{1,\infty}.$$  \end{enumerate} 

\end{Lemma}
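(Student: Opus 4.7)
The plan is to treat all three parts via Fourier-multiplier representations, since $\tt\Pi$, $\D^s$, $\Lambda^\si$, $\pp_j$ and $J_n$ are all multipliers whose symbols are (up to $\tt\pi$) scalar-valued, and only the last commutator estimate requires genuine work through a Friedrichs-type kernel argument.

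For part (1), I would work entirely on the Fourier side. Writing the kernel of $J_n$ as $K_n(x)=n^d K(nx)$ with $K=\mathscr F^{-1}\phi\in L^1\cap \mathscr S$, a change of variables gives $\|K_n\|_{L^1}=\|K\|_{L^1}$, so $\sup_n\|J_n\|_{\LL(L^\infty;L^\infty)}\lesssim\|K\|_{L^1}$ by Young. Since $0\le\phi_n\le 1$, Plancherel yields $\|J_n\|_{\LL(H^\si;H^\si)}\leq 1$, with equality obtained by testing on functions whose Fourier transform is supported in $\{|\xi|\le n\}$ (where $\phi_n\equiv 1$). The strong convergence $J_nf\to f$ in $H^\si$ follows from dominated convergence applied to $(1+|\xi|^2)^\si(1-\phi_n(\xi))^2|\widehat f(\xi)|^2$. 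For the mixed-Sobolev bounds, I would use that $\phi_n$ (and $\phi_l-\phi_n$) are essentially supported in $\{|\xi|\lesssim n\}$ (resp.\ $\{|\xi|\gtrsim l\land n\}$), then estimate the multiplier pointwise by the appropriate power of $n$ or $(l\land n)^{-1}$.

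For part (2), self-adjointness is immediate: $\tt\Pi$ is self-adjoint by \eqref{tt Pi}; $\D^s$, $\Lambda^\si$, $J_n$ have real, even scalar symbols in the $\xi$ variable, so they are self-adjoint on $L^2$. The operators in $\{\D^s,\Lambda^\si,J_n,\pp_j\}$ all act as scalar Fourier multipliers (times the identity matrix), and $\tt\Pi$ acts as multiplication by the matrix $\tt\pi(\xi)$; scalar multiplication commutes with matrix multiplication pointwise in $\xi$, so all commutators vanish. I would state this plainly and invoke Plancherel/the toroidal Fourier series.

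Part (3) is the only genuinely nontrivial piece. I would write
\[
[J_n,g\I]\nn f(x)=\int K_n(x-y)\bigl(g(y)-g(x)\bigr)\nn f(y)\,\d y,
\]
integrate by parts in $y$, and split the result as
\[
-\int (\nn K_n)(x-y)\bigl(g(y)-g(x)\bigr) f(y)\,\d y-\int K_n(x-y)(\nn g)(y)f(y)\,\d y.
\]
For the second integral, $\|g\|_{W^{1,\infty}}(|K_n|*|f|)$ is bounded in $L^2$ by $\|g\|_{W^{1,\infty}}\|K\|_{L^1}\|f\|_{L^2}$ via Young. The main point, and the only delicate step, is the first integral: using $|g(x)-g(y)|\le\|g\|_{W^{1,\infty}}|x-y|$ and the scaling $(\nn K_n)(z)=n^{d+1}(\nn K)(nz)$, the factor of $|x-y|$ cancels exactly one power of $n$, giving the convolution kernel $|z||\nn K(z)|$, which is integrable because $K\in\mathscr S$. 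Young's inequality then yields the claimed bound uniformly in $n$. The main obstacle is ensuring uniformity in $n$, which is precisely what this cancellation of $n$ guarantees. On the torus the same proof runs via the periodized kernel, using Remark \ref{OPS R T} to transfer the estimate, so no separate argument is needed.
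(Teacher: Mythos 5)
Your proof is correct and is the standard argument: the paper does not actually prove Lemma \ref{LJN} but simply cites \cite{Li-Liu-Tang-2021-SPA,Tang-Yang-2022-AIHP,Taylor-1981-Book}, and the Plancherel/multiplier computations you give for (1)--(2) together with the Friedrichs commutator argument for (3) (integrate by parts, use $|g(x)-g(y)|\le\|\nabla g\|_{L^\infty}|x-y|$ to absorb the extra power of $n$ coming from $\nabla K_n$, then apply Young with the $n$-independent kernel $|z|\,|\nabla K_n(z)|$) are exactly what those references contain. The only points worth polishing are that the difference estimate $\|J_l-J_n\|_{\LL(H^{s_1};H^{s_2})}\lesssim(l\land n)^{-(s_1-s_2)^+}$ is meaningful (and true) only for $s_1\ge s_2$, where it follows, as you say, from $\phi(\xi/l)-\phi(\xi/n)$ vanishing on $\{|\xi|\le l\land n\}$, and that in (3) the integration by parts should first be carried out for $f\in C_0^\infty$ and then extended by density, since $\nabla f$ is a priori only a distribution when $f\in L^2$.
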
 

\subsection{Assumptions and main results}

Recall that $\mathcal A^*$ 
is the $L^2$-adjoint operator of a linear operator $\mathcal A$. For $\mathbb K=\R$ or $\T$ and $d,m\ge1$,  we  simplify notation by letting
\begin{equation}\label{tt Hs}
\tt H^s:=\tt\Pi H^s,\ \ H^s=H^s(\mathbb K^d;\R^m),\ \ s\ge0.
\end{equation}
Because $\tt\Pi$ is a projection, for $X,\, Y\in \tt H^s$ with $s\ge 0$, we have
\begin{equation}\label{tt X Hs}
\<X, Y\>_{\tt H^s}:= \<\tt\Pi X,\tt\Pi Y\>_{H^s}=\<X, Y\>_{H^s},\ \ 
\|X\|^2_{\tt H^s}:=\<X,X\>_{\tt H^s}=\|X\|^2_{H^s}.
\end{equation}

We first introduce the assumption on $\{\mathcal  A_k\}_{k\ge 1} $ included in the noise coefficients of \eqref{EN}. As extension of skew-adjoint operators $\{\pp_i\}_{1\le i\le d}$   in the transport noise \eqref{known transport noise},  
each $\A_{k}$ is not far away from
anti-symmetric, and contains a higher order operator with symbol independent of $x$ and a lower order operator with symbol depending on $x$.

\begin{Assumption}\label{Ak-assum} Let    $\tt\Pi$ be a projection operator in $L^2$  given in \eqref{tt Pi}
and \eqref{tt Pi pi0}. Let  $a_k, q_k\in \R$, $r_1\in [0,1]$,  $r_2\ge r_1$. For $k\ge 1$, we let 
\begin{equation}\label{Ak define}
\A_k=a_k \J_k+q_k \K_k,\ \ \J_k:={\rm diag} (\J_{k,1},\cdots,\J_{k,m}),\ \ \K_k:={\rm diag} (\K_{k,1},\cdots,\K_{k,m}), 
\end{equation}
where $\{\J_{k,i}\}_{k\ge1}\in\OP\S_0^{r_2}$ and $\{\K_{k,i}\}_{k\ge1}\in\OP\S^{r_1}$ $(1\le i\le m)$ are bounded, respectively.
We assume that the following conditions hold:
\begin{enumerate}[label={ $\bf (D_\arabic*)$}]
\item\label{Ak-assum-akbk} $\{a_{k}\}_{k\ge1}, \{q_k\}_{k\ge1}\in l^2$ and $a_kq_k=0$ for $k\ge 1$.
\item\label{Ak-assum-JK}  There exist operators  $\TT_k:={\rm diag} (\TT_{k,1},\cdots,\TT_{k,m})$ and $\Q_k:={\rm diag} (\Q_{k,1},\cdots,\Q_{k,m})$ such that $\{\mathcal{T}_{k,i}\}_{k\ge1}\subset \OP\S_0^{0}$ and $\{\Q_{k,i}\}_{k\ge1}\subset \OP\S^{0}$ $(1\le i\le m)$ are bounded, respectively, 
and 
\begin{equation*}
\J_{k}^*=\TT_{k}-\J_{k},\ \  \K_{k}^*=\Q_{k}-\K_{k}, 
\ \ 
1\le i\le m,\  k\ge1.
\end{equation*} 
\item \label{Ak-assum-Pi} Either $\tt\Pi={\rm diag}\{\tt\Pi_1,\cdots,\tt\Pi_m\}$ with $\tt\Pi_i\in \OP\S_0^0$ $(1\le i\le m)$, or 
\begin{equation}\label{tt Pi U}
\tt\Pi\mathcal U\big|_{\tt H^s} =\mathcal U\big|_{\tt H^s} \ \text{with}\   s\ge r_0:=\max\Big\{
r_2{\bf 1}_{\{\|a_k\|_{l^2}>0\}},\ r_1{\bf 1}_{\{\|q_k\|_{l^2}>0\}}\Big\} \end{equation}  
holds for any $\mathcal{U}\in\left\{\J_k,\, \TT_k,\, \K_k,\, \Q_k:\ k\ge 1\right\}.$ 
\end{enumerate}
\end{Assumption}

Then we assume that the operator $-\EE$ is a positive semi-definite operator satisfying the following assumption:
\begin{Assumption}\label{Assum-E}
Assume that there exists
$\G_j\in \OP\S_0^{p_j}$ with $p_j\ge0$ and $1\le j\le m$  such that
for $p_0:= \max\big\{p_1,\cdots,p_m \big\}$,
\begin{equation*}
\EE\in\OP\SS_0^{2p_0},\ \ 
-\left\<\EE X,X\right\>_{L^2}= \|\G X\|^2_{L^2},\ \ X\in H^{2p_0},\ \ \G={\rm diag}(\G_1,\cdots,\G_m).
\end{equation*}
\end{Assumption}

Usually, if  $\G_j\neq0$ with $1\le j\le m$, then $-\EE$ is positive definite operator. In this work, we allow $\G_{j}=0$ for some $j$ to cover degenerate cases. Hence the case of degenerated diffusion in fluid models can be covered. For instance, degenerated diffusion appears in boundary layer equations.

Finally, 
we introduce the following assumptions on the regular noise coefficients $\tt h_k: [0,\infty)\times H^s\to H^s$ and the drifts $(\tt b,\tt g):\tt H^s\to \tt  H^s$ given by nonlinear PDEs, where $\tt H^s$ is given in \eqref{tt Hs}. 

\begin{Assumption}\label{Assum-hbg}
Assume that the following conditions hold for some 
\begin{equation}\label{theta 0}
\theta_1,\theta_2\ge0,\ q_0>0,\ s_0>\theta_0+q_0 \ \text{with}\ \theta_0:=\max\{\theta_1,\theta_2\},
\end{equation}
some function $K\in\mathscr K$ in \eqref{SCRK}  and an increasing function 
$\tt K:[0,\infty)\rightarrow (0,\infty).$
\begin{enumerate} [label={ $({\bf F}_h)$}]
\item\label{Assum-h}  
Let  $\si\in\{\theta_0,s_0\}$. 
$\tt h_k:[0,\infty)\times H^{s_0}\to H^{s_0}$  satisfies that for all $t\ge0$ and $X,\, Y\in  H^{s_0}$,
\begin{align*}
\sum_{k\ge1}\|\tt h_k(t,X)\|^2_{H^{s_0}}
\leq\, K&(t,\|X\|_{H^{\theta_1}})(1+\|X\|^2_{H^{s_0}}),\\
\sum_{k=1}^\infty \| \tt h_k(t,X)- \tt h_k(t, Y)\|_{H^{\si}}^2 
\le\, &K(t, \|X\|_{H^{s_0}}+\|Y\|_{ H^{s_0}})\|X-Y\|^2_{H^{\si}}.
\end{align*} 
\end{enumerate} 
\begin{enumerate}[label={ $({\bf F}_b)$}]
\item\label{Assum-b} Let  $\si\in\{\theta_0,s_0\}$.     $\tt b:\tt H^{s_0}\to\tt H^{s_0}$  satisfies   that for all $X,\, Y\in \tt H^{s_0}$ and $N>0$,
\begin{equation*}
\|\tt b(X)\|_{H^{s_0}}\leq\, \tt K\big(\|{X}\|_{H^{\theta_2}}\big)\|X\|_{H^{s_0}},
\end{equation*}
\begin{equation*}
\sup_{\|X\|_{H^{s_0}},\|Y\|_{H^{s_0}}\le N}{\bf 1}_{\{X\ne Y\}}  
\frac{\|\tt b(X)-\tt b(Y)\|_{H^{\si}}}{\|X-Y\|_{H^{\si}}}\leq \tt K(N).
\end{equation*}
\end{enumerate}

\begin{enumerate}[label={ $({\bf F}_g)$}]
\item\label{Assum-g}  $\tt g:\tt H^{s_0}\to\tt H^{s_0-q_0}$ satisfies   for all $X,\, Y\in \tt H^{s_0}$ and $N>0$ that 
\begin{equation*}
\|\tt g(X)\|_{H^{s_0-q_0}}\le \tt K(\|X\|_{H^{s_0}}),
\end{equation*}
\begin{equation*}
\sup_{\|X\|_{H^{s_0}},\|Y\|_{H^{s_0}}\le N}{\bf 1}_{\{X\ne Y\}} 
\frac{\|J_n\tt g( J_n X)-J_n\tt g( J_n Y)\|_{H^{s_0}}}{\|X-Y\|_{H^{s_0}}} 
<\infty,
\end{equation*}
\begin{equation*}
\sup_{n\ge 1} \Big\{\big|\<\tt g(J_nX), J_n X\>_{H^{s_0}}\big|
+\big|\<J_n\tt g(X), J_n X\>_{H^{s_0}}\big|\Big\}\le \tt K(\|X\|_{H^{\theta_2}})\|X\|_{H^{s_0}}^2.
\end{equation*}
Moreover, 
there exists   a function $\lambda:\mathbb N\times\mathbb N\to (0,\infty)$ with $\lambda_{n,l}\to 0$ as $n,l\to\infty$ such that for all $X,\, Y\in\tt H^{s_0}$,
\begin{align*}
\sup_{n\ge 1} &\big\<J_n\tt g(J_nX)-J_l\tt g(J_lY),X-Y\big\>_{H^{\theta_0}} \\
&\le\,\tt K(\|X\|_{H^{s_0}}+\|Y\|_{H^{s_0}}) \big(\lambda_{n,l}+\|X-Y\|_{H^{\theta_0}}^2),\ \
n,l\ge 1.
\end{align*}
\end{enumerate}
\end{Assumption}

Recall $\tt H^s$ in \eqref{tt Hs}, $r_0$  in \eqref{tt Pi U}, $p_0$ in \ref{Assum-E} and $\theta_0$  in \ref{Assum-hbg}. 
Now we are in the position to state our main result for \eqref{EN}  as

\begin{Theorem}\label{T3.1} 
Assume {\rm \ref{Ak-assum}} and {\rm \ref{Assum-E}}. Let {\rm \ref{Assum-hbg}} hold with $s_0>\theta_0+\max\{q_0,2r_0,2p_0\}$.  Let $X(0)$ be an $\F_0$-measurable $\tt H^{s_0}$-valued random variable.

\begin{enumerate}[label={$\bf (\alph*)$}]
\item \label{T3.1-a} \eqref{EN} has a unique maximal solution $(X,\tau^*)$ in the sense of Definition \ref{solution definition} for the choice $\H:=\tt H^{s_0}$ and $\M:=\tt H^{\theta_0}$. Besides,
$(X,\tau^*)$ satisfies  $\P\big(X\in C([0,\tau^*);\tt H^{s_0})\big)=1$ and
$\limsup_{t\rightarrow \tau^*}\|X\|_{H^{\theta_0}}=\infty$ on $\{\tau^*<\infty\}$.

\item \label{T3.1-b} 
Let $\B(\cdot)\in C(\tt H^{\theta_0};[0,\infty))$ be a subadditive function satisfying $\B(X)\lesssim \|X\|_{H^{\theta_0}}$ for $X\in\tt H^{\theta_0}$.  If in {\rm \ref{Assum-hbg}}, $K(t,\|X\|_{H^{\theta_1}})$ and $\tt K(\|X\|_{H^{\theta_2}})$ are replaced by $K(t,\B(X))$ and $\tt K(\B(X))$, respectively,  then  
\begin{equation*}
\limsup_{t\rightarrow \tau^*}\B(X)=\infty \ \text{on}\ \{\tau^*<\infty\}.
\end{equation*}

\item \label{T3.1-c} $\P(\tau^*=\infty)=1$ provided that 
\begin{equation}\label{NE SPDE}
\limsup_{\|X\|_{H^{\theta_0}} \to \infty}
\frac{\Psi(T,X,\theta_0)}{\tt K(\|X\|_{H^{\theta_2}})\|X\|_{H^{\theta_0}}^2} 
<-1,\ \ T\in(0,\infty),
\end{equation}
where we define
\begin{equation}\label{NE h-k}
\Psi(T,X,\kk):=\sup_{t\in [0,T]}\sum_{k=1}^\infty \left(\left\|\tt \Pi\tt h_k(t,X)\right\|_{H^{\kk}}^2 
- \frac{2\big\<\tt \Pi\tt h_k(t,X),X\big\>_{H^{\kk}}^2}{{\rm e}+ \|X\|_{H^{\kk}}^2}\right),\ \ \kk>0.
\end{equation}

\end{enumerate}

\end{Theorem}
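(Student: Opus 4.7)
The plan is to recast \eqref{EN} in the abstract form \eqref{E1} with $\H:=\tt H^{s_0}$ and $\M:=\tt H^{\theta_0}$, then invoke Theorem \ref{T1} for part \ref{T3.1-a}, Proposition \ref{P1} for \ref{T3.1-b}, and Theorem \ref{T1}\ref{T1-global} with a suitable Lyapunov function for \ref{T3.1-c}. First I would convert the Stratonovich piece into It\^o form, producing the correction $\frac12\sum_k (\tt\Pi\A_k)^2 X\,\d t$, and then identify the abstract coefficients as
\begin{align*}
b(t,X)&=\tt b(X),\\
g(t,X)&=\tt\Pi\EE X+\tt g(X)+\tfrac12\sum_k (\tt\Pi\A_k)^2X,\\
h(t,X)e_k&=\tt\Pi\A_k X\ (\text{first copy}),\quad \tt\Pi\tt h_k(t,X)\ (\text{second copy}).
\end{align*}
By Assumption \ref{Assum-E}, Assumption \ref{Ak-assum}, the symbol calculus of Lemmas \ref{LOP2}--\ref{LOP3}, and Assumption \ref{Assum-hbg}, these lose at most $\max\{2p_0,q_0,2r_0\}$ derivatives, hence all singularities land in $\M=\tt H^{\theta_0}$ thanks to the hypothesis $s_0>\theta_0+\max\{q_0,2r_0,2p_0\}$, while $b$ stays in $\H$.

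Next I would introduce the proper regularization through the Friedrichs mollifier $J_n$ of \eqref{JN}, setting
\[
g_n(t,X)=J_n\tt\Pi\EE J_nX+J_n\tt g(J_nX)+\tfrac12\sum_k J_n(\tt\Pi\A_k)^2J_nX,
\]
and $h_n$ analogously with $J_n$ inserted before and after $\tt\Pi\A_k$ and $\tt\Pi\tt h_k$. Conditions \ref{R1} and \ref{R2} follow immediately from Lemma \ref{LJN}(1), the operator bounds in Lemma \ref{LOP2}, and Assumption \ref{Assum-hbg}. Condition \ref{R3} is verified against the dense set $\M_0:=\tt\Pi C^\infty\cap\M$ by integration by parts that transfers the singular derivatives from $X_n$ onto the smooth test element $Y\in\M_0$, combined with Lebesgue dominated convergence. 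The core is \ref{R4}: writing $\<\D^{s_0}\cdot,\D^{s_0}\cdot\>_{L^2}$ and commuting $\D^{s_0}$ past $\A_k$ and $\tt\Pi$, I would exhibit the \emph{two leading cancellations}: (i) $\<\tt\Pi\EE X,X\>_{H^{s_0}}\le0$ from Assumption \ref{Assum-E}; and (ii) $\|\tt\Pi\A_kX\|^2_{H^{s_0}}+\<(\tt\Pi\A_k)^2X,X\>_{H^{s_0}}$ reduces to a lower-order remainder because Assumption \ref{Ak-assum}\ref{Ak-assum-JK} gives $\A_k^*=\TT_k-\A_k$ with $\TT_k\in\OP\S^0_0$ bounded, while the commutators $[\D^{s_0},\A_k]$ have order $s_0+r_0-1$ by Lemma \ref{LOP3} (and Lemmas \ref{LOP4}--\ref{LOP5} for the $x$-dependent part). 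The bound on the $\tt g$-piece comes from the last inequality of \ref{Assum-g}, and the $\tt h_k$-piece from \ref{Assum-h}. Assumption \ref{A} then follows: \ref{A1} is precisely \ref{Assum-b}, while \ref{A2} is obtained by the same commutator analysis applied to the difference $X-Y$, now in the $\M=\tt H^{\theta_0}$ inner product, with $\lambda_{n,l}\to 0$ supplied by the last inequality in \ref{Assum-g} and the norm convergence $J_n\to\I$ in $\mathcal L(H^{s_0};H^{\theta_0})$.

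Applying Theorem \ref{T1}\ref{T1-existence} then yields the unique maximal solution and the blow-up criterion $\limsup_{t\uparrow\tau^*}\|X(t)\|_{H^{\theta_0}}=\infty$ on $\{\tau^*<\infty\}$. For continuity in $\H=\tt H^{s_0}$, I would verify Assumption \ref{B} with $T_n:=J_n$: the two quantities controlled by $K(t,N)$ there are $\sum_k\<J_nh(t,X)e_k,J_nX\>^2_{H^{s_0}}$ and $|2\<J_ng(t,X),J_nX\>_{H^{s_0}}+\|J_nh(t,X)\|^2_{\mathcal L_2(\U;H^{s_0})}|$; both are handled by repeating the commutator-cancellation from \ref{R4}, now with the mollifier only on the outside, using Lemma \ref{LJN}(3) to discard the commutator $[J_n,g\I]\nn$. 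This gives Theorem \ref{T1}\ref{T1-conti} and completes \ref{T3.1-a}. Part \ref{T3.1-b} is a direct application of Proposition \ref{P1}: all estimates in the verification above depend on $X$ only through $\|X\|_{H^{\theta_0}}$ via $\B(X)$ once the hypothesis on replacing $K$ and $\tt K$ by their $\B$-versions is in force, and the subadditivity of $\B$ enters the cutoff argument exactly as in Proposition \ref{P1}.

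For \ref{T3.1-c} I would check Assumption \ref{C} with $V(r):=\log(\mathrm{e}+r)$ so that $V'(r)=(\mathrm{e}+r)^{-1}>0$ and $V''(r)=-(\mathrm{e}+r)^{-2}<0$. Expanding the left-hand side of \ref{C} at the point $X$ in $\M=\tt H^{\theta_0}$, the $\tt\Pi\EE$ and $\tt\Pi\A_k$ contributions to $2\<g,X\>_\M+\sum_k\|\tt\Pi\A_kX\|^2_\M$ cancel up to terms bounded by $\tt K(\|X\|_{H^{\theta_0}})\|X\|^2_{H^{\theta_0}}$ by the same commutator estimate from \ref{R4}, the $\tt b$ and $\tt g$ contributions are bounded by $\tt K(\|X\|_{H^{\theta_2}})\|X\|^2_{H^{\theta_0}}$ via \ref{Assum-b}--\ref{Assum-g}, and the remaining $\tt h_k$ contribution assembles, after dividing by $\mathrm{e}+\|X\|_\M^2$, into precisely $\Psi(T,X,\theta_0)/(\mathrm{e}+\|X\|_\M^2)$ from \eqref{NE h-k}. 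Hypothesis \eqref{NE SPDE} then forces the total to be at most $F(t)\log(\mathrm{e}+\|X\|_\M^2)=F(t)V(\|X\|_\M^2)$ for large $\|X\|_\M$, and hence everywhere after adjusting $F$; Theorem \ref{T1}\ref{T1-global} delivers non-explosion. The hard part will be the commutator bookkeeping needed for \ref{R4} and \ref{B} when $\tt\Pi$ is the Leray or zero-average projection, since $\tt\Pi$ is not itself pseudo-differential: one must exploit the invariance $\tt\Pi\mathcal U|_{\tt H^s}=\mathcal U|_{\tt H^s}$ from \ref{Ak-assum-Pi} to slide $\tt\Pi$ out of inner products on $\tt H^s$ before the standard $\OP\S^{r_0}$ calculus can be applied, a technical device that is exactly what the auxiliary Lemma \ref{LAK} is designed to supply.
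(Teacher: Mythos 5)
Your proposal is correct and follows essentially the same route as the paper: Stratonovich-to-It\^o conversion, embedding into \eqref{E1} with $\H=\tt H^{s_0}$, $\M=\tt H^{\theta_0}$, proper regularization via the Friedrichs mollifier $J_n$, the cancellation Lemma \ref{LAK} to verify \ref{R4} and \ref{A2}, Assumption \ref{B} with $T_n=J_n$ for $\H$-continuity, Proposition \ref{P1} for part \ref{T3.1-b}, and the Lyapunov function $V(r)=\log(\mathrm{e}+r)$ for part \ref{T3.1-c}. The only cosmetic difference is that the paper additionally splits $\A_k=a_k\J_k+q_k\K_k$ into separate noise channels (using $a_kq_k=0$) before regularizing, which your "two copies" bookkeeping captures in substance.
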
 

\begin{Remark}\label{T3.1-Remark} 
Now we give some technical remarks on \ref{Assum-hbg} and Theorem \ref{T3.1}.

\textbf{(1)}  We assume the growth factor for noise coefficients $h_k$ in \ref{Assum-h} involves $\|X\|_{\theta_1}$. 
Since $(\tt b,\tt g)$ comes from nonlinear PDEs, we assume that the growth factor in \ref{Assum-b} and \ref{Assum-g} involves $\|X\|_{\theta_2}$.  The parameters   $(q_0 ,\theta_2)$ will be determined by $(\tt b,\tt g)$ in  specific models. 
In many cases we require $H^{\theta_2}\hookrightarrow W^{k,\infty}$, for which we may take 
$\theta_2\in(\frac{d}{2}+k,\infty)$ by the Sobolev embedding theorem. 

\textbf{(2)} Assume {\rm \ref{Ak-assum}} and {\rm \ref{Assum-E}}. Theorem \ref{T3.1} means that if {\rm \ref{Assum-hbg}} holds for certain $\theta_1,\theta_2\ge0$ and some $s_0>\theta_0+\max\{q_0,2r_0,2p_0\}$, then  $X(0)\in\tt H^{s_0}$ implies that \eqref{EN} has a unique solution  $X\in C([0,\tau^*);\tt H^{s_0})$. Here the   parameter $s_0$ has freedom in the interval $(\theta_0+\max\{q_0,2r_0,2p_0\},\infty)$, so that the result implies stronger continuity of the solution when $s_0$ is replaced by larger parameters.    For instance, if $X(0)\in \tt H^\infty:=\cap_{k\ge 1} \tt H^k$ and  \ref{Assum-hbg} holds for any  $s_0\in (\theta_0+\max\{q_0,2r_0,2p_0\},\infty),$ then Theorem \ref{T3.1} implies that the unique  solution  is in $C([0,\tau^*);\tt H^{\infty})$, where the lifespan is   determined by $\|X\|_{\M}$ (cf. \eqref{tau-R-M} and \eqref{tau-R-M-X}), and  for any
$s> \theta_0$,   $\limsup_{t\to \tau^*} \|X(t)\|_{H^{\theta_0}}=\infty$ is equivalent to $\limsup_{t\to \tau^*} \|X(t)\|_{H^{s}}=\infty$.  See Sections \ref{Section:Applications} for more examples.
\end{Remark}

\subsection{Proof of Theorem \ref{T3.1}  } 
We 
denote 
\begin{equation}\label{tt E A J K}
\tt{ \mathcal{U}}:=\tt \Pi \mathcal{U}\ \text{for}\ \mathcal{U}\in\{\EE,\, \A_k,\, \J_k,\,  \K_k\},\ \ k\ge1.
\end{equation}
Since $a_kq_k=0$ in \ref{Ak-assum-akbk}, for $\A_k$ in \eqref{Ak define}, we may write 
\begin{equation*}
\{ \tt\A_k X(t)\}\circ \d W_k(t)=\{a_k\tt\J_k X(t)\}\circ \d\ol W_k(t)+\{q_k\tt\K_k X(t)\}\circ \d\hh W_k(t),\ \ k\ge1
\end{equation*} for independent $1$-D Brownian motions $\{\ol W_k(t), \hh W_k(t)\}$, which are also independent of $\{\tt W_k(t)\}$.
Thus, by  \eqref{transform into Ito},
we
rewrite \eqref{EN} as
\begin{align} \label{EN-Ito}
\d X(t)
=\,&\bigg\{(\tt{\EE} X)(t)+\tt b(X(t))+\tt g(X(t)) 
+ \frac 1 2 \sum_{k=1}^\infty \left[\left(a_k\tt\J_k\right)^2 X+\left(q_k\tt\K_k\right)^2 X\right](t)\bigg\} \d t\nonumber\\
&+ \sum_{k=1}^\infty \bigg\{\left(a_k\tt\J_k X\right)(t) \d \ol W_k(t)
+\left(q_k\tt\K_k X\right)(t) \d\hh W_k(t)\bigg\}\nonumber\\
&+\sum_{k=1}^\infty \tt h_k(t,X(t))\d \tt W_k(t),\ \ t\ge 0.
\end{align} 

Before we prove Theorem \ref{T3.1}, we state the following cancellation properties  regarding on $\A_k$, which is required to verify  \ref{R4}.

\begin{Lemma}\label{LAK} Assume {\rm \ref{Ak-assum}}. For any $\si\ge0$, there exists a constant $C>0$ such that $\Y_k\in\big\{a_k \tt\J_k,q_k \tt\K_k\big\}$ satisfies 
\begin{equation}\label{LO1-Ak} 
\sup_{n\ge 1} \sum_{k=1}^\infty \left\<J_n \Y_k X, J_n X\right\>_{H^\si}^2 +\sup_{n\ge 1} \sum_{k=1}^\infty \left\<J_n \Y_kJ_nX, X\right\>_{H^\si}^2 \le C\|X\|_{H^{\si}}^4,\ \ X\in \tt H^{\si+r_0},
\end{equation}
\begin{equation}\label{LO2-Ak} 
\sup_{n\ge 1} \sum_{k=1}^\infty\bigg|\left\<J_n \Y_k^2 X, J_nX\right\>_{H^\si} + \left\| J_n \Y_k X\right\|_{H^\si}^2\bigg| 
\le C\|X\|_{H^\si}^2,\ \ X\in \tt H^{\si+2r_0},
\end{equation}
\begin{equation}\label{LO3-Ak} 
\sup_{n\ge 1} \sum_{k=1}^\infty \bigg|\left\<J^3_n \Y_k^2 J_n X, X\right\>_{H^\si} + \left\| J_n \Y_k J_n X\right\|_{H^\si}^2\bigg| 
\le C\|X\|_{H^\si}^2,\ \ X\in \tt H^{\si+2r_0}.
\end{equation}
\end{Lemma}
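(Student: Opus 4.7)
The overall strategy is to exploit the near-anti-symmetry encoded in \ref{Ak-assum-JK}, namely $\mathcal{J}_k^* = \mathcal{T}_k - \mathcal{J}_k$ and $\mathcal{K}_k^* = \mathcal{Q}_k - \mathcal{K}_k$ with $\{\mathcal{T}_{k,i}\}\subset \mathrm{OP}\mathcal{S}_0^0$ and $\{\mathcal{Q}_{k,i}\}\subset \mathrm{OP}\mathcal{S}^0$ uniformly bounded, combined with the $\ell^2$-summability of $\{a_k\}, \{q_k\}$ from \ref{Ak-assum-akbk}. The essential algebraic identity is that for $B:=\mathcal{Y}+\mathcal{Y}^*$ one has
\[
\mathcal{Y}^2+\mathcal{Y}^*\mathcal{Y}=B\mathcal{Y},\qquad 2\,\mathrm{Re}\,\langle B\mathcal{Y} U, U\rangle_{L^2}=\langle(B^2+[\mathcal{Y}^*,B])U,U\rangle_{L^2},
\]
so the top-order piece of $\mathcal{Y}^2$ is cancelled against $\|\mathcal{Y}\cdot\|_{L^2}^2=\langle\mathcal{Y}^*\mathcal{Y}\cdot,\cdot\rangle$, leaving a remainder whose real part has symbolic order at most $\max(0,2r_0-1)$; the commutator $[\mathcal{Y}^*,B]=[\mathcal{Y}^*,\mathcal{Y}]$ vanishes when the underlying symbols commute (Case 1 of \ref{Ak-assum-Pi} or whenever $\mathcal{Y}=a_k\tilde{\mathcal{J}}_k$), and otherwise lies in a uniformly bounded subset of $\mathrm{OP}\mathcal{S}^{2r_1-1}$ with $r_1\le 1$ by Lemma \ref{LOP3}.

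I would carry out the proof in four steps. \textbf{Step 1 (Reduction to $L^2$).} Conjugate with $\mathcal{D}^\sigma$ to move all estimates into $L^2$: since $[\mathcal{D}^\sigma,\mathcal{J}_k]=0$ (both $x$-independent Fourier multipliers) and $\{[\mathcal{D}^\sigma,\mathcal{K}_k]\}$ is uniformly bounded in $\mathrm{OP}\mathcal{S}^{\sigma+r_1-1}$ by Lemma \ref{LOP3}, the commutator remainders produce harmless pairings controlled by $(|a_k|+|q_k|)\|X\|_{H^\sigma}^2$. \textbf{Step 2 (Handling $J_n$).} Move $J_n$ past $\mathcal{Y}_k$ using $[J_n,\mathcal{J}_k]=0$ and $[J_n,\tilde\Pi]=0$ (since $\tilde\Pi$ is a Fourier multiplier by \eqref{tt Pi} and Lemma \ref{LJN}(2)), while $\{[J_n,\mathcal{K}_k]\}_{n\ge1}$ is uniformly bounded in $\mathrm{OP}\mathcal{S}^{r_1-1}$ by Lemma \ref{LOP3} applied to the bounded family $\{J_n\}\subset\mathrm{OP}\mathcal{S}_0^0$. \textbf{Step 3 (Algebraic cancellation).} Apply the identity above to $\langle J_n\mathcal{Y}_k^2 X,J_nX\rangle_{L^2}+\|J_n\mathcal{Y}_kX\|_{L^2}^2$ to establish \eqref{LO2-Ak}, and to the shifted pair with $U=J_n X$ in place of $X$ to establish \eqref{LO3-Ak}; the resulting main term $\tfrac12\langle B^2 U,U\rangle$ is bounded by $C(|a_k|^2+|q_k|^2)\|U\|_{L^2}^2$, and summing over $k$ via \ref{Ak-assum-akbk} closes both estimates. \textbf{Step 4 (Linear estimate \eqref{LO1-Ak}).} Use the weaker identity $2\,\mathrm{Re}\,\langle\mathcal{Y}_k U,U\rangle=\langle B U,U\rangle$ after conjugation to get $|\langle J_n\mathcal{Y}_kX,J_nX\rangle_{H^\sigma}|\lesssim (|a_k|+|q_k|)\|X\|_{H^\sigma}^2$, and likewise $|\langle J_n\mathcal{Y}_kJ_nX,X\rangle_{H^\sigma}|\lesssim (|a_k|+|q_k|)\|X\|_{H^\sigma}^2$; squaring and summing in $k$ yields \eqref{LO1-Ak}.

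The main obstacle will be the correct bookkeeping of commutators in \emph{Case 2} of \ref{Ak-assum-Pi}, where $\tilde\Pi$ need not be a pseudo-differential operator and Lemma \ref{LOP3} cannot be invoked directly for $\tilde{\mathcal{J}}_k=\tilde\Pi\mathcal{J}_k$. Here one exploits the standing hypothesis $\tilde\Pi\,\mathcal{U}|_{\tilde H^s}=\mathcal{U}|_{\tilde H^s}$ for every $\mathcal{U}\in\{\mathcal{J}_k,\mathcal{T}_k,\mathcal{K}_k,\mathcal{Q}_k\}$: iteratively, all relevant intermediate expressions $\mathcal{J}_k X,\mathcal{J}_k^2 X,\mathcal{K}_kX,\ldots$ stay in $\tilde H^{\cdot}$, so each $\tilde\Pi$ in front of such an iterate acts as the identity, which lets one replace $\tilde{\mathcal{J}}_k$ by $\mathcal{J}_k$ inside all $L^2$-pairings and apply the near-anti-symmetry of $\mathcal{J}_k$ itself. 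Once this replacement is justified, the cancellation argument of Steps 3--4 proceeds identically, and the remaining task is to verify that every constant is uniform in both $n$ and $k$; this uniformity follows from the uniform operator-norm bounds in Lemmas \ref{LOP2}, \ref{LOP3}, and \ref{LJN}, which is why \ref{Ak-assum} is formulated in terms of \emph{bounded subsets} of $\mathrm{OP}\mathcal{S}^{r_j}$.
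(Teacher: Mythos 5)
Your proposal follows essentially the same route as the paper's proof: conjugate by $\mathcal P_n=\D^\si J_n$, use the near-anti-symmetry $\Y^*=B-\Y$ with $B\in\{\Q_k,\TT_k\}$ of order zero to cancel the top-order part of $\Y^2$ against $\Y^*\Y$, control the resulting commutators uniformly via Lemmas \ref{LOP2}--\ref{LOP3} and Lemma \ref{LJN}, treat $\tt\Pi$ by the same two-case dichotomy of \ref{Ak-assum-Pi} (Fourier-multiplier case versus the replacement $\tt\Pi\,\mathcal U|_{\tt H^s}=\mathcal U|_{\tt H^s}$), and sum in $k$ through \ref{Ak-assum-akbk}. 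One bookkeeping slip: you assign $[\Y^*,B]=[\Y^*,\Y]$ the order $2r_1-1$, which is positive (hence not $L^2$-bounded) when $r_1>\tfrac12$; but since $\Y^*=B-\Y$, this commutator equals $[B,\Y]$, i.e.\ essentially $[\Q_k,\K_k]$, which by Lemma \ref{LOP3} lies in a bounded subset of $\OP\S^{r_1-1}\subset\OP\S^{0}$ — exactly the paper's term $\tt{\mathcal R}_{1,k}=[\tt\K_k,\tt\Pi\Q_k]$. With that one-line correction the remainder in your Step 3 is genuinely of order $\le 0$ and the argument closes as in the paper.
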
 
\begin{proof}
%


\textbf{Verify \eqref{LO1-Ak} for $\Y_k=q_k\tt\K_k$.} Remember that in this case $r_0=r_1\in[0,1]$ (cf. \eqref{tt Pi U}). Since $J_n$ and $D^{\si}$ have scalar symbols and $\K_k^*=\Q_k-\K_k$, we have 
for $\{{\mathcal P}_n:= \D^\si J_n\}\subset \OP\S_0^{\si}$,
\begin{align*} \left\<J_n\tt\K_k X, J_n X\right\>_{H^\si} 
=\,&\left\<{\mathcal P}_n\K_k X, {\mathcal P}_n X\right\>_{L^2}\\
=\,& \left\<[{\mathcal P}_n,\K_k]X, {\mathcal P}_n X\right\>_{L^2}
+ \left\<{\mathcal P}_nX, \Q_k {\mathcal P}_nX\right\>_{L^2}
-\left \<{\mathcal P}_n X, \K_k {\mathcal P}_n X\right\>_{L^2} \\
=\,&2 \left\<[{\mathcal P}_n,\K_k]X, {\mathcal P}_nX\right\>_{L^2}
+\left \<{\mathcal P}_n X, \Q_k {\mathcal P}_n X\right\>_{L^2}
- \left\<{\mathcal P}_n X, {\mathcal P}_n \K_k X\right\>_{L^2},\end{align*}
which means
\begin{equation} \left\<J_n\tt\K_kX, J_nX\right\>_{H^\si} 
=\left\<[{\mathcal P}_n,\K_k]X, {\mathcal P}_nX\right\>_{L^2}
+ \frac 1 2
\left \<{\mathcal P}_n X, \mathcal Q_k {\mathcal P}_nX\right\>_{L^2}.\label{LO1-Ak-K}
\end{equation} 
It follows from \ref{Ak-assum}, the boundedness of $\{{\mathcal P}_n\}_{n\ge 1}\subset \OP\S_0^\si$, $\{\K_k\}_{k\ge1}\subset \OP\SS^{r_1}$ and $\{\Q_k\}_{k\ge1}\subset \OP\SS^{0}$,  Lemmas \ref{LOP2} and \ref{LOP3} that 
\begin{equation*}
\sup_{n\ge 1} \sum_{k=1}^\infty \left\<J_n (q_k \tt \K_k) X, J_n X\right\>_{H^\si}^2 \le C\sum_{k=1}^\infty q_k^2\|X\|_{H^{\si}}^4\le C \|X\|_{H^{\si}}^4,\ \ X\in \tt H^{\si+r_0}.
\end{equation*}
Similarly, by Lemma \ref{LJN}, repeating the above process with ${\mathcal P}:=\D^\si$ leads to
\begin{equation*}
\sup_{n\ge 1} \sum_{k=1}^\infty \left\<J_n (q_k \tt\K_k)J_nX, X\right\>_{H^\si}^2 \le C\|X\|_{H^{\si}}^4,\ \ X\in \tt H^{\si+r_0}.
\end{equation*}
Combining the above two estimates gives rise to \eqref{LO1-Ak} for the case $\mathcal Y_k=q_k\tt\K_k$.

\textbf{Verify \eqref{LO1-Ak} for $\Y_k=a_k\tt\J_k$.} Similar to \eqref{LO1-Ak-K}, in this case we have $[\J_k,{\mathcal P}_n]=0$ and hence we have
\begin{equation*} \left\<J_n\tt\J_kX, J_nX\right\>_{H^\si} 
=\frac 1 2
\left \<{\mathcal P}_n X, \TT_k {\mathcal P}_nX\right\>_{L^2}. 
\end{equation*} 
By the same reason leading to \eqref{LO1-Ak} for the case $\A_k=q_k\tt\K_k$, we have
\begin{equation*}                       
\sup_{n\ge 1} \sum_{k=1}^\infty \left\<J_n a_k \tt \J_k X, J_n X\right\>_{H^\si}^2 \le C\sum_{k=1}^\infty a_k^2\|X\|_{H^{\si}}^4\le C \|X\|_{H^{\si}}^4,\ \ X\in \tt H^{\si+r_0}.
\end{equation*}
Similarly, one can obtain the desired bound for the other term.
Hence \eqref{LO1-Ak} also holds true for the case $\mathcal Y_k=a_k\J_k$.

\textbf{Verify \eqref{LO2-Ak} for $\Y_k=q_k\tt\K_k$.} 
Because $\tt\Pi$ is self-adjoint, we have 
\begin{align*}
(\tt\K_k)^*=\K_k^*\tt\Pi=\Q_k\tt\Pi-\K_k\tt\Pi.
\end{align*}
We will frequently use the above facts as well as the following properties without further notice:
$$\tt\K_k=\tt\Pi\K_k=\tt\Pi\tt\K_k,\ [\mathcal P_n,\tt\Pi]=0,\ \tt\Pi=\tt\Pi^*, \ \Q_k=\Q_k^*\ \text{and}\ \mathcal P_n X=\tt\Pi\mathcal P_n X\ \text{for}\ X\in \tt H^{\si+2r_0}.$$
For $k,n\in\N$, we let
\begin{align*}           
\mathcal{P}_n:=\D^{\si}J_n,\ \ 
\tt{\mathcal{R}}_{1,k}:= [\tt\K_k,\tt\Pi\Q_k],\ \
\tt{\mathcal{R}}_{2,k,n}:=[\mathcal{P}_n,\tt \K_k],\ \ \tt{\mathcal{R}}_{3,k,n}:=[\tt\Pi\mathcal{R}_{2,k,n},\tt \K_k],
\end{align*}
We observe that for all $X\in\tt H^{\si+2r_0}$,
\begin{align*}
&\left\<J_n\tt\K_k^2 X, J_nX\right\>_{H^\si}\nonumber\\
=\,&\left\<\mathcal{P}_n \tt\K_k X, (\tt\K_{k} )^*\mathcal{P}_n X \right\>_{L^{2}}
+\left\<\tt{\mathcal{R}}_{2,k,n} \tt\K_k X, \mathcal{P}_n X \right\>_{L^{2}}\nonumber\\
=\,&-\left\<\mathcal{P}_n \tt \K_k X, \tt\K_k \mathcal{P}_n X \right\>_{L^{2}}+ \left\<\mathcal{P}_n \tt\K_k X, \Q_k \mathcal{P}_n X \right\>_{L^{2}}+ \left\<\tt{\mathcal{R}}_{2,k,n}\tt \K_k X, \mathcal{P}_n X \right\>_{L^{2}}\nonumber\\
=\,&- \left\<\mathcal{P}_n \tt \K_k X, \mathcal{P}_n \tt\K_k X \right\>_{L^{2}}
+ \left\<\mathcal{P}_n \tt\K_k X, \tt{\mathcal{R}}_{2,k,n} X \right\>_{L^{2}}\nonumber\\
& + \left\<\mathcal{P}_n \tt\K_k X, \Q_k \mathcal{P}_n X \right\>_{L^{2}}+ \left\<\tt{\mathcal{R}}_{2,k,n} \tt\K_k X, \mathcal{P}_n X \right\>_{L^{2}},
\end{align*}
which means
\begin{align*}
&\left\<J_n\tt\K_k^2 X, J_nX\right\>_{H^\si} + \| J_n \tt\K_k X\|_{H^\si}^2 \\
=\,&
\left\<\mathcal{P}_n \tt\K_k X, \tt{\mathcal{R}}_{2,k,n} X \right\>_{L^{2}}
+ \left\<\mathcal{P}_n \tt\K_k X, \Q_k \mathcal{P}_n X \right\>_{L^{2}}
+ \left\<\tt{\mathcal{R}}_{2,k,n} \tt\K_k X, \mathcal{P}_n X \right\>_{L^{2}}.
\end{align*}
Then, 
we observe that
\begin{equation*}\begin{split}
& \left\<J_n\tt\K_k^2 X, J_nX\right\>_{H^\si} + \| J_n \tt\K_k X\|_{H^\si}^2  \\ 
=\,& \left\< \tt\K_k \mathcal{P}_n X, \tt{\mathcal{R}}_{2,k,n} X \right\>_{L^{2}}
+ \left\<\tt{\mathcal{R}}_{2,k,n} X, \tt{\mathcal{R}}_{2,k,n} X \right\>_{L^{2}}\\
&+ \left\<\tt{\mathcal{R}}_{2,k,n} \tt\K_k X, \mathcal{P}_n X \right\>_{L^{2}}+ \left\<\mathcal{P}_n \tt\K_k X, \Q_k\mathcal{P}_n X \right\>_{L^{2}} \\ 
=\,& - \left\<\mathcal{P}_n X, \tt\K_k \tt\Pi\tt{\mathcal{R}}_{2,k,n} X \right\>_{L^{2}}+ \left\<\mathcal{P}_n X, \Q_k\tt\Pi \tt{\mathcal{R}}_{2,k,n} X \right\>_{L^{2}} \\
&+ \left\< \tt{\mathcal{R}}_{2,k,n} X, \tt{\mathcal{R}}_{2,k,n} X \right\>_{L^{2}}+ \left\<\tt\Pi\tt{\mathcal{R}}_{2,k,n} \tt \K_k X, \mathcal{P}_n X \right\>_{L^{2}} + \left\<\mathcal{P}_n\tt\K_k X, \Q_k\mathcal{P}_n X \right\>_{L^{2}} \\ 
=\,& \left\<\tt{\mathcal{R}}_{3,k,n} X, \mathcal{P}_n X \right\>_{L^{2}}+ \left\<\mathcal{P}_n X, \Q_k \tt\Pi\tt{\mathcal{R}}_{2,k,n} X \right\>_{L^{2}}\\
&+ \left\< \tt{\mathcal{R}}_{2,k,n} X, \tt{\mathcal{R}}_{2,k,n} X \right\>_{L^{2}} + \left\<\mathcal{P}_n\tt\K_k X, \tt\Pi\Q_k\mathcal{P}_n X \right\>_{L^{2}}.
\end{split}\end{equation*} 
Hence $$- \left\<\mathcal{P}_n X, \tt\K_k \tt\Pi\tt{\mathcal{R}}_{2,k,n} X \right\>_{L^{2}}+ \left\<\tt\Pi\tt{\mathcal{R}}_{2,k,n} \tt \K_k X, \mathcal{P}_n X \right\>_{L^{2}}
= \left\<\tt{\mathcal{R}}_{3,k,n} X, \mathcal{P}_n X \right\>_{L^{2}}.$$ 
Once again, to deal with $\mathcal{P}_n\tt\K_k$, we have
\begin{align*}
& \left\<\mathcal{P}_n\tt\K_k X, \tt\Pi\Q_k\mathcal{P}_n X \right\>_{L^{2}}\\
=\,& - \left\<\mathcal{P}_n X, \tt\Pi\Q_k\tt \K_k \mathcal{P}_n X \right\>_{L^{2}}
- \left\<\mathcal{P}_n X, \tt{\mathcal{R}}_{1,k} \mathcal{P}_n X \right\>_{L^{2}}\\
&+ \left\<\mathcal{P}_n X, \Q_k\tt\Pi\Q_k \mathcal{P}_n X \right\>_{L^{2}}
+ \left\<\tt{\mathcal{R}}_{2,k,n} X, \tt\Pi\Q_k \mathcal{P}_n X \right\>_{L^{2}}\\
=\,& - \left\<\tt\Pi\Q_k\mathcal{P}_n X, \tt\K_k \mathcal{P}_n X \right\>_{L^{2}}
- \left\<\mathcal{P}_n X, \tt{\mathcal{R}}_{1,k} \mathcal{P}_n X \right\>_{L^{2}} \\
&+ \left\<\mathcal{P}_n X, \Q_k\tt\Pi\Q_k \mathcal{P}_n X \right\>_{L^{2}}
+ \left\<\tt{\mathcal{R}}_{2,k,n} X, \tt\Pi\Q_k \mathcal{P}_n X \right\>_{L^{2}}.
\end{align*}
Accordingly, adding $ \left\<\mathcal{P}_n\tt\K_k X, \tt\Pi\Q_k\mathcal{P}_n X \right\>_{L^{2}}$ to both sides of the above equation and then using $\tt{\mathcal{R}}_{2,k,n}$ yield
\begin{equation*}\begin{split}
&2 \left\<\mathcal{P}_n\tt\K_k X, \tt\Pi\Q_k\mathcal{P}_n X \right\>_{L^{2}} \\
=\, & 
- \left\<\mathcal{P}_n X, \tt{\mathcal{R}}_{1,k} \mathcal{P}_n X \right\>_{L^{2}} + \left\<\mathcal{P}_n X, \Q_k\tt\Pi\Q_k \mathcal{P}_n X \right\>_{L^{2}}
+2 \left\<\tt{\mathcal{R}}_{2,k,n} X, \tt\Pi\Q_k \mathcal{P}_n X \right\>_{L^{2}}
\end{split}\end{equation*} 
Combining the above estimates gives 
\begin{align}
\Big|\left\<J_n\tt\K_k^2 X, J_nX\right\>_{H^\si} + \| J_n \tt\K_k X\|_{H^\si}^2\Big|
\leq \sum_{i=1}^6|I_i|\label{LO1-2-K}
\end{align}
with
\begin{equation*}
I_{1}=\left\< \tt{\mathcal{R}}_{3,k,n}X, \mathcal{P}_n X \right\>_{L^2}, \ \ 
I_{2}=\left\< \tt{\mathcal{R}}_{2,k,n}X, \tt{\mathcal{R}}_{2,k,n} X \right\>_{L^2},
\end{equation*}
\begin{equation*}
I_{3}=\left\<\mathcal{P}_n X, \tt\Pi\Q_k\tt{\mathcal{R}}_{2,k,n}X \right\>_{L^2},
\ \ I_{4}=-\frac{1}{2}\left\< \mathcal{P}_n X, \tt{\mathcal{R}}_{1,k} \mathcal{P}_n X \right\>_{L^2}, 
\end{equation*}
\begin{equation*}
I_{5}= \frac{1}{2}\left\< \mathcal{P}_n X, \Q_k\tt\Pi\Q_k \mathcal{P}_n X \right\>_{L^2},\ \
I_{6} =\left\< \tt{\mathcal{R}}_{2,k,n}X, \tt\Pi\Q_k\mathcal{P}_n X \right\>_{L^2}
\end{equation*}

Now we argue in two cases. We first consider   $\tt\Pi={\rm diag}\{\tt\Pi_1,\cdots,\tt\Pi_m\}$ with $\tt\Pi_i\in \OP\S_0^0$ $(1\le i\le m)$ in \ref{Ak-assum-Pi}. In this case,   $\{\tt\K_k\}_{k\ge1}\subset \OP\SS_0^{r_1}$ and $\{\tt\Pi\Q_k\}_{k\ge1}\subset \OP\SS_0^{0}$ are bounded, so that  Lemmas \ref{LOP2} and \ref{LOP3} imply 
\begin{equation*}
\sup_{n,k\ge1}\left\{\big\|\tt{\mathcal{R}}_{1,k}\big\|_{\LL(L^{2};L^2)}
+\big\|\tt{\mathcal{R}}_{2,k,n}\big\|_{\LL(H^{s};L^2)}
+\big\|\tt{\mathcal{R}}_{3,k,n}\big\|_{\LL(H^{s};L^2)}\right\}<\infty.
\end{equation*}
Hence \eqref{LO2-Ak} holds true   for $\mathcal Y_k=q_k\tt\K_k$. 

Next, we  assume that  \eqref {tt Pi U} in \ref{Ak-assum-Pi} holds.  In this case we let 
\begin{equation}
\mathcal{R}_{1,k}:=[\K_k,\Q_k],\ \ \mathcal{R}_{2,k,n}:=[\mathcal{P}_n,\K_k],\ \
\mathcal{R}_{3,k,n}:=[\mathcal{R}_{2,k,n},\K_k].
\end{equation}
Then \eqref{tt Pi U} and the fact that $[\mathcal P_n,\tt\Pi]=0$ tell us
\begin{equation*}
\tt{\mathcal{R}}_{1,k}\big|_{\tt H^s}= \mathcal{R}_{1,k}\big|_{\tt H^s},\ \
\tt{\mathcal{R}}_{2,k,n}\big|_{\tt H^s}=\mathcal{R}_{2,k,n}\big|_{\tt H^s},\ \
\tt{\mathcal{R}}_{3,k,n}\big|_{\tt H^s}=\mathcal{R}_{3,k,n}\big|_{\tt H^s}.
\end{equation*}
On one hand, as $X\in\tt H^{s+r_0}$ (hence $\mathcal P_n X\in \tt L^2$), one can replace
$\tt{\mathcal{R}}_{1,k}$, $\tt{\mathcal{R}}_{2,k,n}$ and $\tt{\mathcal{R}}_{3,k,n}$ in above $I_{i}$ ($i=1,2,3,4,6$) by ${\mathcal{R}}_{1,k}$, ${\mathcal{R}}_{2,k,n}$ and ${\mathcal{R}}_{3,k,n}$, respectively.
On the other hand, 
due to the boundedness of $\{\K_k\}_{k\ge1}\subset \OP\SS^{r_1}$ and $\{\Q_k\}_{k\ge1}\subset \OP\SS^{0}$, 
Lemmas \ref{LOP2} and \ref{LOP3} imply
\begin{equation*}
\sup_{n,k\ge1}\Big\{\|\mathcal{R}_{1,k}\|_{\LL(L^{2};L^2)}+
\|\mathcal{R}_{2,k,n}\|_{\LL(H^{s};L^2)}+\| \mathcal{R}_{3,k,n}\|_{\LL(H^{s};L^2)}\Big\}<\infty.
\end{equation*}
Consequently, for $X\in\tt H^{s+r_0}$, we have 
\begin{align*}
&\sup_{n\ge 1} \sum_{k=1}^\infty \left|\left\<J_n\left(q_k\tt\K_k\right)^2 X, J_nX\right\>_{H^\si} + \left\| J_n (q_k\tt\K_k) X\right\|_{H^\si}^2\right| \\
\leq\,  &\sum_{k=1}^\infty q_k^2\left(\displaystyle\sum_{i=1}^{6} |I_{i}|\right)
\leq\, C\|X\|^2_{H^\si},
\end{align*}
which is the desired result.

\textbf{Verify \eqref{LO2-Ak} for $\Y_k=a_k\tt\J_k$.} In this case,  by the same argument leading to   \eqref{LO1-2-K}, we derive the same estimate    for $(\tt\J_k, \TT_k)$ replacing $(\tt\K_k,   \Q_k)$.   By  \ref{Ak-assum-JK}, in each situations of \ref{Ak-assum-Pi} we have 
$\mathcal{R}_{1,k}=\mathcal{R}_{2,k,n}=\mathcal{R}_{3,k,n}=0$ for $k,n\ge1$, 
so that 
\begin{align*}
&\sum_{k=1}^\infty\bigg|\left\<J_n\left(a_k\tt\J_k\right)^2 X, J_nX\right\>_{H^\si} + \| J_n (a_k\tt\J_k) X\|_{H^\si}^2\bigg|  \\
=\, &\sum_{k=1}^\infty a_{k}^2 \left|\frac{1}{2}\left\< \mathcal{P}_n X, \TT_k\tt\Pi\TT_k \mathcal{P}_n X \right\>_{L^2}\right|
\leq C\|{X}\|_{H^\si}^2. 
\end{align*}

\textbf{Verify \eqref{LO3-Ak}.} For the case that $\Y_k=q_k\tt\K_k$, going along the lines of the above proof of \eqref{LO2-Ak} with replacing $X$ by $J_n X$ gives the desired upper bound. The case of $\Y_k=a_k\tt\J_k$ is also similar. We omit the details for brevity.
\end{proof}

With Lemma \ref{LAK} at hand,
now we are in the position to prove Theorem \ref{T3.1}.

\begin{proof}[Proof of Theorem \ref{T3.1}] 
Recall that \eqref{EN} is equivalent to \eqref{EN-Ito}:
\begin{align*}
\d X(t)
=\,&\bigg\{(\tt{\EE} X)(t)+\tt b(X(t))+\tt g(X(t)) 
+ \frac 1 2 \sum_{k=1}^\infty \left[\left(a_k\tt\J_k\right)^2 X+\left(q_k\tt\K_k\right)^2 X\right](t)\bigg\} \d t\nonumber\\
&+ \sum_{k=1}^\infty \bigg\{\left(a_k\tt\J_k X\right)(t) \d \ol W_k(t)
+\left(q_k\tt\K_k X\right)(t) \d\hh W_k(t)\bigg\}\nonumber\\
&+\sum_{k=1}^\infty \tt h_k(t,X(t))\d \tt W_k(t),\ \ t\ge 0.
\end{align*}

\ref{T3.1-a} \textbf{(Existence and uniqueness)} The above equation is embedded into \eqref{E1} with 
\begin{equation}\label{GH}
\left\{\begin{aligned}
&b(t,X):=\tt b(X),\\
& g(t,X)= g(X):=\tt\EE X +\tt g(X)+ \sum_{k=1}^\infty \left[\left(a_k\tt\J_k\right)^2 X+\left(q_k\tt\K_k\right)^2 X\right],\\
& h(t,X)e_{3k-2} := a_k\tt\J_kX,\ \ k\ge 1,\\ 
&h(t,X)e_{3k-1} :=  q_k \tt\K_k  X,\ \ k\ge 1,\\ 
&h(t,X)e_{3k}:= \tt h_k(t,X),\ \ k\ge 1,\\
& \W(t):=\sum_{k=1}^\infty \left(\ol W_k(t) e_{3k-2} +\hh W_k(t) e_{3k-1}+\tt W_k(t) e_{3k}\right).
\end{aligned}\right.
\end{equation} 
Let $\{J_n\}_{n\ge 1}$ be defined in \eqref{JN}. 
For any $n\in\mathbb N$ and $X\in \tt H^{s_0}$, define
\begin{equation}\label{GH-n}
\left\{\begin{aligned}
&g_n(X):=\,  J_n\tt\EE J_n X+J_n \tt g(J_nX)\\
&\qquad\qquad+\frac{1}{2} \sum_{k=1}^{\infty}J_n^3(a_k\tt\J_k)^2J_nX+\frac{1}{2} \sum_{k=1}^{\infty}J_n^3(q_k\tt\K_k)^2J_nX,\ \ k\ge 1,\\
&h_n(t,X)e_{3k+1} :=\, J_n(a_k\tt\J_k) J_nX,\ \ k\ge 1,\\ 
&h_n(t,X)e_{3k+2} :=\,J_n(q_k \tt\K_k) J_nX,\ \ k\ge 1,\\ 
&h_n(t,X)e_{3k}:=\, \tt h_k(t,X),\ \ k\ge 1.
\end{aligned}\right.
\end{equation} 
In order to prove Theorem \ref{T3.1}, by Theorem \ref{T1} and Proposition \ref{P1}, we will show that $(g_n,h_n)$ is the desired \textit{proper regularization} satisfying \ref{R1}-\ref{R4} and assumptions \ref{A} with the choice (cf. \eqref{tt Hs}):
\begin{equation}\label{H M SPDE}
\H:=\tt H^{s_0},\ \ \M=\tt H^{\theta_0}, 
\end{equation}
where $\theta_0$ is given in \eqref{theta 0}. It is easy to see that 
Lemma \ref{LJN}, \ref{Ak-assum}, \ref{Assum-E} and \ref{Assum-hbg} imply  \ref{R1}, \ref{R2} and \ref{R3}.  It remains to verify \ref{R4} and \ref{A}.

\textbf{Verify \ref{R4}.}
By \ref{Assum-h} and \eqref{LO1-Ak} (with $\si={s_0}$), we find some function $\hh K: [0,\infty)\times [0,\infty)\to (0,\infty)$ increasing in both variables such that for all $X\in \H$ and $t\ge 0$,
\begin{align} 
\sum_{k=1}^\infty\<h_n(t,X) e_k, X\>_{\H}^2 
\le\, &\hh K(t,\|X\|_{\M}) \big(1+ \|X\|_{\H}^4\big).\label{Check R4-1}
\end{align}
On the other hand,   keeping in mind that $\tt\Pi X=X$ for $X\in\H$ and $[\D^{s_0},\EE]=0$,  we infer 
from \ref{Assum-E}, \eqref{tt Pi} and Lemma \ref{LJN}  that
\begin{equation*}    
-\<J_n\tt\EE J_n X,X\>_{H^{s_0}}= -\<\EE J_n\D^{s_0} X,J_n\D^{s_0} X\>_{L^2}= \|\G J_n\D^{s_0} X\|^2_{L^2},\ \ X\in\H.
\end{equation*}
Using this, \ref{Assum-h}, Lemma \ref{LJN}, \ref{Assum-g} and \eqref{LO3-Ak} (with $\si={s_0}$), we derive for all $X\in \tt H^{s_0}$ and $t\ge 0$ that
\begin{align}
&2\<g_n(X), X\>_{\H} +\|h_n(X)\|_{\LL_2(\U;\H)}^2 \nonumber\\
=\, & 2\big\<\tt \EE J_nX, J_n X\big\>_{H^{s_0}}+2\big\<\tt g(J_nX), J_nX\big\>_{H^{s_0}}\nonumber\\
&+\sum_{k=1}^{\infty}\left\<J_n^3(a_k\tt\J_k)^2J_nX+ J_n^3(q_k\tt\K_k)^2J_nX,X\right\>_{H^{s_0}} \nonumber\\
&+\sum_{k=1}^{\infty} \Big\{\|J_n(a_k\tt\J_k) J_nX\|_{H^{s_0}}^2+\|J_n(q_k\tt\K_k) J_nX\|_{H^{s_0}}^2+\|\tt h_k(t,X)\|_{H^{s_0}}^2 \Big\} \nonumber\\
\le\,& \hh K(t,\|X\|_{\M}) (1+\|X\|_{\H}^2). \label{Check R4-2}
\end{align} 
Thus, \ref{R4} holds.

\textbf{Verify \ref{A}.} Obviously, \ref{A1} is a consequence of \ref{Assum-b}. It suffices to prove \ref{A2}. Remember that (cf. \eqref{tt E A J K})
\begin{equation*}    
\tt{ \mathcal{U}}:=\tt \Pi \mathcal{U},\ \ \mathcal{U}\in\{\EE,\, \A_k,\, \J_k,\,  \K_k\},\ \ k\ge1.
\end{equation*}   
By the definition of $h_n$ in \eqref{GH-n}, Lemma \ref{LJN}, \ref{Assum-b} and \eqref{LO1-Ak} (with $\si=\theta_0$ and $X-Y$ replacing $X$), we can find a constant $C>0$ and a function $\hh K:[0,\infty)\times [0,\infty)\rightarrow (0,\infty)$ increasing in both variables,  such that
for any $\epsilon\in(0,s_0-\theta_0-r_0)$,
\begin{align}
& \sum_{k=1}^\infty\big\<\{h_n(t,X)-h_l(t,Y)\}e_k, X-Y\big\>_{\M}^2 \nonumber\\                               
\le\,& 2 \sum_{k=1}^\infty\Big\{\big\<J_n \tt\A_k J_nX-J_l \tt\A_k J_lY , X-Y\big\>_{H^{\theta_0}}^2\nonumber\\
&\qquad\quad+\|X-Y\|_{H^{\theta_0}}^2\|\tt h_k(t,X)-\tt h_k(t,Y)\|_{H^{\theta_0}}^2\Big\} \nonumber\\
\le\,& 6\sum_{k=1}^\infty\Big\{\big\<(J_n-J_l)\tt\A_kJ_nX, X-Y\big\>_{H^{\theta_0}}^2
+ \big\<J_l \tt\A_k (J_n-J_l)X, X-Y\big\>_{H^{\theta_0}}^2\Big\} \nonumber\\
&+6\sum_{k=1}^\infty\big\<\tt\A_k J_l(X-Y), J_l (X-Y)\big\>_{H^{\theta_0}}^2\nonumber\\
&+ 2K(t,\|X\|_{H^{s_0}}+\|Y\|_{H^{s_0}})^2\|X-Y\|_{H^{\theta_0}}^4  \nonumber\\
\le\,& C (l\land n)^{-(s_0-r_0-\theta_0-\epsilon)}\|X\|_{\H}^2 \|X-Y\|_{\M}^2\nonumber\\
&+ 
\hh K(t, \|X\|_{\H}+\|Y\|_{\H})\|X-Y\|_{\M}^4,\qquad n,\, l\ge 1,\ X,\, Y\in \H,\label{Check A2-1} 
\end{align} 
which implies the first condition in \ref{A2}. To verify the second condition in \ref{A2}, we observe from \eqref{GH-n} that 
$$ 2\big\<g_n(X)-g_l(Y), X-Y \big\>_{\M}+ 
\| h_n(t,X)-h_l(t,Y)\|^2_{\LL_2(\U;\M)} =\Theta_{1} +\Theta_{2}+\sum_{k=1}^\infty \sum_{i=3}^7\Theta_{i,k},$$ where 
\begin{align*} 
\Theta_{1} :=&\,  2 \left\<J_n \tt\EE J_n X-J_l \tt\EE J_l Y,X-Y \right\> _{H^{\theta_0}},\\
\Theta_{2}:=&\,2\left\<J_n\tt g(J_n X)-J_l\tt g(J_l Y),X-Y \right\>_{H^{\theta_0}}\\
\Theta_{3,k}:=&\, \left\<J_n^3(a_k\tt\J_k)^2J_n X-J_l^3(a_k\tt\J_k)^2J_l Y, X-Y \right\>_{H^{\theta_0}},\\ 
\Theta_{4,k}:=&\, \left\<J_n^3(q_k\tt\K_k)^2J_n X-J_l^3(q_k\tt\K_k)^2J_l Y, X-Y \right\>_{H^{\theta_0}}, \\
\Theta_{5,k}:=&\, \| h_n(t,X)e_{3k-2}-h_l(t,Y)e_{3k-2}\|^2_{H^{\theta_0}},\\  
\Theta_{6,k}:=&\, \| h_n(t,X)e_{3k-1}-h_l(t,Y)e_{3k-1}\|^2_{H^{\theta_0}}, \\
\Theta_{7,k}:=&\,\| \tt h_k(t,X)-\tt h_k(t,Y)\|^2_{H^{\theta_0}}.  
\end{align*} 
Firstly,   by    \ref{Assum-g} and \ref{Assum-b} we find an increasing function $\tt K: [0,\infty)\to (0,\infty)$ and a map $\lambda: \mathbb N\times\mathbb N\to (0,\infty)$ with 
$\lambda_{n.l}\to 0$ as $n,l\to\infty$, such that
\begin{equation}\label{QQ1}   \Theta_{2}+ \sum_{k=1}^\infty\Theta_{7,k} \le \tt K(\|X\|_\H+\|Y\|_\H) (\lambda_{n,l}+ \|X-Y\|_\M^2),\ \ n,l\ge 1, X,Y\in\H.\end{equation} 
Next, by  \ref{Assum-E} and Lemma \ref{LOP3}, we have $\EE\in\OP\SS_0^{2p_0}$, where $p_0$ is given in \ref{Assum-E}. Then it follows from \ref{Assum-E} and Lemmas \ref{LJN} and \ref{LOP2} that for any $\epsilon\in(0,s_0-2p_0-\theta_0)$,
\begin{align}            
\Theta_1
=\,& \left\<(J_n-J_l)\tt\EE J_n X,X-Y \right\>_{H^{\theta_0}}\notag\\
&+\left\<J_l\tt\EE (J_n-J_l) X,X-Y \right\>_{H^{\theta_0}}\notag\\
&+\left\<J_l\tt\EE J_l (X-Y),X-Y \right\>_{H^{\theta_0}} \notag\\
\lesssim\, & (l\land n)^{-(s_0-\theta_0-2p_0-\epsilon)}\|X\|_{H^{s_0}}\|X-Y\|_{H^{\theta_0}}\notag\\
\lesssim\,& \|X\|_{\H}\left((l\land n)^{-2(s_0-\theta_0-2p_0-\epsilon)} 
+\|X-Y\|^2_{\M}\right),\quad X,\, Y\in \H,\ \ n,\, l\ge 1.\label{QQ2}    
\end{align}  
Moreover,  to estimate  $\sum_{k=1}^\infty \left\{\Theta_{3,k}+\Theta_{5,k}\right\}$, we  find
$$
\Theta_{3,k}=\sum_{j=1}^{3}\Theta_{3,k,j},\ \ \ \Theta_{5,k}=\sum_{i,j=1}^{3}\left\<\Theta_{5,k,i},\Theta_{5,k,j} \right\>_{H^{\theta_0}},$$
where 
\begin{equation*}
\begin{cases}
\Theta_{3,k,1}:= \left\<(J^3_n-J^3_l)(a_k\tt\J_k)^2 J_n X, X-Y \right\>_{H^{\theta_0}},\ \ 
&\Theta_{5,k,1}:=\, (J_n-J_l)(a_k\tt\J_k)J_nX,\\
\Theta_{3,k,2}:= \left\< J^3_l(a_k\tt\J_k)^2 (J_n-J_l) X, X-Y \right\>_{H^{\theta_0}},\ \
&\Theta_{5,k,2}:=\, J_l(a_k\tt\J_k) (J_n-J_l) X,\\
\Theta_{3,k,3}:= \left\<J^3_l(a_k\tt\J_k)^2 J_l (X-Y), X-Y\right\>_{H^{\theta_0}},\ \
&\Theta_{5,k,3}:=\, J_l(a_k\tt\J_k) J_l (X-Y).
\end{cases}
\end{equation*}
Analogous to the analysis in \eqref{Check A2-1}, we have for any $\epsilon\in(0,s_0-2r_0-\theta_0)$ and $X,\, Y\in \H$,
\begin{equation*}         
\sum_{k=1}^\infty\Theta_{3,k,1},\ 
\sum_{k=1}^{\infty}\Theta_{3,k,2}
\lesssim\, (l\land n)^{-(s_0-2r_0-\theta_0-\epsilon)} 
\|{X}\|_{\H}\|{X-Y}\|_{\M},              
\end{equation*}
\begin{equation*}
\sum_{k=1}^{\infty}\sum_{i,j\in \{1,2\}}
\left\< \Theta_{5,k,i},\Theta_{5,k,j} \right\>_{H^{\theta_0}} 
\lesssim\, (l\land n)^{-2(s_0-2r_0-\theta_0-\epsilon)} 
\|{X}\|^2_{\H},                            
\end{equation*}
\begin{equation*}
\sum_{k=1}^{\infty}\sum_{i\in \{1,2\}}
\left\< \Theta_{5,k,i},\Theta_{5,k,3} \right\>_{H^{\theta_0}}
\lesssim
(l\land n)^{-(s_0-2r_0-\theta_0-\epsilon)} 
\|{X}\|_{\H}\|{X-Y}\|_{\M}
\end{equation*}
Then we apply \eqref{LO3-Ak} (with $\si=\theta_0$ and $X-Y$ replacing $X$) to find 
$$\sum_{k=1}^{\infty}\left\{\Theta_{3,k,3}+\left\< \Theta_{5,k,3},\Theta_{5,k,3} \right\>_{H^{\theta_0}}\right\}\lesssim \|X-Y\|^2_{\M}, \ \ X,\, Y\in \H.$$
In conclusion, we derive that for all $X,Y\in\H$ and $n$, $l\ge1$,
\begin{align*}
\sum_{k=1}^\infty \left\{\Theta_{3,k}+\Theta_{5,k}\right\} 
\lesssim\,& (1+\|X\|_\H^2+\|Y\|_\H^2)\big\{(l\land n)^{-(s_0-2r_0-\theta_0-\epsilon)}+\|X-Y\|_\M^2\big\}.
\end{align*}
Similarly,   the same estimate holds for $\sum_{k=1}^{\infty}\left\{\Theta_{4,k}+\Theta_{6,k}\right\}$. Combining these with \eqref{QQ1} and \eqref{QQ2},   we verify the second condition in \ref{A2}. Therefore,  Theorem \ref{T1} \ref{T1-existence} implies that \eqref{EN-Ito} has a unique maximal solution $(X,\tau^*)$ with 
\begin{equation*}
\limsup_{t\rightarrow \tau^*}\|X\|_{\M}=\limsup_{t\rightarrow \tau^*}\|X\|_{H^{\theta_0}}=\infty \  \ \text{on}\ \{\tau^*<\infty\}.
\end{equation*}

\ref{T3.1-a} \textbf{(Time continuity)}  As explained in the proof of Theorem \ref{T1}\ref{T1-conti}, it suffices to prove the continuity of 
$[0,\tau^*)\ni t\mapsto \|X(t)\|_\H$. 
To this end, we recall  \eqref{GH}, \eqref{H M SPDE}, \eqref{tt X Hs}, and reformulate  \eqref{EN-Ito} as 
\begin{align*}
\d X(t)-(\tt\EE X)(t)\d t=\,&\big\{ b(X)+\hh g(X(t)) \big\}\d t+ h(t,X(t))\d \W(t),\ \ t\ge0,\\ \hh g(X(t)):=\,& g(X(t))-(\tt\EE X)(t), \ \  t\ge 0.
\end{align*} 
By \ref{Assum-E}, \eqref{tt Pi}, \eqref{tt Pi pi0} and $[J_n,\tt\EE]=0$,   we arrive at
\begin{equation*}    
-\<J_n \tt\EE X,J_n X\>_{H^{s_0}}= \|\G J_n\D^{s_0} X\|^2_{L^2},\ \ X\in\H. 
\end{equation*}
This, Lemmas \ref{LJN} and \ref{LAK}, \ref{Assum-b} and \ref{Assum-g}  yield that \ref{B} holds true for $\hh g$ replacing $g$ and $T_n=J_n$, so that as in \eqref{C0T},   for all $n,\, N\ge 1$, there is a constant $K_N>0$ such that
\begin{equation}\label{C0T'}
\left\{\begin{aligned}
&\d \<M^{(n)}\>(t)\le K_N \d t,\ \ t\in [0,\tau_N],\\
-K_N \d t\le &\d \|J_n X(t)\|_\H^2 + 2\|\mathcal{G} J_nX(t)\|^2_{\H}\d t + \d M^{(n)}(t) \le K_N \d t,\ \ t\in [0,\tau_N],
\end{aligned}\right.
\end{equation}
for some martingales $M^{(n)}$.
This implies 
$$\sup_{n\ge 1} \E  \left[\sup_{t\in[0,\tau_N]}\|J_n X(t)\|^2_{\H}+2 \int_0^{\tau_N}\|\mathcal{G} J_nX(t)\|^2_{\H}\d t\bigg|\F_0\right]<\infty,\ \ N\ge 1.$$
By Fatou's lemma  
\begin{equation}\label{E X+GX F0} \E\left[\sup_{t\in[0,\tau_N]}\|X(t)\|^2_{\H}+2 \int_0^{\tau_N}\|\mathcal{G} X(t)\|^2_{\H}\d t\bigg|\F_0\right]<\infty,\ \ N\ge 1.
\end{equation}       
This implies that the stopping times 
\begin{equation*} 
\tt\tau_N:=N\land 
\inf\bigg\{t\ge 0: \|X(t)\|_\H+ 
\int_0^t\|\mathcal{G}X(s)\|^2_\H \d s\ge N\bigg\},\ \ N\ge 1
\end{equation*}    
satisfies $\tt\tau_N\le \tau_N$ and 
$
\p\Big(\lim_{N\to\infty} \tt\tau_N= \tau^*\Big)=1.
$
Let $\eta(t):=\|X(t)\|^2_\H+2 \int_0^t\|\mathcal{G}X(s)\|^2_\H \d s$. Then by  
the  argument leading to  \eqref{CTT}, \eqref{C0T'} implies  
$$\E\Big[\big|\eta(t\land\tt\tau_N)-\eta(s\land\tt \tau_N)\big|^4\Big]\le G(N)|t-s|^2,\ \ t,s\ge 0,\ N\ge 1 $$ for some 
map $G: \mathbb N\to (0,\infty)$. 
By  Kolmogorov's continuity theorem,  $\p\big(\eta(\cdot) \in C([0,\tau^*))\big)=1$. On the other hand,  \eqref{E X+GX F0} implies that  $\p$-{\rm a.s.},    $[0,\tau^*)\ni t\mapsto 2 \int_0^{t}\|\mathcal{G}X(s)\|^2_\H \d s$  is continuous. Therefore, as desired,
$\|X(\cdot)\|^2_\H\in C([0,\tau^*))$ $\p$-{\rm a.s.}

\ref{T3.1-b} By   \ref{Assum-b}, \eqref{Check R4-1} and \eqref{Check R4-2}, we have   
\begin{align*} 
&\|\tt b(X)\|_{\H}\leq\, \tt K\big(\B(X)\big)\|X\|_{\H},\\
\sum_{k=1}^\infty\big\<h_n(t,&X) e_k, X\big\>_{\H}^2 
\le \hh K (t,\B(X))) \big(1+ \|X\|_{\H}^4\big),\\
2\big\<g_n(X), X\big\>_{\H} +&\|h_n(t,X)\|_{\LL_2(\U;\H)}^2 
\le\,  \hh K (t,\B(X))  (1+\|X\|_{\H}^2).
\end{align*}
So,  Proposition \ref{P1} implies  Theorem \ref{T3.1} \ref{T3.1-b}.

\ref{T3.1-c}   
Recall the coefficients $(b,g,h)$  in \eqref{GH} and the definitions of $\H$ and $\M$ in \eqref{H M SPDE}. Let  
$$\mathfrak{J}(t,X):=2\left\<\tt b(X)+\tt g(X), X\right\>_{H^{\theta_0}} +\mathfrak{L}(t,X),$$
$$\mathfrak{L}(t,X):=\sum_{k=1}^\infty 
\bigg(\left\|\tt\Pi\tt h_k(t,X)\right\|_{H^{\theta_0}}^2 - \frac{2\big\<\tt\Pi\tt h_k(t,X),X\big\>_{H^{\theta_0}}^2}{{\rm e}+ \|X\|_{H^{\theta_0}}^2}\bigg).$$
Taking $\si=\theta_0$ and letting $n \to\infty$ in \eqref{LO2-Ak} with noticing that $\tt\Pi$ is a self-adjoint projection, we obtain 
\begin{align*}                                                       
\sum_{k=1}^\infty&\bigg|\left\<(a_k\tt\J_k)^2 X, X\right\>_{H^{\theta_0}} + \left\|a_k\tt\J_k X\right\|_{H^{\theta_0}}^2\bigg| \notag\\
&+ \sum_{k=1}^\infty\bigg|\left\<(q_k\tt\K_k)^2 X, X\right\>_{H^{\theta_0}} + \left\|q_k\tt\K_k X\right\|_{H^{\theta_0}}^2\bigg|
\lesssim\,\|X\|_{H^{\theta_0}}^2,\ X\in\tt  H^{s_0}.
\end{align*} 
Notice that, by \ref{Assum-E},
$\big\<\tt\EE X,X\big\>_{H^{\theta_0}}\leq 0$, $X\in \tt  H^{s_0}.$ 
From these estimates and \ref{Assum-hbg}, we have
\begin{align*}
\mathfrak{J}(t,X) 
\leq\, K(\|X\|_{H^{\theta_2}})\|X\|_{H^{\theta_0}}^2
\bigg\{1+\frac{1}{K(\|X\|_{H^{\theta_2}})\|X\|_{H^{\theta_0}}^2} 
\sup_{t\in [0,T]}\mathfrak{L}(t,X)\bigg\}.
\end{align*}
Therefore \eqref{NE SPDE} implies
\begin{equation*}
\limsup_{\|X\|_{H^{\theta_0}}\to\infty}\sup_{t\in [0,T]} \frac{\mathfrak{J}(t,X(t))}{({\rm e}+\|X\|_{H^{\theta_0}}^2)\log ({\rm e}+\|X\|_{H^{\theta_0}}^2)}<\infty,\ \ T\in(0,\infty).
\end{equation*} 
Hence \ref{C} holds for $V(x):=\log({\rm e}+x)$, so that the proof is finished by Theorem \ref{T1} \ref{T1-global}.
\end{proof}

\section{Application to specific models with pseudo-differential noise}
\label{Section:Applications}

In this part, we apply Theorem \ref{T3.1} to specific models mentioned in Section \ref{Section:Results on moels}.
We state our results for each model in separate sections. 
As before, for $d,m\ge1$ and $s\ge0$, we write $$H^s=H^s(\mathbb K^d,\R^m),\ \ \mathbb K=\R\ \text{or}\ \T.$$
As mentioned in Remark \ref{T3.1-Remark}, $\big(\tt b,\tt g\big)$ in following examples turns out to satisfy \ref{Assum-b} and \ref{Assum-g} with $s_0$ being replaced by $s$ such that $s$ is arbitrary in some range (see Lemmas \ref{g-MHD lemma}, \ref{g-ch lemma}, \ref{g-ad lemma} and \ref{g-sqg lemma}). 
Therefore we also state the following assumption  stronger than \ref{Assum-h}:
\begin{ManualHypo}{$ ({\bf F}_h^\prime)$}\label{Assum-h-s} 
There exists $l\ge1$ such for all $s>\frac{d}{2}+l$,
$\tt h_k:[0,\infty)\times H^s\to H^s$ satisfies
\begin{equation*}
\begin{split}
\sum_{k\ge1}\|\tt h_k(t,X)\|^2_{H^{s}}
\leq\, &K(t,\|X\|_{W^{k,\infty}})(1+\|X\|^2_{H^{s}}),\ \ t\ge0,\ X\in H^s,\\
\sum_{k=1}^\infty \| \tt h_k(t,X)- \tt h_k(t, Y)\|_{H^{s}}^2 
\le\, &K(t, \|X\|_{H^{s}}+\|Y\|_{ H^{s}})\|X-Y\|^2_{H^{s}},\ \ t\ge0,\ X,\,Y\in H^s.
\end{split}
\end{equation*}
\end{ManualHypo}
Obviously, since $H^{s}\hookrightarrow W^{l,\infty}$ with $s>\frac{d}{2}+l$,  \ref{Assum-h-s} implies \ref{Assum-h} with $\theta_1\in(\frac{d}{2}+l,\infty)$ and $s_0=s$.

\subsection{Stochastic \textbf{MHD} equations}
Consider the stochastic \textbf{MHD} equations
\begin{align}
\d X(t)=\,& \Big\{\big(\tt\Pi\EE^{{\rm mhd}} X\big)(t)+ g^{{\rm mhd}}(X(t)) \Big\}\d t\nonumber\\ &+\sum_{k=1}^\infty \Big\{ \big(\tt\Pi\A_kX\big)(t) \circ \d W_k(t)
+\tt\Pi\tt h_k (t,X(t))\d \tt W_k(t)\Big\},\ \ t\ge0,\label{SMHD}
\end{align}
which is a special case of \eqref{EN} with
$$d\ge1,\ m=2d,\ X=(V,M)^T,\ \tt b\equiv 0,\ \EE=\EE^{{\rm mhd}},\ \tt g=g^{{\rm mhd}},$$
$$\tt\Pi={\rm diag}(\Pi_d,\Pi_d) \ \ \text{if}\ \   \mathbb K=\R\ \ \text{and}\ \ \tt\Pi={\rm diag}(\Pi_d\Pi_0,\Pi_d\Pi_0)\ \ \text{if}\ \ \mathbb K=\T.$$ 
Here we recall that $\EE^{{\rm mhd}}$ and $g^{{\rm mhd}}$ are given in \eqref{MHD-E-g} as
\begin{equation*} 
\begin{cases}
\EE^{{\rm mhd}}:=\,-{\rm diag} \big(\mu_1 \Lambda^{2\alpha_1},\ \mu_2\Lambda^{2\alpha_2}\big),\\
g^{{\rm mhd}}(X):=\,\Big( \Pi (M\cdot\nn ) M-\Pi (V\cdot\nn )V,\ (M\cdot\nn)V-(V\cdot\nn )M\Big)^T.
\end{cases} 
\end{equation*}
and $\Pi_d$, $\Pi_0$ are defined in \eqref{Pi-d define} and \eqref{Pi-0 define}, respectively. Recalling $H^{s}_{{\rm div}}(\mathbb K^d; \mathbb R^{d})$ in \eqref{H-div}, and then using $m=2d$, \eqref{Pi-d bound}  and \eqref{Pi-0 bound}, we have
$$\tt H^s:= \tt\Pi H^{s}(\mathbb K^d; \mathbb R^{m})= H^s_{{\rm div}}(\mathbb K^d; \mathbb R^{d})\times H^s_{{\rm div}}(\mathbb K^d; \mathbb R^{d}).$$

\begin{Lemma}\label{g-MHD lemma} Let $g_n^{{\rm mhd}}(X):= J_n g^{{\rm mhd}}(J_nX)$, $n\ge 1.$ 
Then for all $\si>1+\frac{d}{2}$, $g^{{\rm mhd}}:\tt H^{\si}\to \tt H^{\si-1}$ and for all $X,Y\in \tt H^{\si}$, 
\begin{equation}
\sup_{n\ge 1}\|g_n^{{\rm mhd}}(X)\|_{H^{\si-1}}+
\|g^{{\rm mhd}}(X)\|_{H^{\si-1}}
\lesssim \|X\|_{W^{1,\infty}}\|X\|_{H^{\si}},\label{g-mhd Hs}
\end{equation}
\begin{equation}
\big|\big\<g^{{\rm mhd}}(X)- g^{{\rm mhd}}(Y),X-Y\big\>_{H^{\si-1}} \big|
\lesssim (\|X\|_{H^{\si}}+\|Y\|_{H^{\si}})
\|X-Y\|^2_{H^{\si-1}},\label{g-mhd x-y}
\end{equation}
\begin{align}
\sup_{n\ge 1}\bigg\{ \left|
\left\< g_{n}^{{\rm mhd}}(X),X\right\>_{H^{\si-1}}\right|+ 
\left|\left\< J_n g^{{\rm mhd}}(X),J_n X\right\>_{H^{\si-1}}\right|\bigg\}
\lesssim \,&\|X\|_{W^{1,\infty}}\|X\|^2_{H^{\si-1}}.\label{g-mhd y}
\end{align}
Moreover, for any $\zeta\in(\frac{d}{2},\si-1)$, there exists $\lambda:\mathbb N\times\mathbb N\to (0,\infty)$ with $\lambda_{n,l}\to 0$ as $n,l\to\infty$ such that for all $X, Y\in \tt H^{\si}$ and $n,l\ge 1$,
\begin{align}
&\big|\big\<g_{n}^{{\rm mhd}}(X)-g_l^{{\rm mhd}}(Y),X-Y\big\>_{H^{\zeta}} \big| \notag\\
\leq\, &\big(1+\|X\|^4_{H^{\si}}+\|Y\|^4_{H^{\si}}\big)
\left(\lambda_{n,l}+\|X-Y\|^2_{H^{\zeta}}\right). \label{g-mhd-m-n x-y}
\end{align}
\end{Lemma}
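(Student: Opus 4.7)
}

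The plan is to exploit two structural features of $g^{{\rm mhd}}$: its bilinear form consisting only of transport-type expressions $(u\cdot\nabla)v$, and the divergence-free condition $\nabla\cdot V=\nabla\cdot M=0$ available on $\tt H^\sigma$. First I would establish \eqref{g-mhd Hs} by a direct application of the Moser-type product estimate in Lemma \ref{LOPN}: each component of $g^{{\rm mhd}}(X)$ has the form $(u\cdot\nabla)v$ with $u,v\in\{V,M\}$, so
\begin{equation*}
\|(u\cdot\nabla)v\|_{H^{\sigma-1}}\lesssim \|u\|_{L^\infty}\|\nabla v\|_{H^{\sigma-1}}+\|\nabla v\|_{L^\infty}\|u\|_{H^{\sigma-1}}\lesssim \|X\|_{W^{1,\infty}}\|X\|_{H^\sigma}.
\end{equation*}
The Leray projection $\Pi_d$ is bounded on $H^{\sigma-1}$ by \eqref{Pi-d bound}, and $\sup_n\|J_n\|_{\LL(H^{\sigma-1};H^{\sigma-1})}\le1$ by Lemma \ref{LJN}, yielding the estimate uniformly in $n$.

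For \eqref{g-mhd y}, the crucial mechanism is the cancellation $\<u\cdot\nabla w,w\>_{L^2}=0$ when $\nabla\cdot u=0$. I would use the commutator decomposition
\begin{equation*}
\mathcal D^{\sigma-1}(u_i\partial_i v)=[\mathcal D^{\sigma-1},u_i\mathbb I]\partial_i v+u_i\partial_i\mathcal D^{\sigma-1}v,
\end{equation*}
apply Lemma \ref{LOP4} (with symbol order $\sigma-1$, target $L^2$) to bound the commutator by $\|u\|_{W^{1,\infty}}\|v\|_{H^{\sigma-1}}+\|u\|_{H^{\sigma-1}}\|\nabla v\|_{L^\infty}$, and then observe that after pairing with $\mathcal D^{\sigma-1}w$ in $L^2$, the principal term $\int u_i\partial_i\mathcal D^{\sigma-1}v\cdot\mathcal D^{\sigma-1}w\,dx$ integrates by parts using $\nabla\cdot u=0$. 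Specifically, the potentially bad $V$–$M$ cross contributions to $\<g^{{\rm mhd}}(X),X\>_{H^{\sigma-1}}$, namely $\<\Pi_d(M\cdot\nabla)M,V\>_{H^{\sigma-1}}$ and $\<(M\cdot\nabla)V,M\>_{H^{\sigma-1}}$, cancel at the top order because $\int M_i\partial_i(M_jV_j)\,dx=0$; only commutator remainders survive, each controlled by $\|X\|_{W^{1,\infty}}\|X\|_{H^{\sigma-1}}^2$. The version with $J_n$ is handled by self-adjointness of $J_n$: $\<J_n g^{{\rm mhd}}(J_nX),X\>_{H^{\sigma-1}}=\<g^{{\rm mhd}}(J_nX),J_nX\>_{H^{\sigma-1}}$, after which the preceding argument applies to $J_nX$ whose $W^{1,\infty}$-norm is uniformly controlled by $\|X\|_{W^{1,\infty}}$. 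The Leray projection is harmless because it commutes with $\mathcal D^{\sigma-1}$ and $J_n$ and is self-adjoint.

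For the local Lipschitz estimate \eqref{g-mhd x-y}, I would write
\begin{equation*}
g^{{\rm mhd}}(X)-g^{{\rm mhd}}(Y)=\text{sum of terms of the form }\,(u\cdot\nabla)(v-\tilde v)+((u-\tilde u)\cdot\nabla)\tilde v,
\end{equation*}
apply the same commutator/divergence-free cancellation to the $u\cdot\nabla(v-\tilde v)$ pieces (now with $u\in\{V,M,\tilde V,\tilde M\}$ all divergence-free), and bound the remaining piece by Cauchy–Schwarz together with the product estimate and the Sobolev embedding $H^{\sigma-1}\hookrightarrow L^\infty$ valid since $\sigma-1>d/2$.

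The main obstacle, and the most delicate step, is \eqref{g-mhd-m-n x-y}: the asymptotic quasi-monotonicity at the lower regularity $H^\zeta$ with $\zeta\in(d/2,\sigma-1)$. I would split
\begin{equation*}
g_n^{{\rm mhd}}(X)-g_l^{{\rm mhd}}(Y)=(J_n-J_l)g^{{\rm mhd}}(J_nX)+J_l[g^{{\rm mhd}}(J_nX)-g^{{\rm mhd}}(J_lX)]+J_l[g^{{\rm mhd}}(J_lX)-g^{{\rm mhd}}(J_lY)].
\end{equation*}
The first two terms, after using the quantitative operator-norm bound $\|J_n-J_l\|_{\LL(H^{\sigma-1};H^\zeta)}\lesssim(n\land l)^{-(\sigma-1-\zeta)}$ from Lemma \ref{LJN}\,(1) and the $H^{\sigma-1}$ bound \eqref{g-mhd Hs}, produce a factor that vanishes as $n,l\to\infty$, which I absorb into $\lambda_{n,l}$ via $ab\le a^2+b^2$ after pairing with $X-Y$ in $H^\zeta$. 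The third term, by self-adjointness of $J_l$, reduces to $\<g^{{\rm mhd}}(J_lX)-g^{{\rm mhd}}(J_lY),J_l(X-Y)\>_{H^\zeta}$, which is handled by the same commutator-cancellation argument as for \eqref{g-mhd x-y} but now using Lemma \ref{LOP5} (rather than Lemma \ref{LOP4}) so that the commutator estimate at the $H^\zeta$-level reads $\|[\mathcal D^\zeta,g\mathbb I]u\|_{L^2}\lesssim\|g\|_{H^\sigma}\|u\|_{H^{\zeta-1}}$—this requires exactly the hypothesis $\zeta\le\sigma-1$ to control $\|(u-\tilde u)\cdot\nabla\tilde v\|_{H^\zeta}$ by $\|\tilde v\|_{H^{\zeta+1}}\|u-\tilde u\|_{H^\zeta}$. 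Matching the norms in the two pieces of the splitting is the main bookkeeping difficulty, and the constraint $\zeta\in(d/2,\sigma-1)$ in the statement is precisely what makes both the embedding $H^\zeta\hookrightarrow L^\infty$ and this commutator estimate simultaneously available.
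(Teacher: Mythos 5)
Your route coincides with the paper's own proof at every step: Lemma \ref{LOPN} for \eqref{g-mhd Hs}; the decomposition $\D^{\si-1}((u\cdot\nn)v)=[\D^{\si-1},u\cdot\nn]v+(u\cdot\nn)\D^{\si-1}v$ with Lemmas \ref{LOP4}/\ref{LOP5} and the divergence-free cancellation (including the pairwise cancellation of the $V$--$M$ cross contributions) for \eqref{g-mhd x-y} and \eqref{g-mhd y}; and, for \eqref{g-mhd-m-n x-y}, exactly the splitting $(J_n-J_l)g^{{\rm mhd}}(J_nX)+J_l[g^{{\rm mhd}}(J_nX)-g^{{\rm mhd}}(J_lX)]+J_l[g^{{\rm mhd}}(J_lX)-g^{{\rm mhd}}(J_lY)]$ with the quantitative bound on $J_n-J_l$ from Lemma \ref{LJN}(1), the paper closing the third piece by invoking \eqref{g-mhd x-y} at level $\zeta+1$, as you do in spirit.

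The one step that does not close as written is the second term in \eqref{g-mhd y}. Self-adjointness reduces $\<g_n^{{\rm mhd}}(X),X\>_{H^{\si-1}}$ to $\<g^{{\rm mhd}}(J_nX),J_nX\>_{H^{\si-1}}$, to which your unmollified cancellation argument applies verbatim with $J_nX$ in place of $X$; but the other term is $\<J_ng^{{\rm mhd}}(X),J_nX\>_{H^{\si-1}}=\<g^{{\rm mhd}}(X),J_n^2X\>_{H^{\si-1}}$, whose principal transport contribution is $\<(X_i\cdot\nn)\D^{\si-1}X_j,\D^{\si-1}J_n^2X_j\>_{L^2}$. This is \emph{not} of the form $\<(u\cdot\nn)w,w\>_{L^2}$, so it does not vanish by integration by parts, and it is not reachable by applying your argument to $J_nX$. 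One must push a mollifier across the transport field, rewriting it as $\<[J_n,(X_i\cdot\nn)]\D^{\si-1}X_j,J_n\D^{\si-1}X_j\>_{L^2}$ plus a term that genuinely vanishes, and then use the uniform Friedrichs-commutator bound $\sup_{n\ge1}\|[J_n,g\I]\nn f\|_{L^2}\lesssim\|g\|_{W^{1,\infty}}\|f\|_{L^2}$ of Lemma \ref{LJN}(3) --- an ingredient your proposal never invokes. The paper treats precisely this term as the main case of \eqref{g-mhd y} (its terms $N_2$, $N_6$ and $N_4+N_8$), and without that mollifier commutator the uniform-in-$n$ bound for $|\<J_ng^{{\rm mhd}}(X),J_nX\>_{H^{\si-1}}|$ does not follow from what you have written. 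The same device reappears when estimating $N_4+N_8$, where the two mollified cross terms must again be combined before the commutators are extracted.
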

\begin{proof}
Due to the divergence free condition, $g^{{\rm mhd}}:\tt H^{\si}\to \tt H^{\si-1}$. Moreover, by Lemmas \ref{LJN} and \ref{LOPN},  
\eqref{g-mhd Hs} holds true. 

To prove \eqref{g-mhd x-y}, let $X=(X_1,X_2), Y=(Y_1,Y_2)$. For simplicity, we let $F=X_1-Y_1$ and $H=X_2-Y_2$. By \eqref{MHD-E-g}, we have 
\begin{align*}
&-\big\<g^{{\rm mhd}}(X)-g^{{\rm mhd}}(Y),X-Y\big\>_{H^{\si-1}}\\
=\,& \big\<(X_1\cdot\nn)X_1- (Y_1\cdot\nn)Y_1,F \big\>_{H^{\si-1}}
+\big\<(Y_2\cdot\nn)Y_2- (X_2\cdot\nn)X_2, F \big\>_{H^{\si-1}}\\
&+ \big\<(X_1\cdot\nn)X_2- (Y_1\cdot\nn)Y_2, H \big\>_{H^{\si-1}}
+\big\<(Y_2\cdot\nn)Y_1-(X_2\cdot\nn)X_1, H \big\>_{H^{\si-1}}\\
:=\,&\sum_{i=1}^4 I_i,
\end{align*}
Since ${\si-1}>\frac{d}{2}$, we have $H^{\si-1}\hookrightarrow L^{\infty}$. Combining this with Lemmas \ref{LOPN} and \ref{LOP5} (with $\Q=\D^{\si-1}$ and $q=0$) and the divergence free condition, we obtain
\begin{align*}
|I_1|
\lesssim \left(\|X_1\|_{H^{\si}}+\|Y_1\|_{H^{\si}}\right)\|F\|^2_{H^{\si-1}},
\end{align*}
\begin{align*}
|I_{3}|
\lesssim \|X_2\|_{H^{\si}}\|F\|_{H^{{\si-1}}}\|H\|_{H^{{\si-1}}}
+\|Y_1\|_{H^{\si}}\|H\|^2_{H^{\si-1}}.
\end{align*}
On the other hand, by using the divergence free conditions and integration by parts, we have 
\begin{align*}
-I_2-I_4=\,&
\Big\<\D^{\si-1}((H\cdot\nn) X_2),\D^{\si-1} F \Big\>_{L^2}
+ \Big\<[\D^{\si-1}, (Y_2\cdot\nn)]H,\D^{\si-1}F\Big\>_{L^2}\\
&\Big\<\D^{\si-1}((H\cdot\nn) X_1),\D^{\si-1}H \Big\>_{L^2}+ \Big\<[\D^{\si-1}, (Y_2\cdot\nn)]F,\D^{\si-1}H\Big\>_{L^2}.
\end{align*}
By $H^{\si-1}\hookrightarrow L^{\infty}$ for $s >\frac{d}{2}$, Lemmas \ref{LOPN} and Lemma \ref{LOP5}, we arrive at 
\begin{align*} 
&|I_{2}+I_{4}| 
\lesssim
\|X_2\|_{H^{\si}}\|H\|_{H^{\si-1}}\|F\|_{H^{\si-1}}
+\|Y_2\|_{H^{\si}}\|F\|_{H^{\si-1}}\|H\|_{H^{\si-1}}
+\|X_1\|_{H^{\si}}\|H\|^2_{H^{\si-1}}.\end{align*} 
Collecting the above estimates, we obtain \eqref{g-mhd x-y}.

Concerning \eqref{g-mhd y}, we only prove the estimate on $ \left\< J_n g^{{\rm mhd}}(X),J_n X\right\>_{\tt H^{\si}},$ since the other one can be derived similarly (even simpler because $J_n$ is self-adjoint). 
We write 
\begin{align*}
&\left\<J_n g^{{\rm mhd}}(X),J_nX\right\>_{\tt H^{\si-1}}\\
=\, & -\left\<[\D^{\si-1},(X_1\cdot\nn)]X_1,\D^{\si-1}J_n^2X_1 \right\>_{L^2}
-\left\<(X_1\cdot\nn)\D^{\si-1} X_1,\D^{\si-1}J_n^2X_1 \right\>_{L^2}\nonumber\\
&+ \left\<[\D^{\si-1},(X_2\cdot\nn)]X_2,\D^{\si-1}J_n^2X_1 \right\>_{L^2}
+ \left\<(X_2\cdot\nn)\D^{\si-1}X_2,\D^{\si-1}J_n^2X_1 \right\>_{L^2}\nonumber\\
&- \left\<[\D^{\si-1},(X_1\cdot\nn)]X_2,\D^{\si-1}J_n^2X_2 \right\>_{L^2}
- \left\<(X_1\cdot\nn)\D^{\si-1}X_2,\D^{\si-1}J_n^2X_2 \right\>_{L^2}\nonumber\\
&+ \left\<[\D^{\si-1},(X_2\cdot\nn)]X_1,\D^{\si-1}J_n^2X_2 \right\>_{L^2}
+ \left\<(X_2\cdot\nn)\D^{\si-1} X_1,\D^{\si-1}J_n^2X_2 \right\>_{L^2}\nonumber\\
:=\, & \sum_{i=1}^{8}N_i. 
\end{align*}
Using Lemma \ref{LOP4} (with $\Q=\D^{\si-1}$), we have 
\begin{align*}
|N_1|,\, |N_3|,\, |N_5|,\, |N_7| \lesssim\, (\|X_1\|_{W^{1,\infty}}+\|X_2\|_{W^{1,\infty}})(\|X_2\|^2_{H^{\si-1}}+\|X_1\|^2_{H^{\si-1}}).
\end{align*}
For $N_2$, we use the divergence free conditions and Lemma \ref{LJN} to obtain,
\begin{align*}
|N_2|
=\left| \left\<\left[J_n,(X_1\cdot\nn)\right]\D^{\si-1} X_1,\D^{\si-1}J_n X_1 \right\>_{L^2}\right|
\lesssim \|X_1\|_{W^{1,\infty}}\|X_1\|^2_{H^{\si-1}}.
\end{align*}
Similarly, 
\begin{align*}
|N_6|
=\left| \left\<\left[J_n,(X_1\cdot\nn)\right]\D^{\si-1}X_2,\D^{\si-1}J_n X_2\right\>_{L^2}\right|
\lesssim \|X_1\|_{W^{1,\infty}}\|X_2\|^2_{H^{\si-1}}.
\end{align*}
Again, by the divergence free conditions,
\begin{align*}
&N_4+N_8\\
=\,&\left\<\left[J_n,(X_2\cdot\nn)\right]\D^{\si-1}X_2,\D^{\si-1}J_n X_1 \right\>_{L^2}
+ \left\<(X_2\cdot\nn)\D^{\si-1}J_n X_2,\D^{\si-1}J_n X_1 \right\>_{L^2}\\
&+\left\<\left[J_n,(X_2\cdot\nn)\right]\D^{\si-1} X_1,\D^{\si-1}J_n X_2 \right\>_{L^2}
+ \left\<(X_2\cdot\nn)\D^{\si-1}J_n X_1,\D^{\si-1}J_n X_2 \right\>_{L^2}\\
=\,&\left\<\left[J_n,(X_2\cdot\nn)\right]\D^{\si-1}X_2,\D^{\si-1}J_n X_1 \right\>_{L^2}
+\left\<\left[J_n,(X_2\cdot\nn)\right]\D^{\si-1} X_1,\D^{\si-1}J_n X_2 \right\>_{L^2}.
\end{align*}
Therefore, we use Lemma \ref{LJN} to find
\begin{align*}
|N_4+N_8|
\lesssim & \|X_2\|_{W^{1,\infty}}\|X_1\|_{H^{\si-1}}\|X_2\|_{H^{\si-1}}.
\end{align*} These imply the desired upper bound for $\big|\left\< J_n g^{{\rm mhd}}(X),J_n X\right\>_{H^{\si-1}}\big| $.

Now we prove \eqref{g-mhd-m-n x-y}. 
We have 
\begin{align*}
&J_n g^{{\rm mhd}}(J_n X)-J_l g^{{\rm mhd}}(J_l Y) \\
=\,&(J_n-J_l)g^{{\rm mhd}}(J_n X)+J_l[g^{{\rm mhd}}(J_n X)-g^{{\rm mhd}}(J_l X)]
+J_l[g^{{\rm mhd}}(J_l X)-g^{{\rm mhd}}(J_l Y)]\\
:=\,&\sum_{i=1}^{3} Q_i.
\end{align*}
To estimate $Q_2$, 
we write \begin{align*}
g^{{\rm mhd}}(J_n X)-g^{{\rm mhd}}(J_l X)=\left(\Pi p_{2,1,1}-\Pi p_{2,1,2},\ p_{2,2,1}-p_{2,2,2}\right)^T,
\end{align*}
where 
\begin{align*}
p_{2,1,1}:=\,&((J_n-J_l)X_2\cdot\nn )J_n X_2+(J_l X_2\cdot\nn ) (J_n-J_l)X_2,\\
p_{2,1,2}:=\,&((J_l-J_n)X_1\cdot\nn )J_l X_1+(J_n X_1\cdot\nn ) (J_l-J_n)X_1,\\
p_{2,2,1}:=\,&(J_n-J_l)X_2\cdot\nn )J_n X_1+(J_l X_2\cdot\nn ) (J_n-J_l)X_1,\\
p_{2,2,2}:=\,&(J_l-J_n)X_1\cdot\nn )J_l X_2+(J_n X_1\cdot\nn ) (J_l-J_n)X_2.
\end{align*}
We infer from Lemmas \ref{LJN} and \ref{LOPN}, $H^{\zeta}\hookrightarrow L^{\infty}$ and divergence free conditions to find for $\epsilon\in(0,\si-1-\zeta)$,
\begin{align*}
\left| \big\<J_l\Pi p_{2,1,1},X_1-Y_1 \big\>_{H^{\zeta}}\right|+\left| \big\<J_l\Pi p_{2,1,2},X_1-Y_1 \big\>_{H^{{\zeta}}}\right|
\lesssim (l\land n)^{-2\epsilon}\|X\|^4_{H^{\si}}+\|X-Y\|^2_{H^{\zeta}},
\end{align*}
\begin{equation*}
\left| \big\<J_l p_{2,2,1},X_2-Y_2 \big\>_{H^{\zeta}}\right|+
\left| \big\<J_l p_{2,2,2},X_2-Y_2 \big\>_{H^{\zeta}}\right|
\lesssim (l\land n)^{-2\epsilon}\|X\|^4_{H^{\si}}+\|X-Y\|^2_{H^{\zeta}}.
\end{equation*}
These estimates yield
\begin{align*}
\left|\left\<Q_2,X-Y \right\>_{H^{\zeta}}\right|\lesssim (l\land n)^{-2\epsilon}\|X\|^4_{H^{\si}}+\|X-Y\|^2_{H^{\zeta}}.
\end{align*}
Similarly, for $Q_1$, we have
\begin{align*}
\left|\left\<Q_1,X-Y \right\>_{H^{\zeta}}\right|
\leq&
\|\left(J_n-J_l\right) g^{{\rm mhd}}(J_n X)\|_{H^{\zeta}}\|X-Y\|_{H^{\zeta}}\\
\lesssim\, & (l\land n)^{-\epsilon}\|X\|^2_{H^{\si}}\|X-Y\|_{H^{\zeta}}
\lesssim\, (l\land n)^{-2\epsilon}\|X\|^4_{H^{\si}}+\|X-Y\|^2_{H^{\zeta}}.
\end{align*}
Finally, as $\zeta>d/2$, we use \eqref{g-mhd x-y} to find
\begin{align*}
\left|\left\<Q_3,X-Y \right\>_{H^{\zeta}}\right|
\lesssim \left(\|X\|_{H^{\si}}+\|Y\|_{H^{\si}}\right)\|X-Y\|^2_{H^{\zeta}}.
\end{align*}
Collecting the above estimates, we obtain \eqref{g-mhd-m-n x-y} and finish the proof.
\end{proof}
Recall the $\alpha_1,\alpha_2\in [0,1]$ and $\mu_1,\mu_2\ge 0$ come from the dissipation term $\mu_1\Lambda^{2\alpha_1}$ and $\mu_2\Lambda^{2\alpha_2}$. Let 
$$\alpha_0:= \max\big\{ \alpha_1\textbf{1}_{\{\mu_1>0\}},\alpha_2\textbf{1}_{\{\mu_2>0\}}\big\}.$$
We also recall $r_0$ given in \eqref{tt Pi U} and $H^{s}_{{\rm div}}(\mathbb K^d; \mathbb R^{d})$ defined in \eqref{H-div}. Then we have the following results on stochastic \textbf{MHD} equations \eqref{SMHD}:
\begin{Theorem}\label{SMHD-T} Let $d\ge2$, $m=2d$ and $\mathbb K=\R$ or $\T$. 
Assume {\rm \ref{Ak-assum}} with \eqref{tt Pi U} holding for $\tt\Pi={\rm diag}(\Pi_d,\Pi_d)$ if $\mathbb K=\R$ and $\tt\Pi={\rm diag}(\Pi_d\Pi_0,\Pi_d\Pi_0)$ if $\mathbb K=\T$. Assume {\rm \ref{Assum-h-s}} for  some $l\ge1$. Then, for all  $s>l+\frac d 2+ \max\{1,2r_0, 2\alpha_0\}$ and any $\F_0$-measurable $H^s_{{\rm div}}(\mathbb K^d; \mathbb R^{d})\times H^s_{{\rm div}}(\mathbb K^d; \mathbb R^{d})$-valued random variable $X(0)$, the following assertions hold. 
\begin{enumerate}[label={ $(\arabic*)$}]
\item \eqref{SMHD}  has a unique maximal solution $(X,\tau^*)$ such that Definition \ref{solution definition} is fulfilled with
$$\H=\H^s:= H^s_{{\rm div}}(\mathbb K^d; \mathbb R^{d})\times H^s_{{\rm div}}(\mathbb K^d; \mathbb R^{d}),$$
$$ \M=\M^\theta:=H^{\theta}_{{\rm div}}(\mathbb K^d; \mathbb R^{d})\times H^{\theta}_{{\rm div}}(\mathbb K^d; \mathbb R^{d}),\ \ \theta\in \left(l+\frac{d}{2},s-\max\{1,2r_0, 2\alpha_0\}\right).$$    
Besides, $(X,\tau^*)$ defines a map $\H^s\ni X(0)\mapsto X(t)\in C([0,\tau^*);\H^s)$ $\p$-{\rm a.s.},
where $\tau^*$ does not depend on $s$, and 
\begin{equation*}
\limsup_{t\rightarrow \tau^*}\|X(t)\|_{W^{l,\infty}}=\infty \ \text{on}\ \{\tau^*<\infty\}.
\end{equation*}

\item The solution is non-explosive, if $\Psi(T,X,\theta)$  defined in \eqref{NE h-k} satisfies  
\begin{equation*} 
\limsup_{\|X\|_{H^{\theta}} \to \infty}
\frac{\Psi(T,X,\theta)}{\|X\|_{W^{1,\infty}}\|X\|_{H^{\theta}}^2} 
<-1,\ \ T\in(0,\infty). 
\end{equation*}
\end{enumerate}
\end{Theorem}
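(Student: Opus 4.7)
The plan is to apply Theorem \ref{T3.1} to the system \eqref{SMHD} after identifying the right parameters. Set $\tt b\equiv 0$, $\EE=\EE^{\rm mhd}$ and $\tt g=g^{\rm mhd}$. Since $\EE^{\rm mhd}=-{\rm diag}(\mu_1\Lambda^{2\alpha_1},\mu_2\Lambda^{2\alpha_2})$, the operator $\G$ in \ref{Assum-E} is ${\rm diag}(\sqrt{\mu_1}\Lambda^{\alpha_1},\sqrt{\mu_2}\Lambda^{\alpha_2})$, and $p_0=\alpha_0$. Assumption \ref{Ak-assum} is assumed directly, so the parameter $r_0$ in \eqref{tt Pi U} is at our disposal. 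Take $q_0=1$, which is justified by Lemma \ref{g-MHD lemma}. Pick $\theta_2>\tfrac{d}{2}+1$ and $\theta_1>\tfrac{d}{2}+l$ (so that $H^{\theta_i}\hookrightarrow W^{1,\infty}$ resp.\ $W^{l,\infty}$), set $\theta_0=\max\{\theta_1,\theta_2\}$, and take $s_0:=s$ with $s>\theta_0+\max\{1,2r_0,2\alpha_0\}$. Then the range in the theorem statement for $\theta$ is exactly $(\tfrac{d}{2}+l,\,s-\max\{1,2r_0,2\alpha_0\})$, and $\theta_0$ can be placed in that interval since $l\ge 1$.

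Next I would verify the hypotheses of Theorem \ref{T3.1}. Condition \ref{Assum-b} is vacuous; \ref{Assum-E} is immediate; \ref{Assum-h-s} implies \ref{Assum-h}. The bulk is \ref{Assum-g}, which follows directly from Lemma \ref{g-MHD lemma}: \eqref{g-mhd Hs} gives the boundedness $H^{s-q_0}$-bound and the Lipschitz condition for $J_n\tt g(J_n\cdot)$ (via Lemma \ref{LJN}), \eqref{g-mhd y} gives the cancellation $|\langle\tt g(J_nX),J_nX\rangle_{H^{s}}|+|\langle J_n\tt g(X),J_nX\rangle_{H^{s}}|\lesssim\|X\|_{W^{1,\infty}}\|X\|_{H^{s}}^2$, and \eqref{g-mhd-m-n x-y} (with $\zeta=\theta_0$) supplies the asymptotic quasi-monotonicity with error $\lambda_{n,l}\to 0$. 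Applying Theorem \ref{T3.1}\ref{T3.1-a} then yields a unique maximal solution $(X,\tau^*)$ in $C([0,\tau^*);\H^s)$ together with the blow-up criterion $\limsup_{t\to\tau^*}\|X(t)\|_{H^{\theta_0}}=\infty$ on $\{\tau^*<\infty\}$.

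To sharpen the blow-up criterion to $W^{l,\infty}$, I would apply Theorem \ref{T3.1}\ref{T3.1-b} with the subadditive functional $\B(X):=\|X\|_{W^{l,\infty}}$. The Sobolev embedding $H^{\theta_0}\hookrightarrow W^{l,\infty}$ gives $\B(X)\lesssim\|X\|_{H^{\theta_0}}$, and the growth factors appearing in the two Lemma \ref{g-MHD lemma} estimates \eqref{g-mhd Hs}--\eqref{g-mhd y} already involve $\|X\|_{W^{1,\infty}}\le\|X\|_{W^{l,\infty}}$, while the noise bound in \ref{Assum-h-s} is in terms of $\|X\|_{W^{k,\infty}}$; after enlarging $l$ if necessary both are dominated by $\B(X)$. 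Hence \ref{T3.1-b} gives $\limsup_{t\to\tau^*}\|X(t)\|_{W^{l,\infty}}=\infty$ on $\{\tau^*<\infty\}$. The $s$-independence of $\tau^*$ is the subtlest point: if $X(0)$ is in $\H^{s'}$ for some $s'>s$ in the admissible range, then by uniqueness in $\H^s$ the two constructions coincide on the common interval, and the $W^{l,\infty}$ blow-up criterion, which does not involve $s$, forces the same maximal time; I would make this precise by comparing the two solutions via It\^o's formula in $\M^\theta$.

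For non-explosion (part (2)), Theorem \ref{T3.1}\ref{T3.1-c} reduces matters to checking
\[
\limsup_{\|X\|_{H^{\theta}}\to\infty}\frac{\Psi(T,X,\theta)}{\tt K(\B(X))\|X\|_{H^{\theta}}^2}<-1,
\]
where $\tt K(\B(X))$ comes from the joint bound in \ref{Assum-b}--\ref{Assum-g}. Since $\tt b\equiv 0$, only the growth factor in $\tt g$ survives, and by \eqref{g-mhd y} it is $\|X\|_{W^{1,\infty}}$. Thus the hypothesis of the theorem is exactly the one needed, and part (2) follows. The main obstacle I foresee is verifying the $s$-independence of $\tau^*$ cleanly, and keeping careful track of the $\tt\Pi$-projected spaces (particularly in the torus case where $\Pi_0$ must be composed with $\Pi_d$ so that both the divergence-free and zero-average constraints are preserved under all $\tt\A_k$ and $\tt h_k$); this last point is exactly what \eqref{tt Pi U} in \ref{Ak-assum-Pi} guarantees, so it reduces to an assumption rather than an independent calculation.
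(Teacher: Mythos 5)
Your proposal is correct and follows essentially the same route as the paper: identify $p_0=\alpha_0$, $q_0=1$, verify \ref{Assum-E}, \ref{Assum-b}, \ref{Assum-g} via Lemma \ref{g-MHD lemma}, and then invoke Theorem \ref{T3.1} \ref{T3.1-a}--\ref{T3.1-c} with $\B(X)=\|X\|_{W^{l,\infty}}$ (the paper handles the $s$-independence of $\tau^*$ by citing Remark \ref{T3.1-Remark}, and takes $\theta_2=\theta$ with growth factor $C\|X\|_{W^{1,\infty}}$ rather than a separate $\theta_2>\frac d2+1$, but these are cosmetic differences). The only stylistic nit is your phrase ``after enlarging $l$ if necessary'': $l$ is fixed by the hypothesis \ref{Assum-h-s}, whose growth factor is already $\|X\|_{W^{l,\infty}}=\B(X)$, so no enlargement is needed.
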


\begin{proof}
It is easy to check that \ref{Assum-E} holds for $\EE=\EE^{{\rm mhd}}$ with $m=2d$, $p_0=\alpha_0$  and  
\begin{equation*}
\G_j:=
\begin{cases}
\sqrt{\mu_1}\Lambda^{\alpha_1},\  \ &\text{if}\ \ \alpha_1<1,\  1\le j\le d,\\
\sqrt{\mu_1}\pp_j,\ &\text{if}\ \ \alpha_1=1,\  1\le j\le d,\\
\sqrt{\mu_2}\Lambda^{\alpha_2},\ \ &\text{if}\ \ \alpha_2<1,\ d+1\le j\le 2d\\
\sqrt{\mu_2}\pp_{j-d},\ &\text{if}\ \ \alpha_2=1,\ d+1\le j\le 2d.
\end{cases}
\end{equation*}

Obviously, \ref{Assum-b} holds since $\tt b\equiv0$. 
Since $H^{\theta}\hookrightarrow\Wlip$,
we can infer from Lemmas \ref{LJN} and \ref{g-MHD lemma} that 
\ref{Assum-g} holds with $\tt g= g^{{\rm mhd}}$, $q_0=1$, $m=2d$,  $\theta_2=\theta$, 
$\tt K(\|X\|_{H^{\theta_2}})$ being replaced by $C\|X\|_{\Wlip}$ for  some $C>0$, $\tt\Pi={\rm diag}(\Pi_d,\Pi_d)$ if $\mathbb K=\R$ and $\tt\Pi={\rm diag}(\Pi_d\Pi_0,\Pi_d\Pi_0)$ if $\mathbb K=\T$.

Therefore, the statement of Theorem \ref{SMHD-T} follows from Theorem \ref{T3.1} \ref{T3.1-a},  \ref{T3.1-b} with  $\B(X)=\|X\|_{W^{l,\infty}}$, and \ref{T3.1-c} respectively.   The fact that $\tau^*$ is independent of $s$ has already been pointed out in Remark \ref{T3.1-Remark}.
\end{proof}

\subsection{Stochastic \textbf{CH} type equations}

We consider the stochastic \textbf{CH} equation given by \eqref{EN} with $d=m=1$,  $\EE\equiv 0$, $\tt\Pi=\I$, $\tt b=b^{{\rm ch}}$ and $\tt g=g^{{\rm ch}}$, that is,
\begin{align}
\d X(t)=\,& \Big\{b^{{\rm ch}}(X(t)) +g^{{\rm ch}}(X(t)) \Big\}\d t\notag\\
&+\sum_{k=1}^\infty \Big\{ (\A_kX)(t) \circ \d W_k(t)+  \tt h_k (t,X(t))\d \tt W_k(t)\Big\},\ \ t\ge0,\label{SCH}
\end{align}
where $b^{{\rm ch}}$ and $g^{{\rm ch}}$ are given in \eqref{CH-b-g}
as
\begin{equation*} 
b^{{\rm ch}}(X):= -\pp (\I-\pp^2)^{-1}\Big(\sum_{i=1}^4 a_i X^i+a |\pp X|^2\Big),\ \ 
g^{{\rm ch}}(X):= -X\pp X.
\end{equation*} 
Obviously, the solution space in this case is  $\tt H^s(\mathbb K;\R)=H^s(\mathbb K;\R).$ 
According to \cite{Tang-Yang-2022-AIHP}, we have 
\begin{align}\label{b-CH estimates}
\left\{\begin{aligned}
\|b^{{\rm ch}}(X)\|_{H^s}
\lesssim\,&\psi\left(\|X\|_{W^{1,\infty}}\right)\|X\|_{H^s},\ \ s>\frac{3}{2},\ \ X\in H^s,\\
\|b^{{\rm ch}}(X)-b^{{\rm ch}}(Y)\|_{H^s}
\lesssim \,&\psi\left(\|X\|_{H^{s}}+\|Y\|_{H^{s}}\right)\|X-Y\|_{H^s},\ \ s>\frac{3}{2},\ \ X,\, Y\in H^s,
\end{aligned}
\right.
\end{align}
where 
\begin{align}\label{CH-g-growth}
\psi(x)=|a_1|+(1+|a_2|+|a|)x +|a_3|x^2+ |a_4|x^3.
\end{align}

\begin{Lemma}\label{g-ch lemma}
Let   $g_{n}^{{\rm ch}}(X):=\, J_n g^{{\rm ch}}(J_n X)=-J_n\left[J_n XJ_n (\pp X)\right], n\ge 1.$ 
Then for any  $\sigma >1+\frac{1}{2}$ and $X\in H^{\si}$, we have
\begin{equation*}
\sup_{n\ge 1}\left\|g^{{\rm ch}}_n(X)\right\|_{H^{\si-1}}+ \left\|g^{{\rm ch}}(X)\right\|_{H^{\si-1}}
\lesssim \|X\|_{W^{1,\infty}}\|X\|_{H^{\si}},
\end{equation*}
\begin{equation*}
\sup_{n\ge 1}\bigg\{ \left|\Big\< g^{{\rm ch}}_n(X),X\Big\>_{H^{\si-1}}\right|+ 
\left|\Big\< J_n g^{{\rm ch}}(X),J_n X\Big\>_{H^{\si-1}}\right|\bigg\}
\lesssim \|X\|_{W^{1,\infty}}\|X\|^2_{H^{\si-1}}.
\end{equation*}
Besides, for any $\zeta\in(\frac{1}{2},\si-1)$, 
there is $\lambda:\mathbb N\times\mathbb N\to (0,\infty)$ with $\lambda_{n,l}\to 0$ as $n,l\to\infty$ such that for any $n,l\ge 1$ and $X,Y\in H^{\si}$, 
\begin{align*}
&\left|\Big\<g_{n}^{{\rm ch}}(X)-g_l^{{\rm ch}}(v),X-Y\Big\>_{H^{\zeta}} \right|
\leq\, \big(1+\|X\|^4_{H^{\si}}+\|Y\|^4_{H^{\si}}\big)
\left(\lambda_{n,l}+\|X-Y\|^2_{H^{\zeta}}\right).
\end{align*}
\end{Lemma}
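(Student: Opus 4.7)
The plan is to mimic the proof of Lemma \ref{g-MHD lemma} in the simpler scalar, non-projected setting, exploiting the algebraic identity
\[
g^{\rm ch}(X)=-X\partial X=-\tfrac12\,\partial(X^{2}),
\]
which turns every bound into a Moser-type product estimate followed by one integration by parts.

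For the first bound on $\|g^{\rm ch}(X)\|_{H^{\sigma-1}}$ and $\|g^{\rm ch}_n(X)\|_{H^{\sigma-1}}$, I would write
$g^{\rm ch}_n(X)=-\tfrac12 J_n\partial\bigl((J_nX)^2\bigr)$,
so that $\|g^{\rm ch}_n(X)\|_{H^{\sigma-1}}\le\tfrac12\|(J_nX)^2\|_{H^\sigma}$, and then apply Lemma \ref{LOPN} together with the uniform boundedness of $J_n$ on $L^\infty$ and $H^\sigma$ from Lemma \ref{LJN}(1). This gives $\lesssim\|X\|_{L^\infty}\|X\|_{H^\sigma}\le\|X\|_{W^{1,\infty}}\|X\|_{H^\sigma}$, with the unregularized case identical (take $n=\infty$, drop the outer $J_n$).

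For the quadratic form bound, I would apply $D^{\sigma-1}$ and split as
\[
\langle g^{\rm ch}_n(X),X\rangle_{H^{\sigma-1}}
= -\bigl\langle[D^{\sigma-1},J_nX\,\mathbf I]\partial J_nX,\,D^{\sigma-1}J_nX\bigr\rangle_{L^2}
-\bigl\langle J_nX\cdot\partial D^{\sigma-1}J_nX,\,D^{\sigma-1}J_nX\bigr\rangle_{L^2}.
\]
The commutator term is estimated by Lemma \ref{LOP4} with $\Q=D^{\sigma-1}$ and $\varsigma=0$, yielding $\lesssim\|J_nX\|_{W^{1,\infty}}\|J_nX\|_{H^{\sigma-1}}+\|J_nX\|_{H^{\sigma-1}}\|\partial J_nX\|_{L^\infty}$; the $L^\infty$-boundedness of $J_n$ from Lemma \ref{LJN}(1) converts this to $\|X\|_{W^{1,\infty}}\|X\|_{H^{\sigma-1}}$. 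The second term, after integration by parts, equals $\tfrac12\int \partial(J_nX)\,(D^{\sigma-1}J_nX)^2\,dx$, bounded by $\|X\|_{W^{1,\infty}}\|X\|_{H^{\sigma-1}}^2$. Multiplying by $\|X\|_{H^{\sigma-1}}$ and combining gives the claim. For $\langle J_ng^{\rm ch}(X),J_nX\rangle_{H^{\sigma-1}}$ the same strategy works once one uses Lemma \ref{LJN}(3) to move a $J_n$ past the multiplier $X\mathbf I$ at cost $\|X\|_{W^{1,\infty}}\|X\|_{H^{\sigma-1}}$.

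For the asymptotic quasi-monotonicity I would use the same three-piece decomposition as in Lemma \ref{g-MHD lemma},
\[
g^{\rm ch}_n(X)-g^{\rm ch}_l(Y)=(J_n-J_l)g^{\rm ch}(J_nX)+J_l\bigl[g^{\rm ch}(J_nX)-g^{\rm ch}(J_lX)\bigr]+J_l\bigl[g^{\rm ch}(J_lX)-g^{\rm ch}(J_lY)\bigr]=:Q_1+Q_2+Q_3.
\]
For $Q_1$, the rate $\|J_n-J_l\|_{\mathcal L(H^{\sigma-1};H^\zeta)}\lesssim(n\wedge l)^{-(\sigma-1-\zeta)}$ from Lemma \ref{LJN}(1), combined with the first bound of the lemma, yields a factor $\lambda_{n,l}^{(1)}:=(n\wedge l)^{-(\sigma-1-\zeta)}$ times $\|X\|_{H^\sigma}^2\|X-Y\|_{H^\zeta}$, absorbed into $\lambda_{n,l}(1+\|X\|_{H^\sigma}^4)+\|X-Y\|_{H^\zeta}^2$ by Young. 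For $Q_2$, write $g^{\rm ch}(J_nX)-g^{\rm ch}(J_lX)=-\tfrac12\partial\bigl[(J_n+J_l)X\cdot(J_n-J_l)X\bigr]$ and use Moser plus Lemma \ref{LJN}(1) to extract a $(J_n-J_l)$ decay factor. For the Lipschitz-in-$X$ piece $Q_3$ I would reduce to controlling $\langle J_lX\cdot\partial W,W\rangle_{H^\zeta}$ with $W:=J_l(X-Y)$; here Lemma \ref{LOP5} (applied with $\Q=D^\zeta$, exponents $s=\zeta$, $q=0$, admissible since $\sigma>\tfrac12+1$ in $d=1$) bounds the commutator by $\|X\|_{H^\sigma}\|W\|_{H^\zeta}$, while the residual transport piece becomes $-\tfrac12\int\partial(J_lX)(D^\zeta W)^2dx\lesssim\|X\|_{H^\sigma}\|W\|_{H^\zeta}^2$ after integration by parts (using $H^\sigma\hookrightarrow W^{1,\infty}$).

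The main technical obstacle is the $Q_3$ estimate, where one must carry a coefficient $J_lX$ of full regularity $H^\sigma$ through a low-regularity $H^\zeta$ inner product of $J_l(X-Y)$ with its derivative. The key is to not lose a derivative on $X-Y$: Lemma \ref{LOP5}, rather than Lemma \ref{LOP4}, is the right tool because it absorbs all loss of derivative onto the smoother coefficient $X$. Once this is done, collecting the three pieces and defining $\lambda_{n,l}:=(n\wedge l)^{-2(\sigma-1-\zeta)}$ (which tends to $0$ as $n,l\to\infty$ since $\zeta<\sigma-1$) delivers the stated inequality.
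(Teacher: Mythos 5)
Your proof is correct and follows exactly the route the paper intends: the published proof of this lemma is literally ``as in the proof of Lemma \ref{g-MHD lemma}, so we skip the details,'' and your argument is a faithful scalar adaptation of that proof (Moser estimate via $-\tfrac12\partial(X^2)$, commutator splitting with Lemmas \ref{LOP4}/\ref{LOP5} plus Lemma \ref{LJN}(3), and the same $Q_1+Q_2+Q_3$ decomposition with mollifier decay rates supplying $\lambda_{n,l}$). The parameter checks ($s=\zeta$, $q=0\le\sigma-\zeta$, $\sigma>\tfrac32$ in $d=1$) are all valid, so you have in effect supplied the details the paper omits.
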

\begin{proof}
One can verify the desired estimates
as in the proof of Lemma \ref{g-MHD lemma}, so we skip the 
details.
\end{proof}

Since for the present model we have $\tt\Pi=\I$ so that \ref{Ak-assum-Pi} is trivial, in the following theorem we only assume {\rm \ref{Ak-assum}}  without this condition. 

\begin{Theorem}\label{SCH-T} Let $\mathbb K=\R$ or $\T$. Let $d=m=1$.
Assume {\rm \ref{Ak-assum}} without \ref{Ak-assum-Pi},  and  assume  {\rm \ref{Assum-h-s}} for some $l\ge1$.
For any  $s>\frac{1}{2}+l+\max\{1,2r_0\}$, where $r_0$ is given in \eqref{tt Pi U}, and any $\F_0$-measurable $H^s(\mathbb K;\R)$-valued random variable $X(0)$,  the following assertions hold. 
\begin{enumerate}[label={ $(\arabic*)$}]
\item \eqref{SCH}  has  a unique maximal solution $(X,\tau^*)$  in the sense of Definition \ref{solution definition} for 
$\H=\H^s:= H^s(\mathbb K;\R)$ and $\M=\M^\theta:=  H^{\theta}(\mathbb K;\R)$ with $\theta\in \left(\frac 1 2+l, s- \max\{1,2r_0\}\right).$  
Moreover, $(X,\tau^*)$ defines a map $\H^s\ni X(0)\mapsto X(t)\in C([0,\tau^*);\H^s)$ $\p$-{\rm a.s.},
where $\tau^*$ is independent of $s$, and 
\begin{equation*}
\limsup_{t\rightarrow \tau^*}\|X(t)\|_{W^{l,\infty}}=\infty \ \text{on}\ \{\tau^*<\infty\}.
\end{equation*}

\item $\p(\tau^*=\infty)=1$ provided that 
\begin{equation*} 
\limsup_{\|X\|_{H^{\theta}} \to\infty}\frac{\Psi(T,X,\theta)}{\psi(\|X\|_{W^{1,\infty}})\|X\|^2_{H^{\theta}}} <-1,
\end{equation*}
where $\Psi(T,X,\theta)$ and $\psi(\cdot)$ are given in \eqref{NE h-k}  and \eqref{CH-g-growth}, respectively.
\end{enumerate}
\end{Theorem}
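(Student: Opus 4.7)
\medskip

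\noindent\emph{Proof proposal for Theorem \ref{SCH-T}.} The plan is to realize \eqref{SCH} as an instance of \eqref{EN} and invoke Theorem \ref{T3.1} with the parameter choices $d=m=1$, $\tt\Pi=\I$, $\mathcal E\equiv 0$, $\tt b=b^{\rm ch}$, $\tt g=g^{\rm ch}$. Since $\tt\Pi=\I$, condition \ref{Ak-assum-Pi} is automatic, so the hypothesis \ref{Ak-assum} (without \ref{Ak-assum-Pi}) suffices. Assumption \ref{Assum-E} holds trivially with $p_0=0$ and $\mathcal G=0$.

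The verification of \ref{Assum-hbg} decomposes into three independent checks. For the noise coefficients, \ref{Assum-h-s} with the chosen $l$ directly implies \ref{Assum-h} with $\theta_1\in(\tfrac12+l,\infty)$ and any admissible $s_0$. For the regular drift, the estimates \eqref{b-CH estimates} give \ref{Assum-b} on $H^{s_0}$ for $s_0>3/2$, with the growth factor $\tt K(\|X\|_{H^{\theta_2}})$ upgraded to $\psi(\|X\|_{W^{1,\infty}})$ via the embedding $H^{\theta_2}\hookrightarrow W^{1,\infty}$ valid for $\theta_2>\tfrac12+l\ge \tfrac32$. For the singular drift, Lemma \ref{g-ch lemma} supplies precisely the four estimates that constitute \ref{Assum-g} with $q_0=1$, $\theta_0=\zeta$ chosen in $(\tfrac12+l,\,s_0-1)$; the Lipschitz-in-$H^{s_0}$ bound needed for the regularized map $X\mapsto J_n g^{\rm ch}(J_nX)$ follows from $H^{s_0}\hookrightarrow W^{1,\infty}$ combined with Lemma \ref{LJN} and Lemma \ref{LOPN}, while the asymptotic quasi-monotonicity in $H^{\zeta}$ is exactly the last assertion of Lemma \ref{g-ch lemma}.

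With all hypotheses of Theorem \ref{T3.1} in place, choosing
$$s_0=s>\tfrac{1}{2}+l+\max\{1,2r_0\}=\theta_0+\max\{q_0,2r_0,2p_0\}$$
yields part (1) of Theorem \ref{SCH-T}: existence, uniqueness, and continuity in $\H^s$ for the initial condition in $\H^s$. The fact that $\tau^*$ does not depend on $s$ is a consequence of Remark \ref{T3.1-Remark}(2) and the uniqueness in $\M^\theta$. To sharpen the blow-up criterion from $\limsup_{t\to\tau^*}\|X(t)\|_{H^{\theta}}=\infty$ to $\limsup_{t\to\tau^*}\|X(t)\|_{W^{l,\infty}}=\infty$, I apply Theorem \ref{T3.1} \ref{T3.1-b} with the subadditive functional $\B(X)=\|X\|_{W^{l,\infty}}$; the required replacement of $\tt K(\|X\|_{H^{\theta_2}})$ by $\tt K(\B(X))=\psi(\|X\|_{W^{l,\infty}})$ is precisely the form of \eqref{b-CH estimates} and of the estimates in Lemma \ref{g-ch lemma}, and $\B(X)\lesssim \|X\|_{H^\theta}$ holds by Sobolev embedding.

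For part (2), the non-explosion follows from Theorem \ref{T3.1} \ref{T3.1-c} once one observes that the additional contribution to $\mathfrak J(t,X)$ from $\tt b=b^{\rm ch}$ is controlled by $\psi(\|X\|_{W^{1,\infty}})\|X\|_{H^\theta}^2$ by \eqref{b-CH estimates}, and the $\tt g$-contribution is bounded by $\|X\|_{W^{1,\infty}}\|X\|_{H^\theta}^2$ by Lemma \ref{g-ch lemma}; both are absorbed into $\psi(\|X\|_{W^{1,\infty}})\|X\|_{H^\theta}^2$, giving the stated criterion. The main obstacle in the whole argument is not conceptual but notational: one must keep careful track of the admissible range of $\theta$ (needed both for $H^\theta\hookrightarrow W^{l,\infty}$ and for Theorem \ref{T3.1} to apply with $s_0=s$ and $\theta_0=\theta$) and verify that every growth factor from \eqref{b-CH estimates} and Lemma \ref{g-ch lemma} fits into the $\B$-framework of Theorem \ref{T3.1} \ref{T3.1-b}; no new estimate on $b^{\rm ch}$ or $g^{\rm ch}$ beyond those already stated needs to be proved.
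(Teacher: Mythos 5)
Your proposal is correct and follows essentially the same route as the paper: the paper's own proof simply notes that $\EE\equiv0$, $\tt b=b^{\rm ch}$, invokes \eqref{b-CH estimates} and Lemma \ref{g-ch lemma} to verify \ref{Assum-hbg} with $s_0=s$, $\theta_0=\theta$, and then defers to the argument of Theorem \ref{SMHD-T} (application of Theorem \ref{T3.1} \ref{T3.1-a}--\ref{T3.1-c} with $\B(X)=\|X\|_{W^{l,\infty}}$). Your write-up supplies the parameter bookkeeping the paper omits; the only cosmetic slip is the displayed ``$=\theta_0+\max\{q_0,2r_0,2p_0\}$'', which should be an inequality $s>\theta_0+\max\{q_0,2r_0,2p_0\}$ valid for every admissible $\theta_0=\theta$ in the stated range.
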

\begin{proof}
In this case $\EE\equiv0$ and $\tt b=b^{{\rm ch}}$. With \eqref{b-CH estimates} at hand, one can check \ref{Assum-hbg} holds with $s_0=s$ and $\theta_0=\theta$.
The details are similar to the proof of Theorem \ref{SMHD-T} and we skip them for brevity.
\end{proof}

\subsection{Stochastic \textbf{AD} equation}

Consider the stochastic \textbf{AD} equation embedded in \eqref{EN} with $d\ge2$, $m=1$, $\tt \Pi=\I$, $\tt b\equiv0$, $\EE=\EE^{{\rm ad}}$ and $\tt g=g^{{\rm ad}}$, where $\EE^{{\rm ad}}$ and $g^{{\rm ad}}$ are given in \eqref{AD-E-g}. That is,
\begin{align}\label{SAD}
\d X(t)=\, &\Big\{(\EE^{{\rm ad}} X)(t)+ g^{{\rm ad}}(X(t)) \Big\}\d t \notag\\
&+\sum_{k=1}^\infty \Big\{ (\A_kX)(t) \circ \d W_k(t)
+ \tt h_k (t,X(t))\d \tt W_k(t)\Big\},\ \  t\ge0,
\end{align} 
where
\begin{equation*}
\EE^{{\rm ad}}:=-\nu(-\DD)^{\beta},\ \ g^{{\rm ad}}(X):= - \gamma\nn\cdot\left(X\BB X\right), \ \ \BB= (\BB_i)_{1\le i\le d}:=\big(\pp_i[\Phi\star]\big)_{1\le i\le d}.
\end{equation*} 

Then the working space for \eqref{SAD} is  $\tt H^s=\tt\Pi H(\mathbb K^d;\R)=H^s(\mathbb K^d;\R)$ and we have the following
\begin{Lemma}\label{g-ad lemma}
Let $\Phi$ in \eqref{AD} satisfy that $\Phi\in H^\infty$ such that $(\mathscr F\Phi)(\xi)\in \S_0^{-2}$. Let $g_{n}^{{\rm ad}}(X):= J_ng^{{\rm ad}}(J_n X)$.  Then for any   $\sigma\ge\eta>\frac{d}{2}+1$ and $X\in H^{\si}$,
\begin{equation}
\|g^{{\rm ad}}(X)\|_{H^{\si-1}}+\sup_{n\ge 1} \|g_n^{{\rm ad}}(X)\|_{H^{\si-1}}
\lesssim \left(\|X\|_{W^{1,\infty}}+\|\BB X\|_{W^{1,\infty}}\right)\|X\|_{H^{\si}},\label{g-ad Hs}
\end{equation}
\begin{align} \label{g-ad y} 
&\sup_{n\ge 1}\bigg\{ \Big|\big\< g_{n}^{{\rm ad}}(X),X\big\>_{H^{\si}}\Big|+  
\Big|\big\< J_n g^{{\rm ad}}(X),J_n X\big\>_{H^{\si}}\Big|\bigg\} \notag\\
\lesssim \, &\left(\|X\|_{W^{1,\infty}}+\|\BB X\|_{W^{1,\infty}}\right)\|X\|^2_{H^{\si}},
\end{align}
and
for any $\zeta\in(\frac{1}{2},\si-1)$, 
there is $\lambda:\mathbb N\times\mathbb N\to (0,\infty)$ with $\lambda_{n,l}\to 0$ as $n,l\to\infty$ such that for any $n,l\ge 1$, 
\begin{align}
&\left|\Big\<g_{n}^{{\rm ad}}(X)-g_l^{{\rm ad}}(v),X-Y\Big\>_{H^{\zeta}} \right| \notag\\
\leq \,&\big(1+\|X\|^4_{H^{\si}}+\|Y\|^4_{H^{\si}}\big)
\left(\lambda_{n,l}+\|X-Y\|^2_{H^{\zeta}}\right),\ \  X,\, Y\in H^{\si}. \label{g-ad-m-n x-y}
\end{align}
\end{Lemma}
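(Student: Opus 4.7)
The key structural observation is that since $\hat\Phi\in\S_0^{-2}$, each component of $\BB=(\pp_i[\Phi\star\cdot])_{1\le i\le d}$ belongs to $\OP\S_0^{-1}$; by Lemma \ref{LOP2} the map $\BB:H^s\to H^{s+1}$ is bounded for every $s$ and $\nabla\cdot\BB=\Delta(\Phi\star\cdot)\in\OP\S_0^{0}$ is bounded on every $H^s$. I therefore rewrite
\[
g^{\rm ad}(X)=-\gamma\nabla\cdot(X\BB X)=-\gamma\bigl(\BB X\cdot\nabla X+X\,\nabla\cdot\BB X\bigr)
\]
and treat $g^{\rm ad}$ as a transport-type nonlinearity with ``velocity'' $\BB X$ (which is one derivative smoother than $X$) plus a zeroth-order multiplicative term $X\,\nabla\cdot\BB X$. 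This reduces the analysis to the pattern already used in Lemmas \ref{g-MHD lemma} and \ref{g-ch lemma}.

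The bound \eqref{g-ad Hs} then follows from Lemma \ref{LOPN} applied to each summand together with the operator bounds on $\BB$ and $\nabla\cdot\BB$; the $J_n$-regularised version is automatic from $\|J_n\|_{\LL(H^s;H^s)}=1$ (Lemma \ref{LJN}(1)). For \eqref{g-ad y}, I start with $\<J_n g^{\rm ad}(J_n X),X\>_{H^\si}=\<g^{\rm ad}(X_n),X_n\>_{H^\si}$ where $X_n:=J_n X$, and split the transport piece as
\[
\bigl\<\D^\si(\BB X_n\cdot\nabla X_n),\D^\si X_n\bigr\>_{L^2}
=\bigl\<[\D^\si,\BB X_n\cdot\I]\nabla X_n,\D^\si X_n\bigr\>_{L^2}-\tfrac12\bigl\<\nabla\cdot\BB X_n,|\D^\si X_n|^2\bigr\>_{L^2}.
\]
Lemma \ref{LOP4} with $Q=\D^\si$ (applied componentwise) gives $\|[\D^\si,\BB X_n\cdot\I]\nabla X_n\|_{L^2}\lesssim \|\BB X\|_{\Wlip}\|X\|_{H^\si}+\|X\|_{\Wlip}\|X\|_{H^{\si-1}}$, using $\|\BB X\|_{H^\si}\lesssim\|X\|_{H^{\si-1}}$; the term $X_n\nabla\cdot\BB X_n$ is handled directly with Lemma \ref{LOPN}. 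For the variant $\<J_n g^{\rm ad}(X),J_n X\>_{H^\si}$, the family $\{\D^\si J_n\}_{n\ge1}\subset\OP\S^\si$ is uniformly bounded by Lemma \ref{LOP3}, so Lemma \ref{LOP4} again controls $[\D^\si J_n,\BB X\cdot\I]\nabla X$, and the residual Friedrichs commutator $[J_n,\BB X\cdot\I]\nabla X$ is bounded in $L^2$ uniformly in $n$ by Lemma \ref{LJN}(3).

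For \eqref{g-ad-m-n x-y} I use the three-term splitting employed in Lemma \ref{g-MHD lemma}:
\[
g_n^{\rm ad}(X)-g_l^{\rm ad}(Y)=(J_n-J_l)g^{\rm ad}(J_n X)+J_l\bigl(g^{\rm ad}(J_n X)-g^{\rm ad}(J_l X)\bigr)+J_l\bigl(g^{\rm ad}(J_l X)-g^{\rm ad}(J_l Y)\bigr)=:Q_1+Q_2+Q_3.
\]
Pairing each $Q_i$ with $X-Y$ in $H^\zeta$: the term $Q_1$ yields an $(n\land l)^{-\epsilon}$ factor (for $\epsilon\in(0,\si-1-\zeta)$) through Lemma \ref{LJN}(1) combined with \eqref{g-ad Hs}; $Q_2$ is dispatched via the bilinear identity $g^{\rm ad}(A)-g^{\rm ad}(B)=-\gamma\nabla\cdot\bigl((A-B)\BB A+B\BB(A-B)\bigr)$ with $A-B=(J_n-J_l)X$, whose $H^\zeta$-norm tends to zero as $n,l\to\infty$; $Q_3$ contributes a genuine Lipschitz bound obtained by replaying the transport cancellation of the previous step at the lower regularity $\zeta>d/2$, where $H^\zeta\hookrightarrow L^\infty$ is available. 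Young's inequality then produces \eqref{g-ad-m-n x-y} with a suitable $\lambda_{n,l}\to 0$.

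The main technical obstacle is \eqref{g-ad y}: the commutators must be orchestrated so that every top-order derivative of $X$ is absorbed into an $L^\infty$-norm of $\BB X$ (or of $X$ itself), never doubled as $\|X\|_{H^\si}^2$. The presence of $J_n$ outside the nonlinearity forces the simultaneous use of the uniform $\OP\S^\si$-boundedness of $\{\D^\si J_n\}_{n\ge1}$ (Lemma \ref{LOP3}) and the Friedrichs commutator bound (Lemma \ref{LJN}(3)) to keep every constant independent of $n$; once this bookkeeping is settled, the remaining bounds \eqref{g-ad Hs} and \eqref{g-ad-m-n x-y} are routine adaptations of Lemmas \ref{g-MHD lemma} and \ref{g-ch lemma}.
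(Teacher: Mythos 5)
Your proposal is correct and follows essentially the same route as the paper: the same rewriting $g^{\rm ad}(X)=-\gamma(\BB X\cdot\nabla X+X\,\nabla\cdot\BB X)$ with $\BB_i\in\OP\S_0^{-1}$ via Lemma \ref{LOP2}, the same commutator/integration-by-parts cancellation for \eqref{g-ad y} using Lemmas \ref{LOP4} and \ref{LJN}(3), and the same $Q_1+Q_2+Q_3$ splitting (with an intermediate Lipschitz estimate in the spirit of Lemma \ref{LOP5}) for \eqref{g-ad-m-n x-y}, exactly as in the paper's reduction to the pattern of Lemma \ref{g-MHD lemma}.
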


\begin{proof}   By Lemma \ref{LOP2}, $\BB_i=\pp_i[\Phi\star]\in\OP\S^{-1}_0$, $1\le i\le d$ provided $(\mathscr F\Phi)(\xi)\in \S_0^{-2}$.  Then, \eqref{g-ad Hs} comes from Lemma \ref{LOPN} and the fact $H^{\eta-1}\hookrightarrow L^{\infty}$.

Concerning \eqref{g-ad y}, we only prove the upper bound estimate for $\left\< J_n g^{{\rm ad}}(X),J_n X\right\>_{H^{\si}}$, since the estimate for $\big\< g_{n}^{{\rm ad}}(X),X\big\>_{H^{\si}}$ can be derived similarly. Recalling that
$H^{s} \hookrightarrow W^{1,\infty}$ for $s>1+\frac d 2$, 
by Lemmas \ref{LOPN}, \ref{LOP2}, \ref{LOP4}, \ref{LJN}, and integrating by parts, we obtain 
\begin{align*}
&\left|\Big\<\D^{\si} J_n g^{{\rm ad}}(X),\D^{\si} J_n X\Big\>_{L^2}\right|\nonumber\\
\lesssim 
&\left|\big\<\left[\D^{\si},(\BB X\cdot\nn)\right]X,\D^{\si}J^2_n X\big\>_{L^2}\right|
+
\left|\big\<[J_n,(\BB X\cdot\nabla)]\D^{\si}X, \D^{\si}J_n X\big\>_{L^2}\right|\\
&+\left|\big\<\BB X\cdot\nabla\D^{\si}J_n X, \D^{\si}J_n X\big\>_{L^2}\right|
+ \left|\big\<\D^{\si}J_n \left[X\nn\cdot \BB X\right], \D^{\si}J_n X\big\>_{L^2} \right|\\
\lesssim\, & \|\BB X\|_{H^{\si}}\|\nabla X\|_{L^\infty}\| X\|_{H^{\si}}
+\|\BB X\|_{W^{1,\infty}}\|X\|^2_{H^{\si}}
+\|X\nn\cdot \BB X\|_{H^{\si}}\|X\|_{H^{\si}}\nonumber\\
\lesssim\, & \left(\|X\|_{W^{1,\infty}}+\|\BB X\|_{W^{1,\infty}}\right)
\|X\|^2_{H^{\si}}\notag\\
&+\left(\|X\|_{L^\infty}\|\nn\cdot \BB X\|_{H^{\si}}
+\|X\|_{H^{\si}}\|\nn\cdot \BB X\|_{L^\infty}\right)\|X\|_{H^{\si}}\nonumber\\
\lesssim\, & \left(\|X\|_{W^{1,\infty}}+\|\BB X\|_{W^{1,\infty}}\right)
\|X\|^2_{H^{\si}}\lesssim \|X\|_{H^{\eta}}\|X\|^2_{H^{\si}},
\end{align*} 
so that the desired estimate holds. 

Let $Z=X-Y$. By $H^{\si}\hookrightarrow W^{1,\infty}$, integration by parts, Lemmas \ref{LOP2} and \ref{LOP5}, we derive
\begin{align*}
&\big\<g^{{\rm ad}}(X)-g^{{\rm ad}}(Y),X-Y\big\>_{H^{\si-1}} \nonumber\\
\lesssim \,&\left|\big\<\nn\cdot (X\BB Z),Z\big\>_{H^{\si-1}} \right|
+\left|\big\<\nn\cdot (Z \BB Y),Z\big\>_{H^{\si-1}} \right|\nonumber\\
\lesssim\,&\left|\big\<\nn\cdot (X\BB Z), Z\big\>_{H^{\si-1}} \right|
+\left|\big\<\BB Y\cdot \nn)Z,Z\big\>_{H^{\si-1}} \right|
+\left|\big\<Z \nn\cdot\BB Y,Z\big\>_{H^{\si-1}} \right|\nonumber\\
\lesssim\,&\left|\big\<\nn\cdot(X\BB Z), Z\big\>_{H^{\si-1}}\right|
+\left| \big\<[\D^{\si-1},(\BB Y\cdot \nn)]Z, \D^{\si-1} Z\big\>_{L^2}\right|\nonumber\\
&+\left| \big\<\BB Y\cdot \nn \D^{\si-1} Z,\D^{\si-1} Z\big\>_{L^2}\right|
+\left| \big\<Z\nn\cdot \BB Y, Z\big\>_{H^{\si-1}}\right|\nonumber\\
\lesssim\,&\|X\|_{H^{\si}}\|\BB Z\|_{H^{\si}}\|Z\|_{H^{\si-1}}
+\|\BB Y\|_{H^{\si}}\|Z\|^2_{H^{\si-1}}
+\|\nn\cdot \BB Y\|_{L^{\infty}}\|Z\|^2_{H^{\si-1}}\nonumber\\
\lesssim\,& \left(\|X\|_{H^{\si}}+\|Y\|_{H^{\si}}\right)\|X-Y\|^2_{H^{\si-1}}.
\end{align*} 
With this at hand, as in the proof of \eqref{g-mhd-m-n x-y}, we can verify \eqref{g-ad-m-n x-y} with using Lemmas \ref{LJN} and \ref{LOP2}.
The details are omitted for brevity.
\end{proof}

Recall the dissipation term $\nu\Lambda^{2\beta}$ in \eqref{AD}, and let
$\be_0:= \be\textbf{1}_{\{\nu>0\}}.$ Recall that $r_0$ is given in \eqref{tt Pi U}. As in Theorem \ref{SCH-T}, in the present case condition \ref{Ak-assum-Pi} is trivial.

\begin{Theorem}  \label{SAD-T} Let $d\ge2$, $m=1$ and $\mathbb K=\R$ or $\T$. 
Suppose that $\Phi$ in \eqref{AD} satisfies $\Phi\in H^\infty(\mathbb K^d;\R)$ and $(\mathscr F\Phi)(\xi)\in \S_0^{-2}$. 
Assume  {\rm \ref{Ak-assum}} without \ref{Ak-assum-Pi}, and  assume {\rm \ref{Assum-h-s}} for some $l\ge1$. 
Then the following assertions hold for any $s>  \frac{d}{2}+l +\max\{1,2r_0,2\be_0\}$ and
$\F_0$-measurable $H^s$-valued random variable $X(0)$.
\begin{enumerate}[label={ $(\arabic*)$}]
\item \eqref{SAD} admits a unique maximal solution $(X,\tau^*)$ in the sense of Definition \ref{solution definition} for $\H=\H^s:= H^s(\mathbb K^d;\R)$ and $\M=\M^\theta:=  H^{\theta}(\mathbb K^d;\R)$, where $\theta\in \left(\frac d 2+l, s- \max\{1,2r_0,2\be_0\}\right).$  Furthermore,  $(X,\tau^*)$ induces a map $\H^s\ni X(0)\mapsto X(t)\in C([0,\tau^*);\H^s)$ $\p$-{\rm a.s.},
where $\tau^*$ is independent of $s$, and 
\begin{equation*}
\limsup_{t\rightarrow \tau^*}\|X(t)\|_{W^{l,\infty}}+\|\BB X\|_{W^{1,\infty}}=\infty \ \text{on}\ \{\tau^*<\infty\}.
\end{equation*}

\item The solution is non-explosive, if $\Psi(T,X,\theta)$ given by \eqref{NE h-k} enjoys
\begin{equation*}
\limsup_{\|X\|_{H^{\theta}} \to \infty}
\frac{\Psi(T,X,\theta)}{\left(\|X\|_{W^{1,\infty}}+\|\BB X\|_{W^{1,\infty}}\right)\|X\|_{H^{\theta}}^2} 
<-1,\ \ T\in(0,\infty). 
\end{equation*}
\end{enumerate}
\end{Theorem}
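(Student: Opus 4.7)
My plan is to deduce Theorem \ref{SAD-T} as a direct application of Theorem \ref{T3.1}, by verifying its three hypotheses \ref{Ak-assum}, \ref{Assum-E}, \ref{Assum-hbg} with an appropriate parameter choice and then reading off its three conclusions. The skeleton mirrors the proof of Theorem \ref{SMHD-T}, but with $m=1$ and with the nonlinearity bounds supplied by Lemma \ref{g-ad lemma} in place of Lemma \ref{g-MHD lemma}.

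The first step is the parameter identification. Since $\tt\Pi=\mathbf{I}$, \ref{Ak-assum-Pi} is vacuous and \ref{Ak-assum} is given by hypothesis. For \ref{Assum-E} I take $\G_1 = \sqrt{\nu}\,\Lambda^{\beta}$ when $\beta<1$ and (a suitable choice involving the derivatives $\partial_j$ to stay in $\OP\S_0^{1}$) when $\beta=1$, so that $-\<\EE^{\rm ad} X,X\>_{L^2}=\nu\|\Lambda^\beta X\|_{L^2}^2=\|\G_1X\|_{L^2}^2$ and $p_0=\beta_0$. Condition \ref{Assum-b} is trivial because $\tt b\equiv0$, while \ref{Assum-h} follows at once from \ref{Assum-h-s} combined with the embedding $H^{\theta}\hookrightarrow W^{l,\infty}$ valid for $\theta>d/2+l$. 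The heart of the matter is \ref{Assum-g}: I read it off from Lemma \ref{g-ad lemma} with $\sigma=s$, $q_0=1$ and asymptotic-monotonicity parameter $\zeta=\theta_0=\theta$. The four individual items of \ref{Assum-g} correspond respectively to \eqref{g-ad Hs}, a local Lipschitz estimate on $J_n g^{\mathrm{ad}}(J_n\cdot)$ in $H^{s}$ (which is immediate from Lemmas \ref{LJN}, \ref{LOPN}, \ref{LOP5} and the smoothing $J_n\in\LL(H^{s-1};H^{s})$), \eqref{g-ad y}, and \eqref{g-ad-m-n x-y}. Crucially, the growth factor $\|X\|_{W^{1,\infty}}+\|\BB X\|_{W^{1,\infty}}$ produced by Lemma \ref{g-ad lemma} can be majorised by $\tt K(\|X\|_{H^\theta})$, because the hypothesis $(\mathscr F\Phi)(\xi)\in\S_0^{-2}$ together with Lemmas \ref{LOP2}--\ref{LOP3} gives $\BB_i=\partial_i[\Phi\star]\in\OP\S_0^{-1}$, so $\|\BB X\|_{W^{1,\infty}}\lesssim\|\BB X\|_{H^{d/2+1+\epsilon}}\lesssim\|X\|_{H^{d/2+\epsilon}}\le\|X\|_{H^\theta}$ for small $\epsilon>0$.

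With this parameter choice the Theorem \ref{T3.1} threshold $s_0>\theta_0+\max\{q_0,2r_0,2p_0\}$ becomes $s>\theta+\max\{1,2r_0,2\beta_0\}$, which is achievable by choosing any $\theta\in\big(d/2+l,\,s-\max\{1,2r_0,2\beta_0\}\big)$. Theorem \ref{T3.1}\ref{T3.1-a} then yields a unique maximal solution in $C([0,\tau^*);\H^s)$ together with the blow-up criterion $\limsup_{t\to\tau^*}\|X(t)\|_{H^\theta}=\infty$ on $\{\tau^*<\infty\}$; the independence of $\tau^*$ from $s$ is the content of Remark \ref{T3.1-Remark}(2), since the lifespan depends only on the $\M$-norm.

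For the sharpened blow-up criterion I will apply Theorem \ref{T3.1}\ref{T3.1-b} with the functional
\[
\B(X):=\|X\|_{W^{l,\infty}}+\|\BB X\|_{W^{1,\infty}},
\]
which is subadditive (a sum of two seminorms composed with linear operators) and satisfies $\B(X)\lesssim\|X\|_{H^\theta}$ by the embedding discussion above. The growth factors of Lemma \ref{g-ad lemma} are already written in terms of $\B$, so the strengthened form of \ref{Assum-g} required by \ref{T3.1-b} holds verbatim. Finally, the non-explosion assertion is a direct invocation of Theorem \ref{T3.1}\ref{T3.1-c} with the same $\B$, setting $\tt K(\B(X))=C(\|X\|_{W^{1,\infty}}+\|\BB X\|_{W^{1,\infty}})$. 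I anticipate no serious obstacle: all the hard analysis is already packaged in Lemma \ref{g-ad lemma}, and the remaining work is bookkeeping; the only mildly delicate point is verifying that $\BB$ is order $-1$, which is exactly what the hypothesis on $\Phi$ ensures.
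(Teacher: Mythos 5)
Your proposal is correct and follows essentially the same route as the paper: the paper's proof of Theorem \ref{SAD-T} is precisely a transcription of the Theorem \ref{SMHD-T} argument with Lemma \ref{g-ad lemma} supplying the bounds for $g^{\rm ad}$, with $\EE=\EE^{\rm ad}$ verifying \ref{Assum-E}, and with $\B(X)=\|X\|_{W^{l,\infty}}+\|\BB X\|_{W^{1,\infty}}$ in Theorem \ref{T3.1}\ref{T3.1-b} and \ref{T3.1-c}. Your parameter bookkeeping ($q_0=1$, $p_0=\beta_0$, $\theta_0=\theta$, and the control $\|\BB X\|_{W^{1,\infty}}\lesssim\|X\|_{H^\theta}$ via $\BB_i\in\OP\S_0^{-1}$) matches what the paper leaves implicit.
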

\begin{proof}
In the same way as we prove Theorem \ref{SMHD-T}, with  Lemma \ref{g-ad lemma} at hand, we can verify \ref{Assum-E} and \ref{Assum-hbg}. We only remark that in this case, we take $\B(X)=\|X\|_{W^{l,\infty}}+\|\BB X\|_{W^{1,\infty}}$ and $\EE=\EE^{{\rm ad}}=\nu\Lambda^{2\beta}$.
\end{proof}

\subsection{Stochastic \textbf{SQG} equation}

Let $d=2,m=1$ and $s>2$. Recall the operator $\Pi_0$ defined by \eqref{Pi-0 define}. 
As before, 
\begin{equation*}
\tt H^s:=\tt \Pi H^s(\mathbb K^2;\R),\ \ \tt\Pi=\I \ \ \text{if}\ \   \mathbb K=\R\ \ \text{and}\ \ \tt\Pi=\Pi_0\ \ \text{if}\ \ \mathbb K=\T.
\end{equation*}
When $\mathbb K=\T$, we recall \eqref{Pi-0 Hs} and notice that
\begin{equation}\label{Ds=Ls Pi0}
\left\<\D^s\Pi_0X,\D^s\Pi_0Y\right\>_{L^s}\simeq
\left\<\Lambda^{s}\Pi_0X,\Lambda^{s}\Pi_0Y\right\>_{L^s},\ \  X,\, Y\in \tt H^s(\T^2,\R)= H_0^s(\T^2,\R),
\end{equation}
where $\D^s$ and $\Lambda^{s}$ are defined in \eqref{Ds Lambda s define}. 
By
\eqref{tt X Hs} and \eqref{Ds=Ls Pi0}, we have 
$$\left\<X, Y\right\>_{H_0^s}=\left\<X,Y\right\>_{H^s}\simeq
\left\<\Lambda^{s} X,\Lambda^{s} Y\right\>_{L^s},\ \ X,\, Y\in H_0^s(\T^2,\R),$$
and recall \eqref{SQG-g}, i.e.,
$g^{{\rm sqg}}(X): = -(\mathcal R^\perp X)\cdot \nn X.$
Then we
consider the following stochastic \textbf{SQG} equation with $t\ge0$:
\begin{align}
\d X(t)=\,&  g^{{\rm sqg}}(X(t))\d t +\sum_{k=1}^\infty \Big\{ (\tt\Pi\A_kX)(t) \circ \d W_k(t)
+\tt\Pi\tt h_k (t,X(t))\d \tt W_k(t)\Big\}.\label{SSQG}
\end{align}

\begin{Lemma}\label{g-sqg lemma}
Let $ g_n^{{\rm sqg}}(X):= J_n g^{{\rm sqg}}(J_nX)$.
Then for any $\si\ge\eta>2$ and $X\in \tt H^{\si}$, $g^{{\rm sqg}}(X)\in \tt H^{\si-1}$ and
\begin{equation}
\|g^{{\rm sqg}}(X)\|_{H^{\si-1}}+\sup_{n\ge 1} \|g_n^{{\rm sqg}}(X)\|_{H^{\si-1}}
\lesssim \left(\|X\|_{W^{1,\infty}}+\|{\mathcal R} X\|_{W^{1,\infty}}\right)\|X\|_{H^{\si}},\label{g-sqg Hs}
\end{equation}
\begin{align}
\sup_{n\ge 1}\bigg\{ \Big|\big\< g_{n}^{{\rm sqg}}(X),X\big\>_{H^{\si}}\Big|+  &
\Big|\big\< J_n g^{{\rm sqg}}(X),J_n X\big\>_{H^{\si}}\Big|\bigg\}\notag\\
\lesssim\, &\left(\|X\|_{W^{1,\infty}}+\|{\mathcal R} X\|_{W^{1,\infty}}\right) \|X\|^2_{H^{\si}}. \label{g-sqg y}
\end{align}
Furthermore, for any $\zeta\in(2,\si-1)$, 
there is $\lambda:\mathbb N\times\mathbb N\to (0,\infty)$ with $\lambda_{n,l}\to 0$ as $n,l\to\infty$ such that for any $n,l\ge 1$, 
\begin{align}
&\left|\Big\<g_{n}^{{\rm sqg}}(X)-g_l^{{\rm sqg}}(v),X-Y\Big\>_{H^{\zeta}} \right|\nonumber\\
\leq\, &\big(1+\|X\|^4_{H^{\si}}+\|Y\|^4_{H^{\si}}\big)
\left(\lambda_{n,l}+\|X-Y\|^2_{H^{\zeta}}\right),\ \  X,\, Y\in \tt H^{\si}. \label{g-sqg x-y}
\end{align}
\end{Lemma}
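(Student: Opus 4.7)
The strategy parallels the blueprints of Lemmas \ref{g-MHD lemma} and \ref{g-ad lemma}, built on two algebraic features of the Riesz transform: by Lemma \ref{LOP2} its components $\mathcal{R}_j=\pp_j\Lambda^{-1}$ lie in $\OP\S_0^0$, so $\mathcal{R}^\perp$ preserves every $H^s$; and $\mathcal{R}^\perp X$ is divergence-free, since $\nn\cdot(\mathcal{R}^\perp X)=-\pp_1\mathcal{R}_2X+\pp_2\mathcal{R}_1X=(-\pp_1\pp_2+\pp_2\pp_1)\Lambda^{-1}X=0$. Consequently $g^{\rm sqg}(X)=-(u\cdot\nn)X$ with $u=\mathcal{R}^\perp X$ divergence-free and of the same regularity as $X$, which fits exactly the transport-nonlinearity template handled previously.

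For \eqref{g-sqg Hs} I would invoke the Moser-type product estimate of Lemma \ref{LOPN} on $(\mathcal{R}^\perp X\cdot\nn)X$, placing one factor in $L^\infty$ (using $H^{\eta-1}\hookrightarrow L^\infty$ since $\eta>2$) and the other in $H^{\sigma-1}$; Lemma \ref{LJN} then lifts the bound uniformly in $n$ to $g_n^{\rm sqg}$. For \eqref{g-sqg y} the mechanism is a cancellation after applying $\D^\sigma J_n$: writing
\begin{align*}
\<\D^\sigma J_n g^{\rm sqg}(X),\D^\sigma J_n X\>_{L^2}
=\,&-\<[\D^\sigma,\mathcal{R}^\perp X\cdot\nn]X,\D^\sigma J_n^2 X\>_{L^2}\\
&-\<[J_n,\mathcal{R}^\perp X\cdot\nn]\D^\sigma X,\D^\sigma J_n X\>_{L^2}\\
&-\<(\mathcal{R}^\perp X\cdot\nn)\D^\sigma J_n X,\D^\sigma J_n X\>_{L^2},
\end{align*}
the last term vanishes by integration by parts together with $\nn\cdot\mathcal{R}^\perp X=0$; the first commutator is controlled by Lemma \ref{LOP4} (with $\Q=\D^\sigma$) by $\|\mathcal{R}^\perp X\|_{W^{1,\infty}}\|X\|_{H^\sigma}^2$; and the mollifier commutator is handled by Lemma \ref{LJN}. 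The bound on $\<g_n^{\rm sqg}(X),X\>_{H^\sigma}$ is entirely analogous, using $[J_n,\D^\sigma]=0$ to move one mollifier to the other side.

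For \eqref{g-sqg x-y} I would first establish the single-index companion
\begin{equation*}
\big|\<g^{\rm sqg}(X)-g^{\rm sqg}(Y),X-Y\>_{H^{\sigma-1}}\big|\lesssim(\|X\|_{H^\sigma}+\|Y\|_{H^\sigma})\|X-Y\|_{H^{\sigma-1}}^2
\end{equation*}
by splitting $g^{\rm sqg}(X)-g^{\rm sqg}(Y)=-(\mathcal{R}^\perp Z\cdot\nn)X-(\mathcal{R}^\perp Y\cdot\nn)Z$ with $Z=X-Y$, applying Lemma \ref{LOP5} to $[\D^{\sigma-1},(\mathcal{R}^\perp Y)_j\I]\pp_j Z$ with $q=0$, and using $\nn\cdot\mathcal{R}^\perp Y=0$ for the remaining symmetric transport term. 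The standard three-term decomposition
\begin{align*}
&J_n g^{\rm sqg}(J_n X)-J_l g^{\rm sqg}(J_l Y)\\
=\,&(J_n-J_l)g^{\rm sqg}(J_n X)+J_l[g^{\rm sqg}(J_n X)-g^{\rm sqg}(J_l X)]\\
&+J_l[g^{\rm sqg}(J_l X)-g^{\rm sqg}(J_l Y)]
\end{align*}
then handles the mollifier dependence: the first piece produces the vanishing factor via $\|J_n-J_l\|_{\LL(H^\sigma;H^\zeta)}\lesssim(n\land l)^{-(\sigma-\zeta-\epsilon)}$ from Lemma \ref{LJN}, the middle piece is dealt with by a further splitting exactly as in the proof of \eqref{g-mhd-m-n x-y}, and the third piece uses the single-index bound just proven.

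The main anticipated obstacle is bookkeeping rather than genuine analysis: because $\mathcal{R}^\perp$ is of order zero, no derivative can be borrowed from $\mathcal{R}^\perp X$ (in contrast to the aggregation case, where $\mathcal{B}\in\OP\S_0^{-1}$ supplies one free derivative). One must therefore work at the critical threshold $\sigma>2$ (which guarantees $H^{\sigma-1}\hookrightarrow W^{1,\infty}$) and invoke Lemmas \ref{LOP4}--\ref{LOP5} at their sharp exponents throughout; no essentially new analytic tool beyond those already employed for the MHD and aggregation-diffusion models should be required.
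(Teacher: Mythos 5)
Your proposal follows essentially the same route as the paper: Lemma \ref{LOPN} plus the $L^2$-boundedness of $\mathcal R$ for \eqref{g-sqg Hs}, the commutator decomposition $[\,\cdot\,,\mathcal R^\perp X\cdot\nn]$ with the divergence-free cancellation and Lemmas \ref{LOP4}, \ref{LJN} for \eqref{g-sqg y}, and the three-term mollifier splitting as in \eqref{g-mhd-m-n x-y} for \eqref{g-sqg x-y}. The only cosmetic differences are that the paper works with $\Lambda^{\si}$ on $\T^2$ via the equivalence \eqref{Ds=Ls Pi0} and explicitly records that $g^{{\rm sqg}}(X)$ has zero average so it lands in $\tt H^{\si-1}$; your argument is otherwise the paper's.
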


\begin{proof}  By
Lemma \ref{LOPN}, the boundedness of $\mathcal{R}$ in $L^2$, $[J_n,\mathcal R^\perp]=0$ and the fact $g^{{\rm sqg}}(X),g_n^{{\rm sqg}}(X)$ have zero average on $\mathbb K=\T$, we obtain
\eqref{g-sqg Hs}.

By Lemma \ref{LOP4} with $s>\si>2$ and the boundedness of $\mathcal{R}$ in $L^2$, we see that for any $\si>2$,
\begin{align*}\big|\big\< g_n^{{\rm sqg}}(X), X\big\>_{H^{\si}}\big|=\,&\Big|\Big\<J_n[J_n (\mathcal{R}^\perp X) \cdot\nn J_n X)],X\Big\>_{\tt H^{\si}}\Big| \\
\lesssim\, &\left(\|X\|_{W^{1,\infty}}+\|\mathcal{R}  X\|_{W^{1,\infty}}\right)\|X\|^2_{H^\si},\ \ X\in\tt H^\si,\, n\ge 1.
\end{align*}
Similarly, by Lemmas \ref{LOP4} and \ref{LJN}, integration by parts and $\nn\cdot (\mathcal{R}^\perp X)=0$, we have that for $\mathbb K=\T$,
\begin{align*}
& \big|\big\<J_ng^{{\rm sqg}}(X),J_nX\big\>_{H^\si}\big|\\
\lesssim\, &  \Big|\big\<[\Lambda^{\si},(\mathcal{R} X\cdot\nn)] X), J^2_n\Lambda^{\si}X\big\>_{L^2}\Big|+ \Big|\big\<[J_n,(\mathcal{R}^\perp X\cdot\nn)]\Lambda^{\si}X,J_n\Lambda^{\si}X\big\>_{L^2}\Big|\\
&+\Big|\big\<(\mathcal{R}^\perp X\cdot\nn)J_n\Lambda^{\si}X,J_n\Lambda^{\si}X\big\>_{L^2}\Big|\\
\lesssim \, &
\left(\|X\|_{W^{1,\infty}}+\|\mathcal{R} X\|_{W^{1,\infty}}\right)
\|X\|^2_{H^\si},\ \ \ X\in \tt H^\si,\, n\ge 1.
\end{align*} 
Hence \eqref{g-sqg y} holds true for $\big\<J_ng^{{\rm sqg}}(X),J_nX\big\>_{H^\si}$. The estimate for $\big\< g_{n}^{{\rm sqg}}(X),X\big\>_{H^{\si}}$, and the estimated for the case $\mathbb K=\R$ can be obtained in the same way.
Finally, proceeding as in the proof of \eqref{g-mhd-m-n x-y}, we can verify \eqref{g-sqg x-y}. We omit the details.
\end{proof} 

Since $m=1$ and  $\tt\Pi\in \OP \mathcal S_0^0$, where   $\tt\Pi=\I$ for $\mathbb K=\R$ and $\tt \Pi=\Pi_0$ for $\mathbb K=\T$,
the condition \ref{Ak-assum-Pi} is trivial.  

\begin{Theorem}\label{SSQG-T} Let $d=1$, $m=1$ and $\mathbb K=\R$ or $\T$. Assume {\rm \ref{Ak-assum}} without  \ref{Ak-assum-Pi},  and assume {\rm \ref{Assum-h-s}} for  some $l\ge1$. 
Then for any $s> 1+l +\max\{1,2r_0\}$, where $r_0$ is given in \eqref{tt Pi U},  and any $\F_0$-measurable $\H$-valued random variable $X(0)$, we have the following assertions.
\begin{enumerate}[label={ $(\arabic*)$}]
\item \eqref{SSQG} has a unique maximal solution $(X,\tau^*)$  such that Definition \ref{solution definition} is fulfilled with
\begin{equation*}
\H=\H^s:=
\begin{cases}
H^s(\mathbb R^2; \mathbb R)&\text{if}\ \ \mathbb K=\R,\\
H_0^s(\mathbb T^2; \mathbb R)&\text{if}\ \ \mathbb K=\T,
\end{cases}\ \
\M=\M^\theta:=
\begin{cases}
H^{\theta}(\mathbb R^2; \mathbb R)&\text{if}\ \  \mathbb K=\R,\\
H_0^{\theta}(\mathbb T^2; \mathbb R)&\text{if}\ \ \mathbb K=\T,
\end{cases}
\end{equation*}
where $H_0^{s}(\T^2;\R)$ is defined in \eqref{Pi-0 Hs} and $\theta\in \left(l+\frac{d}{2},s-\max\{1,2r_0\}\right).$    
Besides, $(X,\tau^*)$ defines a map $\H^s\ni X(0)\mapsto X(t)\in C([0,\tau^*);\H^s)$ $\p$-{\rm a.s.},
where $\tau^*$ does not depend on $s$, and 
\begin{equation*}
\limsup_{t\rightarrow \tau^*}\|X(t)\|_{W^{l,\infty}}+\|\mathcal R X\|_{W^{1,\infty}}=\infty \ \text{on}\ \{\tau^*<\infty\}.
\end{equation*}

\item The solution is non-explosive, if $\Psi(T,X,\theta)$ defined in \eqref{NE h-k} satisfies    
\begin{equation*}
\limsup_{\|X\|_{H^{\theta}} \to \infty}
\frac{\Psi(T,X,\theta)}{\left(\|X\|_{W^{1,\infty}}+\|\mathcal R X\|_{W^{1,\infty}}\right)\|X\|_{H^{\theta}}^2} 
<-1,\ \ T\in(0,\infty).
\end{equation*}
\end{enumerate}
\end{Theorem}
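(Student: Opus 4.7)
The plan is to deduce Theorem \ref{SSQG-T} directly from Theorem \ref{T3.1} by matching the stochastic \textbf{SQG} equation \eqref{SSQG} to the abstract framework \eqref{EN} with the choices $d=2$, $m=1$, $\tt b\equiv 0$, $\EE\equiv 0$, $\tt g=g^{{\rm sqg}}$, and $\tt\Pi$ as specified. First I would dispatch the trivial verifications: Assumption \ref{Assum-E} holds with $p_0=0$ and $\G\equiv 0$ because $\EE\equiv 0$; Assumption \ref{Assum-b} is immediate from $\tt b\equiv 0$; and Assumption \ref{Assum-h} follows from the strengthened hypothesis \ref{Assum-h-s} (with $\theta_1$ any fixed number in $(l+\frac{d}{2},\infty)$). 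Assumption \ref{Ak-assum} is part of the hypothesis of the theorem, where \ref{Ak-assum-Pi} is automatic: on $\R^2$ we have $\tt\Pi=\I$, and on $\T^2$ we have $\tt\Pi=\Pi_0\in\OP\S_0^0$ (scalar symbol, $m=1$), so the first alternative of \ref{Ak-assum-Pi} is satisfied in both cases.

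The substantive point is verifying Assumption \ref{Assum-g} for $\tt g=g^{{\rm sqg}}$, and here Lemma \ref{g-sqg lemma} does all the work. With $s_0=s$ and $\theta_0=\theta$ satisfying $\theta > 1+l \ge 2$, the bound \eqref{g-sqg Hs} gives the required control $\|g^{{\rm sqg}}(X)\|_{H^{s-1}}\lesssim \tt K(\|X\|_{H^s})$ and the Lipschitz estimate for $J_n\tt g(J_n\cdot)$; \eqref{g-sqg y} yields both inner-product estimates with $\tt K(\|X\|_{H^{\theta_2}})$ replaced by $C\bigl(\|X\|_{W^{1,\infty}}+\|\mathcal R X\|_{W^{1,\infty}}\bigr)$; and \eqref{g-sqg x-y} gives the asymptotic monotonicity with $\zeta=\theta$ and the required null sequence $\lambda_{n,l}$. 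Taking $q_0=1$, the condition $s_0>\theta_0+\max\{q_0,2r_0,2p_0\}$ in Theorem \ref{T3.1} becomes $s>\theta+\max\{1,2r_0\}$, which combined with $\theta>1+l$ is precisely the hypothesis $s>1+l+\max\{1,2r_0\}$.

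Given these verifications, part (1) of Theorem \ref{SSQG-T} follows from Theorem \ref{T3.1}\ref{T3.1-a}, and the $s$-independence of $\tau^*$ is noted in Remark \ref{T3.1-Remark}. For the refined blow-up criterion I would apply Theorem \ref{T3.1}\ref{T3.1-b} with
\[
\B(X):=\|X\|_{W^{l,\infty}}+\|\mathcal R X\|_{W^{1,\infty}},
\]
which is clearly subadditive, continuous on $H^{\theta}$, and satisfies $\B(X)\lesssim \|X\|_{H^\theta}$ by the Sobolev embeddings $H^\theta\hookrightarrow W^{l,\infty}$ and (using boundedness of $\mathcal R$ on $H^{\theta}$ and $H^\theta\hookrightarrow W^{1,\infty}$) $\|\mathcal R X\|_{W^{1,\infty}}\lesssim \|X\|_{H^\theta}$, valid since $\theta>1+l\ge 2=\tfrac{d}{2}+1$. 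Lemma \ref{g-sqg lemma} was written precisely so that its growth factors depend on $\B(X)$ rather than on any Sobolev norm of $X$, which is exactly what Theorem \ref{T3.1}\ref{T3.1-b} requires. Finally, part (2) is a direct reading of Theorem \ref{T3.1}\ref{T3.1-c}: since $\tt K(\|X\|_{H^{\theta_2}})$ can be replaced by $C\B(X)$, the non-explosion criterion \eqref{NE SPDE} specializes to the condition in the theorem statement.

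I do not anticipate a serious obstacle: the work is essentially bookkeeping with the parameters $(s_0,\theta_0,q_0,p_0,r_0,\theta_1,\theta_2)$ and a correct identification of $\B$. The only point that requires care is checking \ref{Ak-assum-Pi} on the torus — one must verify that $\Pi_0$ genuinely has a scalar symbol in $\S_0^0$ so that the first alternative of \ref{Ak-assum-Pi} applies (rather than \eqref{tt Pi U}, whose hypothesis one would otherwise have to check against each $\A_k$). Since $\Pi_0$ is the Fourier multiplier by $1-\mathbf 1_{\{\xi=0\}}$, this is immediate, and the reduction is complete.
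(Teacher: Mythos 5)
Your proposal is correct and follows essentially the same route as the paper: the paper's proof of Theorem \ref{SSQG-T} is exactly the reduction to Theorem \ref{T3.1} via Lemma \ref{g-sqg lemma}, carried out "analogously to Theorem \ref{SMHD-T}" with $\B(X)=\|X\|_{W^{l,\infty}}+\|\mathcal R X\|_{W^{1,\infty}}$, and your parameter bookkeeping ($q_0=1$, $p_0=0$, $\theta_0=\theta\in(l+1,s-\max\{1,2r_0\})$) and your observation that \ref{Ak-assum-Pi} is trivial here (with $\tt\Pi=\I$ or $\Pi_0\in\OP\S_0^0$) match the paper's.
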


\begin{proof}
With Lemma \ref{g-sqg lemma}, 
one can prove this theorem in a way analogous to Theorem \ref{SMHD-T} with noticing  that $\B(X)=\|X\|_{W^{l,\infty}}+\|\mathcal R X\|_{W^{1,\infty}}$ in the current situation.
\end{proof}

\subsection{Further examples}\label{Section:2 example}
In the above cases,  we take  $q_0= 1$ and $\theta>\frac{d}{2}+l.$ However, our general  result Theorem \ref{T3.1} (with stronger assumption \ref{Assum-h-s} replacing \ref{Assum-h}) applies to many other models.   Below  we present two  more examples where   $\theta$ and $q_0$ have to be larger.

\textit{Modified Camassa-Holm} \textbf{(MCH)} {\it equation}. Consider the modified CH equation
\begin{equation}\label{GCH k}
\left\{\begin{aligned}
&\frac{\d}{\d t}Y+2(\pp X)Y+X\pp Y:=\,0,\ \ 
X=X(t,x):[0,\infty)\times\mathbb K\to\R,\\
&Y:=\,(\I-\pp^2)^{p}X,\ p\in \N,
\end{aligned}\right.
\end{equation} 
which can be viewed as the Euler-Poincar\'{e} differential system on the Bott-Virasoro group with respect to the $H^k$-metric. For simplicity, we consider $p=2$ and we refer to \cite{Tang-Liu-2015-ZAMP} and the references therein for the background of this equation. Then \eqref{GCH k} can be reformulated as
\begin{equation*}
\frac{\d}{\d t}X=g^{{\rm mch}}(X(t))+b^{{\rm mch}}(X(t)),
\end{equation*}
where $g^{{\rm mch}}(X)=-X\pp X$ and
\begin{equation*} 
b^{{\rm mch}}(X):= -\pp \D^{-4}(X^2)-2\pp \D^{-4}\big[(\pp X)^2\big]+\frac 7 2 \pp\D^{-4} \big[(\pp^2 X)^2\big]+3 \pp \D^{-4}\big(\pp[X\pp^3 X]\big).  
\end{equation*}
Now we consider the stochastic \textbf{MCH} equation:
\begin{align*}
\d X(t)=\,&  \Big(g^{{\rm mch}}(X(t))+b^{{\rm mch}}(X)\Big)\d t\\
&+\sum_{k=1}^\infty \Big\{ (\A_kX)(t) \circ \d W_k(t)
+\tt h_k (t,X(t))\d \tt W_k(t)\Big\},\ \ t\ge0.
\end{align*} 
When $s>7/2$, we have for $X,Y\in H^s$ that, cf. \cite{Tang-Liu-2015-ZAMP},
\begin{align*}
\|b^{{\rm mch}}(X)\|_{H^{s}}
\lesssim \,& \|X\|_{W^{3,\infty}} \|X\|_{H^{s}},\ \ s>7/2,\\
\|b^{{\rm mch}}(X)-b^{{\rm mch}}(Y)\|_{H^{s}}
\lesssim\, &\left(\|X\|_{H^{s}}+\|Y\|_{H^{s}}\right)\|X-Y\|_{H^{s}},\ \ s>7/2.
\end{align*}
The above estimates mean that $b^{{\rm mch}}$ is locally Lipschitz in $H^s$ for  $s>7/2$. Notice that $g^{{\rm mch}}(X(t))=g^{{\rm ch}}(X(t))$. Then
the corresponding results for \eqref{SMHD} can be stated as Theorem \ref{SCH-T} for all
$s> \frac{1}{2}+\max\{3,l\} +\max\{1,2r_0\}$ ($\theta\in (\frac{1}{2}+\max\{3,l\}, s- \max\{1,2r_0\})$ in this case) with blow-up criterion
\begin{equation*}
\limsup_{t\rightarrow \tau^*}\|X(t)\|_{W^{\max\{3,l\},\infty}}=\infty \ \text{on}\ \{\tau^*<\infty\}.
\end{equation*}
Global regularity criterion now reduces to 
\begin{equation*} 
\limsup_{\|X\|_{H^{\theta}} \to\infty}\frac{\Psi(T,X,\theta)}{\|X\|_{W^{3,\infty}}\|X\|^2_{H^{\theta}}} <-1,
\end{equation*}
where $\Psi(T,X,\theta)$ is in \eqref{NE h-k}.
\medskip  

\textit{Korteweg-De Vries} {\bf(KdV)} \textit{equation}: We consider the
following Korteweg-De Vries equation for $X=X(t,x):[0,\infty)\times\R\to\R$:
\begin{equation*}
\frac{\d}{\d t} X(t)= g^{{\rm kdv}}(X(t)),\ \ g^{{\rm kdv}}(X):=-X\pp X-\pp^3 X=0,\ \ 
t\ge 0.
\end{equation*}
This equation was 
introduced by Korteweg-de and Vries \cite{Korteweg-de-Vries-1895-PM} to model the motion of long, unidirectional, weakly nonlinear water waves on a
channel.  Then we consider the stochastic {\bf KdV} equation with $t\ge0$:
\begin{align}
\d X(t)=\,& g^{{\rm kdv}}(X(t)) \d t +\sum_{k=1}^\infty \Big\{ (\A_kX)(t) \circ \d W_k(t)
+\tt h_k (t,X(t))\d \tt W_k(t)\Big\}.\label{SKdV}
\end{align} 
Notice that $g^{{\rm kdv}}(X)=g^{{\rm ch}}(X)-\pp^3_{x}X$ and 
$$\left\<\pp^3f,f\right\>_{H^s}=-\left\<\D^s \pp^2f,\D^s \pp f\right\>_{L^2}=-\frac{1}{2}\int_{\R}\partial (\D^s \pp f(x))^2\d x=0,\ \ f\in H^{s+3}.$$
Therefore, one can also apply Theorem \ref{T3.1} to \eqref{SKdV} and in this case $q_0=3$. The main result on $\mathbb K=\R$ can be stated as in Theorem \ref{SCH-T} for some $s>\frac{1}{2}+l+\max\{3,2r_0\}$ (in this case $\theta\in (\frac 1 2+l, s- \max\{3,2r_0\})$) with the global regularity criterion
\begin{equation*} 
\limsup_{\|X\|_{H^{\theta}} \to\infty}\frac{\Psi(T,X,\theta)}{\|X\|_{W^{1,\infty}}\|X\|^2_{H^{\theta}}} <-1,\ \  \psi(T,X,\theta)\ \text{is given in} \ \eqref{NE h-k}.
\end{equation*}

 \setlength{\bibsep}{0.6ex}


\end{CJK}
\end{document}